\Crefname{condition}{Condition}{Conditions}
\Crefname{question}{Question}{Questions}
\titleformat*{\section}{\large\bfseries}
\titleformat*{\subsection}{\normalsize\bfseries}
\newlength{\VerticalSpaceAfterParagraph}
\titlespacing*{\paragraph}{0pt}{\VerticalSpaceAfterParagraph}{1em}
\setlist
  {
    topsep = 5.0pt plus 2.0pt minus 3.0pt,
    partopsep = 1.5pt plus 1.0pt minus 1.0pt,
    parsep = 2.5pt plus 1.25pt minus 0.5pt,
    itemsep = 0pt plus 1.25pt minus 0.5pt
  }
\theoremstyle{plain}
\newtheorem{theorem}{Theorem}
\newtheorem{proposition}[theorem]{Proposition}
\newtheorem{lemma}[theorem]{Lemma}
\newtheorem{corollary}[theorem]{Corollary}
\newtheorem{question}[theorem]{Question}
\newtheorem{claim}[theorem]{Claim}
\theoremstyle{definition}
\newtheorem{definition}[theorem]{Definition}
\newtheorem{example}[theorem]{Example}
\newtheorem{notation}[theorem]{Notation}
\newtheorem{condition}[theorem]{Condition}
\theoremstyle{remark}
\newtheorem{remark}[theorem]{Remark}
\DeclareRobustCommand\ShowAuthors[2]{%
  \ShowAuthorsSignal.emit({#1},{#2})%
}
\DeclareRobustCommand\ShowAffiliations[1]{%
  \ShowAffiliationsSignal.emit(#1)%
}
\newcommand\Author[1]{
  \pgfoonew \CurrentPerson=new person()
  \CurrentPerson.set author(#1)
}
\newcommand\Email[1]{
  \CurrentPerson.set email(#1)
}
\newcommand\Address[1]{
  \CurrentPerson.set address(#1)
}
\newcommand\FirstPerson{0}
\newcommand\LastPerson{}
      \edef\FirstPerson{\theid}
    \edef\LastPerson{\theid}
    \noindent\begin{minipage}{\linewidth}
      \noindent\begin{tabular}[t]{@{}l}
\quad \textsf{\theemail}\\
\newcommand\blankfootnote[1]
      \renewcommand\thefootnote{}%
\newcommand*\abs[1]{\left\lvert #1 \right\rvert}
\numberwithin{theorem}{section}
\numberwithin{equation}{section}
\crefname{sec}{section}{sections}
\Crefname{sec}{Section}{Sections}
\crefname{app}{appendix}{appendices}
\Crefname{app}{Appendix}{Appendices}
\crefname{eqn}{equation}{equations}
\Crefname{eqn}{Equation}{Equations}
\crefname{ineq}{inequality}{inequalities}
\Crefname{ineq}{Inequality}{Inequalities}
\Crefname{algocf}{Algorithm}{Algorithms}
\def\replace#1#2#3{%
 \def\tmp##1#2{##1#3\tmp}%
   \tmp#1\stopreplace#2\stopreplace}
\def\stopreplace#1\stopreplace{}
\newcommand\smallerspaces[1]{\replace{#1}{,}{,\!}}
\newcommand\bubble[1]{#1\text{-}bubble}
\newcommand\bubbles[1]{#1\text{-}bubbles}
\newcommand\variable[1]{\textnormal{\texttt{#1}}}
\newcommand\upperbound{42}
\newcommand\upperboundminusone{41}
\newcommand\Thanks{This work was supported by SFB 195 No.~286237555 of DFG. The second author is also supported by the Simons Foundation, grant SFI-MPS-T-Institutes-00007697, and the Ministry of Education and Science of the Republic of Bulgaria, grant DO1-239/10.12.2024.}
\title{Very ample line bundles on weighted projective spaces and weighted blowups}
\author{\ShowAuthors{, }{, }}
\date{6th~August 2026}
\newcommand\keywords{Divisors, invertible sheaves, graded algebras, Veronese subalgebras, projectively normal}
\newcommand\subjclass{05E14, 13A02, 13A30, 14A25, 14A15}
\begin{document}

\maketitle

\begin{abstract}
We consider line bundles $\mathcal{O}(kd)$ on weighted projective spaces, where $k$ is an integer and $d$ is the least common multiple of the weights. Such line bundles are ample if and only if $k$ is positive. On the other hand, determining which line bundles are very ample is a delicate problem. We give various sharp criteria for very ampleness. As an example, if the weights are pairwise coprime, then $\mathcal{O}(d)$ is always very ample, which implies that general smooth well-formed weighted hypersurfaces of dimension at least two are simply connected. We also treat weighted blowups, relative very ampleness, projective normality, Rees rings and generation in degree 1 of Veronese subrings.
\blankfootnote{\textup{2020} \textit{Mathematics Subject Classification}. \subjclass{}.}
\blankfootnote{\textit{Keywords}. \keywords{}.}
\blankfootnote{\Thanks{}}
\end{abstract}

\tableofcontents

\section{Introduction} \label[sec]{sec:introduction}

All rings are commutative and unital. Except where otherwise stated, all graded rings are $\mathbb Z_{\geq0}$-graded.
For every positive integer~$r$, the \emph{$r$th Veronese subring} of a graded ring $S$ is the graded ring $S^{(r)} := \bigoplus_{m \in \mathbb Z_{\geq0}} S_{rm}$, where $S_{rm}$ is the $m$th graded component of~$S^{(r)}$.
We say that a graded ring $S$ is generated in degree $1$ if $S = S_0 \oplus \bigoplus_{m \in \mathbb Z_{\geq1}} S_1^m$.

For the rest of the paper, we fix a nonzero ring $Q$.

\subsection{Graded polynomial rings} \label[sec]{sec:intro polynomial}

In this paper, we study graded polynomial rings
\begin{equation} \label{eqn:intro polynomial}
P_{\bm w} := Q[x_1, \ldots, x_n],
\end{equation}
where $n$ is a positive integer and the variables $x_1, \ldots, x_n$ respectively have positive integer degrees $\bm w := (w_1, \ldots, w_n)$.
In this case, the scheme $\operatorname{Proj} P_{\bm w}$ is called a \emph{weighted projective space} over $Q$ and is denoted by $\mathbb P_Q(\bm w)$.
We give precise answers to the following questions:
\begin{itemize}
\item When is $P_{\bm w}^{(r)}$ generated in degree~1?
\item When is $\mathcal O_{\mathbb P_Q(\bm w)}(r)$ very ample over~$\operatorname{Spec}Q$?
\end{itemize}
While the most interesting case is when $Q$ is the field of complex numbers~$\mathbb C$, it is no more difficult to work over an arbitrary ring.

\begin{remark}
Given a homogeneous ideal $I$ of~$P_{\bm w}$, if $P_{\bm w}$ is generated in degree~1, then $P_{\bm w}/I$ is generated in degree~1, and if $\mathcal O_{\mathbb P_Q(\bm w)}(r)$ is very ample, then $\iota^* \mathcal O_{\mathbb P_Q(\bm w)}(r)$ is very ample for every closed immersion $\iota$ into $\operatorname{Proj} P_{\bm w}$. So considering graded polynomial rings instead of arbitrary finitely generated graded rings does not restrict the setting.
\end{remark}

Denote
\[
d_{\bm w} := \operatorname{lcm}(w_1, w_2, \ldots, w_n).
\]
We explain why we can restrict to $r$th Veronese subrings with $r$ a positive multiple of~$d_{\bm w}$.

\begin{definition} \label{def:well-formed}
A vector $\bm w = (w_1, \ldots, w_n)$ of positive integers is \textbf{well-formed} if $n = w_1 = 1$ or $\gcd(\{w_i \mid i \in I\}) = 1$ for every $(n-1)$-element subset $I$ of $\{1, \ldots, n\}$.
\end{definition}

If $\bm w$ is \emph{well-formed}, then $\mathcal O_{\mathbb P_Q(\bm w)}(r)$ is invertible if and only if $d_{\bm w}$ divides~$r$.
Every weighted projective space is isomorphic to a well-formed weighted projective space (\cite[Corollary~1.1.16]{PS26}).
In any case, \cref{rem:may assume divisible by d} shows that assuming divisibility by $d_{\bm w}$ does not restrict the setting.

\begin{remark} \label{rem:may assume divisible by d}
Let $r$ be an integer. If $n \geq 2$, then all of the following hold:
\begin{enumerate}[label=\textup{(\alph*)}, ref=\alph*]
\item the sheaf $\mathcal O_{\mathbb P_Q(\bm w)}(kd_{\bm w})$ is invertible for every integer~$k$,
\item the sheaf $\mathcal O_{\mathbb P_Q(\bm w)}(kd_{\bm w})$ is generated by its global sections if and only if $k \geq 0$,
\item the sheaf $\mathcal O_{\mathbb P_Q(\bm w)}(kd_{\bm w})$ is ample if and only if $k > 0$ (\cite[Proposition~2.3(i)]{Del75}),
\item the smallest integer $m$ such that $\mathcal O_{\mathbb P_Q(\bm w)}(m)$ is an ample invertible sheaf is $m = d_{\bm w}$ (\cite[Proposition~3C.7]{BR86}),
\item if $P_{\bm w}^{(r)}$ is generated in degree~1, then $r$ is positive and $d_{\bm w}$ divides~$r$.
\end{enumerate}
\end{remark}

\begin{remark} \label{rem:generation implies}
Generation in degree 1 has two immediate consequences:
\begin{enumerate}[label=\textup{(\alph*)}, ref=\alph*]
\item By \cref{thm:very ample}, if $P_{\bm w}^{(r)}$ is generated in degree~1, then $\mathcal O_{\mathbb P_Q(\bm w)}(r)$ is a line bundle that is very ample over~$\operatorname{Spec}Q$.
\item \label{itm:normal domain} By \cite[Corollary~2.24 and Theorem~4.40]{BG09}, if $Q$ is a normal domain and $\bm w$ is well-formed, then $P_{\bm w}^{(r)}$ is generated in degree $1$ if and only if $\mathcal O_{\mathbb P_Q(\bm w)}(r)$ is invertible, very ample over $\operatorname{Spec}Q$ and defines a projectively normal embedding.
\end{enumerate}
\end{remark}

In all the well-formed examples found in this paper with $n \geq 2$, if $\mathcal O_{\mathbb P_Q(\bm w)}(r)$ is very ample, then $P_{\bm w}^{(r)}$ is generated in degree~$1$.

\begin{question} \label{que:very ample and generation equivalence - graded}
Let $n \geq 2$ and $k$ be positive integers. If $\mathcal O_{\mathbb P_Q(\bm w)}(kd_{\bm w})$ is very ample, then is $P_{\bm w}^{(kd_{\bm w})}$ generated in degree~1?
\end{question}

\begin{remark} \label{rem:erraneous}
In \cite[Lemma~4B.6]{BR86}, it is erroneously claimed that for every projective algebraic $\Bbbk$-scheme $X$ and ample Cartier divisor $Y$, the conditions of $Y$ being very ample and the $\Bbbk$-algebra $\bigoplus_{m \in \mathbb Z_{\geq0}} H^0(X, \mathcal O_X(mY))$ being generated in degree 1 are equivalent. A counterexample is given by any normal projective variety over an algebraically closed field that is linearly normal but not projectively normal.
\end{remark}

Even though $\mathcal O_{\mathbb P_Q(\bm w)}(d_{\bm w})$ is ample and generated by its global sections, it is important to note that it is not always very ample:

\begin{example}[{\cite[Remark~2.6]{Del75}}] \label{exa:Delorme}
Let $\bm w := (1,6,10,15)$. Then, $P_{\bm w}^{(60)}$ is generated in degree~1, while $P_{\bm w}^{(30)}$ is not.
The reason is that denoting $\bm u := (1, 4, 2, 1)$, the monomial $\bm x^{\bm u} := x_1 x_2^4 x_3^2 x_4$ of weighted-degree $\bm w \cdot \bm u = 60$ is not the product of two monomials of weighted-degree~$30$.
\end{example}

\begin{remark} \label{rem:Delorme}
In \cref{exa:Delorme}, if $Q$ is a normal domain, then $\mathcal O_{\mathbb P_Q(\bm w)}(30)$ is not very ample over $\operatorname{Spec}Q$ since the monomial $\bm x^{\bm u} / x_2^{10}$ does not belong to the coordinate ring of the affine open $D_+(x_2^5)$ of $Y := \operatorname{Proj} (Q \oplus \bigoplus_{m \in \mathbb Z_{\geq0}} P_{30}^m)$ but does belong to its integral closure, where $P := P_{\bm w}$.
If $Q$ is an algebraically closed field, then $\mathcal O_{\mathbb P_Q(\bm w)}(30)$ separates points since $a x_i^{30/w_i} - b x_j^{30/w_j}$ is a global section for all indices $i, j$ and elements $a, b \in Q$ but $\mathcal O_{\mathbb P_Q(\bm w)}(30)$ does not separate tangent vectors since there is a tangent vector $\operatorname{Spec} Q[\varepsilon] \to D_+(x_2)$ of $\mathbb P_Q(\bm w)$ given by $\bm x^{\bm u} / x_2^{10} \mapsto \varepsilon$ and $h / x_2^5 \mapsto 0$ for all $h \in P_{ 30}$.
\end{remark}

The problem of determining the smallest positive integer $r$ such that $P_{\bm w}^{(r)}$ is generated in degree $1$ or $\mathcal O_{\mathbb P_Q(\bm w)}(r)$ is very ample over $\operatorname{Spec} Q$ turns out to be very subtle.
It is surprising that the answers to many basic questions concerning this least integer $r$ were previously unknown.
This paper fills this gap in the literature.

Previously, the state of the art in determining which divisors on weighted projective spaces are very ample was the following:

\begin{proposition}[{\cite[Proposition~2.3]{Del75}, see \cite[Theorem~4B.7]{BR86} for a translation}] \label{thm:delorme}
Denote $\bm w = (w_1, \ldots, w_n)$. If $n = 1$, then define $G(\bm w) := -w_1$, and otherwise define
\[
G(\bm w) := -\sum_{i=1}^n w_i + \frac{1}{n-1} \sum_{v=2}^n \binom{n-2}{v-2}^{-1} \sum_{\substack{J \subseteq \{1, \ldots, n\}\\\abs{J} = v}} m_J,
\]
where for a subset $J$ of $\{1, \ldots, n\}$, we denote $m_J := \operatorname{lcm}\{w_i \mid i \in J\}$.
If $k$ is a positive integer satisfying $kd_{\bm w} > G(\bm w)$, then $P_{\bm w}^{(k d_{\bm w})}$ is generated in degree~1.
\end{proposition}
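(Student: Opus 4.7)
The plan is to reduce the statement to a purely combinatorial claim about monomial decompositions, and then to establish the bound by an averaging argument over subsets of the support. Setting $N := k d_{\bm w}$, the ring $R^{(N)}$ is generated in degree~$1$ as an $R_0$-algebra exactly when every monomial $f_1^{u_1} \cdots f_n^{u_n}$ in the generators, with $\bm w \cdot \bm u = m N$ for some integer $m \geq 2$, can be written as a product of $m$ such monomials each of weighted-degree~$N$. Since an arbitrary element of $R_{mN}$ is an $R_0$-linear combination of such monomials, any relations among the $f_i$ can only enlarge the space of valid degree-$N$ factorisations; so the algebraic statement follows from the combinatorial one. A straightforward induction on $m$ then reduces everything to the claim: every $\bm u \in \mathbb Z_{\geq 0}^n$ with $\bm w \cdot \bm u \geq 2 N$ and $N \mid \bm w \cdot \bm u$ admits a componentwise-smaller vector $\bm v \leq \bm u$ with $\bm w \cdot \bm v = N$.

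To exhibit such a $\bm v$, I would first dispose of the singleton-support case: if $\bm u = u_i \bm e_i$, then $w_i \mid N$ (as $d_{\bm w} \mid N$) and $u_i \geq 2 N / w_i$, so $\bm v := (N/w_i) \bm e_i$ works. Assume therefore that the support $J := \{ i : u_i > 0\}$ has size $v \geq 2$. Since $d_{\bm w} \mid N$, the integer $N$ is a multiple of $m_J := \operatorname{lcm}\{w_i : i \in J\}$, and hence admits many representations $N = \sum_{i \in J} a_i w_i$ with $a_i \in \mathbb Z_{\geq 0}$. The problem becomes: among this family of representations, find at least one satisfying $a_i \leq u_i$ for every~$i$.

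The heart of the argument is an averaging over subsets that matches the precise shape of $G(\bm w)$. For each $J$ of size $v \geq 2$ and each ordered pair $(i, j) \in J^2$ with $i \neq j$, the identity $(m_J/w_i) w_i = (m_J/w_j) w_j$ allows one to trade weight $m_J$ between the $i$-th and $j$-th coordinates while holding the remaining coordinates of $J$ fixed at a canonical minimum. Starting from a base representation of $N$ supported on~$J$ and iterating such trades produces a lattice of representations inside the box $[0, u_i]_i$, failing only when the total weighted-degree $\bm w \cdot \bm u$ is too small to accommodate any of them. Summing the resulting ``deficit'' over all subsets $J$, and using that a fixed pair $\{i, j\}$ lies in exactly $\binom{n-2}{v-2}$ subsets of size~$v$ while each ordered pair has $n-1$ choices of second coordinate, one recovers both the reciprocal binomials $\binom{n-2}{v-2}^{-1}$ and the prefactor $\frac{1}{n-1}$ in the definition of $G(\bm w)$; the baseline correction $-\sum_i w_i$ accounts for having set $a_i \geq 1$ on each coordinate of the support.

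The main obstacle will be making this averaging quantitatively tight, so that the bound matches $G(\bm w)$ on the nose rather than merely up to a constant. This requires careful inclusion--exclusion over the possible supports $J$, identifying a worst-case configuration of $\bm u$ saturating the bound, and ruling out cancellation losses when the same representation is counted by multiple subsets~$J$. By contrast, the reduction to the combinatorial claim and the handling of the singleton-support case are routine; all the arithmetic subtlety is concentrated in the averaging step.
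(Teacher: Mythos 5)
The paper does not prove this proposition: it is stated with citations to \cite[Proposition~2.3]{Del75} and the translation \cite[Theorem~4B.7]{BR86}, and is used only as the state-of-the-art result that the paper goes on to improve. So there is no in-paper proof to compare against, and your proposal must stand on its own merits.

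Your reduction is sound: setting $N := k d_{\bm w}$, it suffices to show that every $\bm u \in \mathbb Z_{\geq0}^n$ with $\bm w \cdot \bm u = mN$, $m \geq 2$, admits $\bm v \preceq \bm u$ with $\bm w \cdot \bm v = N$; this is precisely the content of the paper's \cref{thm:nongeneration implies bubble} in the special case $N = d_{\bm w}$, and your version with $N = k d_{\bm w}$ is obtained the same way. (One small inaccuracy: your first sentence asserts this as an ``exactly when'' biconditional, but for a graded ring with relations the monomial-decomposition condition is sufficient for generation in degree 1, not necessary. Since you only invoke sufficiency, this does not affect the argument.)

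The genuine gap is the step you yourself call ``the main obstacle.'' What you describe as the heart of the argument --- trades via $(m_J/w_i)w_i = (m_J/w_j)w_j$, a ``base representation,'' a ``lattice of representations inside the box,'' a ``deficit'' summed over subsets $J$ --- is never given mathematical content. There is no stated lemma of the form ``if $\bm u$ admits no decomposition of weight $N$, then such-and-such inequality involving $m_J$ holds,'' so there is no inequality to sum over $J$, and the claimed appearance of $\binom{n-2}{v-2}^{-1}$ and $\tfrac{1}{n-1}$ is asserted rather than derived. Most tellingly, nothing in the sketch uses the hypothesis $N > G(\bm w)$; since that hypothesis is exactly what would have to fall out of the would-be averaging inequality, its absence means the argument never closes. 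The concluding paragraph explicitly acknowledges that making the averaging quantitatively tight, identifying the worst-case $\bm u$, and controlling overcounting between overlapping $J$ are all unresolved. Those three items are the entire substance of Delorme's theorem; without them you have a plan, not a proof.
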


In this paper, we improve on it in several ways:
\begin{enumerate}
\item Instead of a one-way implication, we give a combinatorial condition equivalent to generation in degree 1 (see \cref{thm:bubble and generation in degree 1 - graded}) and a combinatorial condition equivalent to very ampleness (see \cref{thm:bubble satisfying condition implies not very ample - graded}).
\item Instead of a criterion which is sometimes inconclusive, we give a combinatorial algorithm that guarantees to determine generation in degree 1 (see \cref{app:algorithm}).
The algorithm is very fast, with an average runtime of 3.305\,\textmu s on a single CPU core for the roughly 2 trillion weight vectors that we checked (see \cref{rem:computational}).
\item The combinatorial criteria that we give are often stronger than \cref{thm:delorme}. For a concrete comparison, looking at strictly increasing vectors $\bm w = (w_1, \ldots, w_n)$ of positive integers at most 15 where $4 \leq n \leq 15$, \cref{cor:bound on u when all entries are positive} alone gives a bound $\bm w \cdot \bm v - d_{\bm w}$ (see \cref{cor:bubble and generation in degree 1 - graded,cor:bubble and generation in degree 1 - Rees} for how this bound is related to generation in degree 1) that is strictly smaller than $G(\bm w)$ from \cref{thm:delorme} roughly 98.9\% of the time. The stronger bound $\bm w \cdot \bm{\gamma} - d_{\bm w}$ from \cref{alg: Bubbles algorithm} is strictly smaller than $G(\bm w)$ roughly 99.997\% of the time.
\item We give several criteria of generation in degree 1 that are easier to use and compute by hand than \cref{thm:delorme}.
\item In contrast to \cref{thm:delorme}, even if some of our criteria are not conclusive, they still restrict the number of cases to check.
\item We describe what happens for uniformly randomly chosen weights (see \cref{thm:probability}).
\item In addition to graded rings and the corresponding weighted projective spaces, we also treat Rees rings and the corresponding weighted blowups.
\end{enumerate}

Similarly to \cite[Proposition~2.5]{Del75}, several statements in this paper also hold when we remove repeated weights or when we make the weight vector well-formed.
We make this precise below.

\begin{definition} \label{def:equivalences}
Consider the following operations on a nonempty vector $\bm w$ of positive integers:
\begin{enumerate}
\item permuting the entries,
\item adding or removing repeated entries,
\item multiplying all entries by the same positive integer or dividing all entries by a positive common divisor,
\item adding or removing entries equal to the least common multiple of the entries while not changing the least common multiple,
\item if the length of $\bm w$ is at least two and $\gcd(\bm w) = 1$, then choosing an index $i$ and either multiplying each entry $w_j$ where $j \neq i$ by the same positive integer coprime to $w_i$ or dividing each entry $w_j$ where $j \neq i$ by a positive common divisor of the entries $w_j$ where $j \neq i$.
\end{enumerate}
If two nonempty vectors of positive integers are equal after applying a suitable choice of any number of any of the operations
\begin{itemize}
\item (1)–(5), then we say they are \textbf{weakly equivalent},
\item (1)–(4), then we say they are \textbf{strongly equivalent}.
\end{itemize}
\end{definition}

Our first result is the following:

\begin{theorem}[{\cref{cor:bubble and generation in degree 1 - graded,thm:coprime weights}, \cref{lem:simplification bubbles}\labelcref{itm:weakly}}] \label{thm:introduction coprime graded}
Let $\bm w$ be weakly equivalent to a vector of pairwise coprime integers.
Then, $P^{(d_{\bm w})}_{\bm w}$ is generated in degree~$1$.
\end{theorem}

By \cref{rem:generation implies}, this gives the immediate corollary:

\begin{corollary} \label{cor:pairwise coprime}
Let $\bm w$ be weakly equivalent to a vector of pairwise coprime integers.
Let $\mathbb P_Q(\bm w)$ be the weighted projective space over~$Q$ with weights~$\bm w$.
Then, the line bundle $\mathcal O_{\mathbb P(\bm w)}(d_{\bm w})$ is very ample over~$\operatorname{Spec} Q$. Moreover, if $Q$ is a normal domain and $\bm w$ is well-formed, then the global sections of $\mathcal O_{\mathbb P(\bm w)}(d_{\bm w})$ define a projectively normal embedding.
\end{corollary}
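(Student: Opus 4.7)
The plan is to reduce the corollary to the special case of a graded polynomial ring via \cref{thm:introduction coprime graded}, and then chain this with the two general implications recorded in \cref{rem:generation implies}. There is essentially no substantive obstacle: the real content is packaged into \cref{thm:introduction coprime graded}, and the rest is just unpacking definitions and invoking previously recorded facts.

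First I would introduce the relevant coordinate ring. Let $R := Q[x_1, \ldots, x_n]$ be the graded polynomial ring over $Q$ in which the variable $x_i$ has degree $w_i$. By construction $\mathbb{P}(\bm{w}) = \operatorname{Proj} R$ and $R_0 = Q$, and $R$ is generated as a $Q$-algebra by the homogeneous elements $x_1, \ldots, x_n$ of degrees $w_1, \ldots, w_n$.

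Next I would apply \cref{thm:introduction coprime graded}. Since $w_1, \ldots, w_n$ are pairwise coprime by hypothesis, the theorem yields that $R^{(d_{\bm{w}})}$ is generated in degree~$1$ as a $Q$-algebra. From this, \cref{rem:generation implies}(a) immediately gives that $\mathcal{O}_{\mathbb{P}(\bm{w})}(d_{\bm{w}})$ is very ample over $\operatorname{Spec} Q$, proving the first assertion. For the projective normality statement, the hypotheses of \cref{rem:generation implies}(b) are satisfied: $R_0 = Q$ is an algebraically closed field, $R$ is a graded polynomial ring, and we have just shown that $R^{(d_{\bm{w}})}$ is generated in degree~$1$, so the global sections of $\mathcal{O}_{\mathbb{P}(\bm{w})}(d_{\bm{w}})$ define a projectively normal embedding, as required.

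The only thing to be careful about is verifying that \cref{thm:introduction coprime graded} truly applies over an arbitrary coefficient ring $Q$ (in particular, not requiring $Q$ to be a field for the first half of the statement); this is automatic since the theorem is stated for graded rings with arbitrary degree-zero part. No other subtlety arises.
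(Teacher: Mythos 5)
Your proof is correct and follows exactly the route the paper intends: the paper presents the corollary as an immediate consequence of \cref{thm:introduction coprime graded} together with \cref{rem:generation implies}, which is precisely the chain you spell out. The only thing you add is the explicit unpacking of the definitions (identifying $\mathbb P(\bm w)$ with $\operatorname{Proj} Q[x_1,\ldots,x_n]$ and checking the hypotheses of the two cited results), which is harmless and indeed what the paper leaves tacit.
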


\begin{remark}
We give two applications of \cref{cor:pairwise coprime}.
\begin{enumerate}[label=\textup{(\alph*)}, ref=\alph*]
\item In the special case of \cref{cor:pairwise coprime} where $n = 3$, we recover the result \cite[Remark~2.6]{Del75} (see \cite[Remark~3 in section 4B]{BR86} for an English translation).
Namely, since every weighted projective plane is isomorphic to a weighted projective plane with pairwise coprime weights, we find that every ample line bundle of the form $\mathcal O(r)$ on a weighted projective plane is very ample and moreover, defines a projectively normal embedding if the base ring is a normal domain and the weights are well-formed.
\item Let $n \geq 4$ be an integer, let $\bm w := (w_1, \ldots, w_n)$ be a vector of pairwise coprime positive integers and let $X$ be a smooth hypersurface of $\mathbb P_{\mathbb C}(\bm w)$ of weighted-degree divisible by~$d_{\bm w}$. Note that by \cite[Proposition~3.8.1]{PS26}, every smooth well-formed weighted hypersurface is of the form above. By \cref{cor:pairwise coprime}, $X$ is a very ample Cartier divisor of $\mathbb P_{\mathbb C}(\bm w)$ and defines a projectively normal embedding.
By \cite{Arm68}, the smooth locus $U$ of $\mathbb P_{\mathbb C}(\bm w)$ is simply connected.
Therefore, by the Lefschetz hyperplane theorem for quasiprojective varieties (\cite[Theorem~1.1.3]{HL85} or \cite[Theorem~9.3.1 in section~9.3.3]{HL20}), the complex manifold corresponding to a general $X$ is simply connected.
\end{enumerate}
\end{remark}

\subsection{Rees rings} \label[sec]{sec:intro Rees}

In this paper, we also study generation in degree~1 and very ampleness for Rees rings and weighted blowups. A \emph{Rees ring} is a graded subring $\bigoplus_{m \in \mathbb Z_{\geq0}} t^m I_m$ of the graded polynomial ring $A[t]$ that is finitely generated over~$A$, where $A$ is a ring, $t^m I_m$ is the $m$th graded component for every nonnegative integer $m$ and $A = I_0 \supseteq I_1 \supseteq I_2 \supseteq \cdots$ is a filtration of ideals of~$A$.

We only consider Rees rings of the form
\begin{equation} \label[eqn]{eqn:Rees ring R}
R_{\bm w} := Q[t^{-1}, t^{w_1} x_1, \ldots, t^{w_n} x_n]_{\geq0},
\end{equation}
where $\bm w := (w_1, \ldots, w_n)$ is a nonempty vector of positive integers, where $t$ denotes the degree and $S_{\geq0}$ denotes the nonnegative part $\bigoplus_{m \in \mathbb Z_{\geq0}} S_m$ of a $\mathbb Z$-graded ring $S = \bigoplus_{m \in \mathbb Z} S_m$.
Equivalently, $R_{\bm w}$ can be given as
\[
R_{\bm w} = Q\mleft[\mleft\{ t^d x_i \;\middle|\; i \in \{1, \ldots, n\},\, d \in \{0, \ldots, w_i\} \mright\}\mright].
\]
Note that the degree zero part is $(R_{\bm w})_0 = Q[x_1, \ldots, x_n]$.

\begin{remark} \label{rem:Rees subschemes}
Given a homogeneous ideal $I$ of~$R_{\bm w}$, if $R_{\bm w}$ is generated in degree~1, then $R_{\bm w}/I$ is generated in degree~1, and if $\mathcal O_{\operatorname{Proj} R_{\bm w}}(r)$ is very ample, then $\iota^* \mathcal O_{\operatorname{Proj} R_{\bm w}}(r)$ is very ample for every closed immersion $\iota$ into $\operatorname{Proj} R_{\bm w}$.
So, considering only Rees rings of the form \eqref{eqn:Rees ring R} does not restrict the setting.
\end{remark}

The natural morphism $\varphi\colon \operatorname{Proj} R_{\bm w} \to \operatorname{Spec} ((R_{\bm w})_0)$ is called the \emph{weighted blowup} of the affine \mbox{$n$-space} $\mathbb A_Q^n := \operatorname{Spec} ((R_{\bm w})_0)$ with weights~$\bm w$.
If $R_{\bm w}^{(r)}$ is generated in degree $1$ for some positive integer~$r$, then $\varphi$ is the blowup along~$I_r$, which is an ideal in~$(R_{\bm w})_0$.
If $Q$ is an integral domain, then $R_{\bm w}$ coincides with the integral closure of $Q[x_1, \ldots, x_n, t^{w_1} x_1, \ldots, t^{w_n} x_n]$ in $Q[t, x_1, \ldots, x_n]$.
If $Q$ is the field of complex numbers~$\mathbb C$, then $\varphi$ coincides with the toric morphism in \cite[Proposition-Definition~10.3]{KM92}.

\begin{example}
The weighted blowup with weights $(2, 3)$ of the affine plane $\mathbb A_Q^2 := \operatorname{Spec} Q[x_1, x_2]$ over $Q$ is the morphism $\varphi\colon \operatorname{Proj} R \to \mathbb A_Q^2$, where the Rees ring $R$ is given by
\[
R := R_{(2,3)} = Q[t^{-1}, t^2 x_1, t^3 x_2]_{\geq0} = Q[x_1, t x_1, t^2 x_1, x_2, t x_2, t^2 x_2, t^3 x_2].
\]
Note that $t^6 x_1^3$, $t^6 x_1^2 x_2$ and $t^6 x_2^2$ are all elements of~$R$.
The $6$th Veronese subring
\[
R^{(6)} = Q[x_1, x_2, t x_1^3, t x_1^2 x_2, t x_2^2]
\]
of $R$ is generated in degree~$1$, or equivalently, the graded rings $R_{0} \oplus \bigoplus_{m \in \mathbb Z_{\geq1}} t^m R_{6}^m$ and $\bigoplus_{m \in \mathbb Z_{\geq0}} t^m R_{6m}$ are equal. Therefore, $\varphi$ coincides with the blowup along the ideal $(x_1^3, x_1^2 x_2, x_2^2)$ of $Q[x_1, x_2]$, which is the integral closure of the ideal $(x_1^3, x_2^2)$ if $Q$ is an integral domain.
\end{example}

\begin{remark}
Note that the blowup of the affine $n$-space $\mathbb A_{\mathbb C}^{n}$ along a linear subspace $V(x_1, \ldots, x_k)$ can be written as the blowup along a point of $\mathbb A_{\mathbb C[x_{k+1}, \ldots, x_{n}]}^{k}$, the affine $k$-space over $\mathbb C[x_{k+1}, \ldots, x_n]$.
Analogously, every weighted blowup with a positive-dimensional centre can be written as a weighted blowup with centre a point with a possibly-larger coefficient ring.
So, since we allow general coefficient rings, considering only strictly positive weights and zero-dimensional centres does not restrict the setting.
\end{remark}

Every Rees ring $R_{\bm w}$ as in \cref{eqn:Rees ring R} is generated as a graded ring by homogeneous elements with degrees in the set $\{1, 2, \ldots, \max(\bm w)\}$. We have the following weak bound:

\begin{proposition}[{\cref{cor:bubble and generation in degree 1 - Rees,thm:graded Rees}}] \label{thm:introduction max}
Let $N := \operatorname{lcm}(1, 2, \ldots, \max(\bm w))$. Then, $R_{\bm w}^{(N)}$ is generated in degree~1.
\end{proposition}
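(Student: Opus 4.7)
The plan is to reduce this Rees-ring statement to the bubble criterion for graded rings, applied to an enlarged weight vector.

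First, I would unpack what $\bm w$-generation gives us: as an $R_0$-algebra, $R$ is generated by the homogeneous elements $t^d f_i$ for $i \in \{1, \ldots, n\}$ and $d \in \{1, \ldots, w_i\}$. Each such generator has positive degree at most $M := \max(\bm w)$, and conversely every integer $d \in \{1, \ldots, M\}$ is attained (by picking any $i$ with $w_i \geq d$). So, viewing $R$ purely as a graded $R_0$-algebra, it has a generating sequence of homogeneous elements whose degree set is exactly the enhanced weight vector $\bm v := (1, 2, \ldots, M)$, and $N = \operatorname{lcm}(\bm v) = d_{\bm v}$.

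Next, I would invoke \cref{thm:graded Rees} to recast degree-1 generation for a Veronese of a Rees ring into the form of the graded-ring bubble criterion applied to $\bm v$. The contrapositive of \cref{thm:nongeneration implies bubble} then reduces the proposition to showing that no bubble at degree $N$ exists for the weight vector $\bm v$. The two critical features of $\bm v$ at degree $N$ are that $1$ appears as a weight and that every entry of $\bm v$ divides $N$; together these rule out the arithmetic obstruction that a bubble would encode.

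The main obstacle will be this last no-bubble step. Intuitively, a bubble records a failure to split off a degree-$N$ submonomial from a larger one; but since every weight divides $N$ and the weight $1$ is present as a filler, any monomial of degree $kN$ ought to admit an inductive decomposition into $k$ pieces of degree $N$ (extract one piece of degree exactly $N$ using divisibility and weight-$1$ fillers, then recurse on the remainder). Making this combinatorial intuition precise within the bubble framework is exactly what the two cited theorems are designed to do, so once the reduction of the first two paragraphs is in place, the result drops out.
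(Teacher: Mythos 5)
Your setup is correct: $R$ is a graded ring whose generators have degree multiset contained in $\{1, \ldots, M\}$ with every integer in that range attained, so $d_{\bm v} = N$, and \cref{thm:nongeneration implies bubble} (applied in its graded-ring form to this weight vector $\bm v$) reduces the proposition to showing there are no \bubbles{$\bm v$}. Incidentally you attribute the ``recasting'' to \cref{thm:graded Rees}, but it is \cref{thm:nongeneration implies bubble} that translates generation in degree~1 into a bubble statement; \cref{thm:graded Rees} is the bubble-nonexistence input.

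The genuine gap is in your final paragraph. The heuristic you give --- ``since every weight divides $N$ and the weight $1$ is present as a filler, any monomial of degree $kN$ ought to admit an inductive decomposition'' --- is false as a general principle, and the paper's own \cref{exa:Delorme} is a counterexample: for $\bm w = (1, 6, 10, 15)$ we have $1$ present, every weight divides $d_{\bm w} = 30$, yet the monomial of exponent vector $(1,4,2,1)$ has weight $60 = 2 d_{\bm w}$ and admits no sub-monomial of weight exactly $30$. So ``extract one piece of degree $N$, then recurse'' can fail at the very first step, and neither of the two features you highlight is what rules out \bubbles{$\bm v$}. What actually does the work is quantitative: \cref{thm:maximum all weights} bounds every \bubble{$\bm w$}~$\bm u$ by $\bm w \cdot \bm u \leq (\max(\bm w) - 2)\sum_i w_i$, which for $\bm v = (1, \ldots, M)$ is polynomial in~$M$, whereas $N = \operatorname{lcm}(1, \ldots, M)$ grows faster; the paper's \cref{thm:graded Rees} checks $\operatorname{lcm}(1, \ldots, k) > (k-2)(1 + \cdots + k)$ for $k \geq 7$ and handles $\max(\bm w) < 7$ via \cref{thm:small weights}. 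Since a bubble needs $\bm v \cdot \bm u \geq 2 d_{\bm v} = 2N$, this bound kills all \bubbles{$\bm v$}. Your proposal would need to replace the filler heuristic with this size comparison to be correct.
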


We have analogues of \cref{rem:may assume divisible by d,rem:generation implies}:

\begin{remark}\label{rem:Rees divisible by d}
Let $k$ and $r$ be integers. If $n \geq 2$, then both of the following hold:
\begin{enumerate}
\item the sheaf $\mathcal O_{\operatorname{Proj} R_{\bm w}}(r)$ is invertible if and only if $d_{\bm w}$ divides~$r$,
\item the sheaf $\mathcal O_{\operatorname{Proj} R_{\bm w}}(kd_{\bm w})$ is generated by its global sections if and only if $k \geq 0$ and ample if and only if $k > 0$.
\end{enumerate}
\end{remark}

\begin{remark} \label{rem:Rees generation implies}
By \cref{thm:very ample}, if $R_{\bm w}^{(r)}$ is generated in degree~1, then $\mathcal O_{\operatorname{Proj} R_{\bm w}}(r)$ is invertible and very ample over~$\mathbb{A}^n_Q := \operatorname{Spec} ((R_{\bm w})_0)$.
\end{remark}

\begin{question} \label{que:very ample and generation equivalence - Rees}
Let $n \geq 2$ and $k$ be positive integers. If $\mathcal O_{\operatorname{Proj} R_{\bm w}}(kd_{\bm w})$ is very ample over $\mathbb{A}_Q^n$, then is $R_{\bm w}^{(kd_{\bm w})}$ generated in degree~1?
\end{question}

Similarly to \cref{exa:Delorme}, the $d_{\bm w}$th Veronese subring is not necessarily generated in degree~1:

\begin{example} \label{exa:Rees Delorme}
Let $\bm w := (1, 6, 10, 15)$.
Then, $R_{\bm w}^{(60)}$ is generated in degree~1, while $R_{\bm w}^{(30)}$ is not very ample over $\mathbb{A}^4_Q$.
\end{example}

The result analogous to \cref{thm:introduction coprime graded} for Rees rings is the following:

\begin{theorem}[{\cref{cor:bubble and generation in degree 1 - Rees,thm:coprime weights}, \cref{lem:simplification bubbles}\labelcref{itm:strongly}}] \label{thm:Rees coprime}
Let $\bm w$ be strongly equivalent to a vector of pairwise coprime integers.
Then, $R_{\bm w}^{(d_{\bm w})}$ is generated in degree~$1$.
\end{theorem}

By \cref{rem:Rees generation implies}, we have the immediate corollary:

\begin{corollary} \label{cor:Rees coprime}
Let $\bm w$ be strongly equivalent to a vector of pairwise coprime integers.
Then, the line bundle $\mathcal O_{\operatorname{Proj} R_{\bm w}}(d_{\bm w})$ is very ample over $\mathbb A_Q^n$.
\end{corollary}

Below, we give results on generation in degree 1 and very ampleness for both graded rings and Rees rings in various settings.

\subsection{Sharp upper bound} \label[sec]{sec:sharp}

In \cite[Lemma~(2.1.6)(v)]{EGAII}, it is shown that for all $k\geq \max(1, n-1)$, the Veronese subring $P_{\bm w}^{(kd_{\bm w})}$ is generated in degree~$1$. In the case $Q$ is a field, \cite[Theorem 1.3.3]{bgt97} and \cite[Theorem 1]{EW91} show that $P_{\bm w}^{(kd_{\bm w})}$ is generated in degree~$1$ for all $k\geq \max(1, n-2)$, improving Grothendieck's bound. By \cref{thm:bubble and generation in degree 1 - graded}, this actually holds for arbitrary nonzero coefficient rings~$Q$. We further strengthen this using our equivalence relations:

\begin{theorem}[{\cref{cor:bubble and generation in degree 1 - graded,thm:bound n-2 divisible}, \cref{lem:simplification bubbles}\labelcref{itm:weakly}}] \label{thm:sharp}
Let $n$ be a positive integer and let $\bm w \in \mathbb{Z}_{\geq1}^n$.
Then, for every integer $k \geq \max(1, N-2)$, where $N$ is the least length among vectors weakly equivalent to~$\bm w$, the graded ring $P_{\bm w}^{(kd_{\bm w})}$ is generated in degree~$1$.
\end{theorem}

\Cref{exa:sharp graded} shows the sharpness of \cref{thm:sharp} for every~$n$.

\begin{example}[{\cref{thm:bubble satisfying condition implies not very ample - graded,thm:infinitely many bubbles graded}, \cref{lem:condition}\labelcref{itm:condition weakly}, \cref{lem:representative}\labelcref{itm:weak representative}}] \label{exa:sharp graded}
Let $n \geq 2$ be an integer. For every vector $\bm p := (p_1, p_2, \ldots, p_n)$ of pairwise coprime integers greater than $1$ such that $k_{\bm p} = n-1$ (see \cref{lem:unique alpha} for the definition of $k_{\bm p}$), define
\[
\bm w_{\bm p} := \mleft( 1, \frac{\prod_{i=1}^{n} p_i}{p_1}, \ldots, \frac{\prod_{i=1}^{n} p_i}{p_{n}} \mright) \in \mathbb Z_{\geq1}^{n+1}.
\]
Note that the vector $\bm w_{\bm p}$ is well-formed.
Let $\bm w$ be a nonempty vector of positive integers such that a subsequence of $\bm w$ with the same least common multiple is weakly equivalent to~$\bm w_{\bm p}$.
Then, $\mathcal O_{\mathbb{P}_Q(\bm w)}((n-2)d_{\bm w})$ is not very ample over~$\operatorname{Spec} Q$.

For every $n \in \{2, 3, \ldots, 6\}$, \cref{tab:k} contains the unique vector $\bm p \in \mathbb Z_{\geq2}^n$ with the smallest vector $\bm w_{\bm p}$ in reverse lexicographic order such that $k_{\bm p} = n-1$. See \cref{lem:inductive,rem:after lemma inductive} how to inductively construct examples of $\bm p \in \mathbb Z_{\geq2}^n$ with $k_{\bm p} = n-1$ for every $n \geq 3$.
By \cref{thm:infinitely many for each k}, there exist infinitely many vectors $\bm p \in \mathbb Z_{\geq2}^n$ of pairwise different prime numbers with $k_{\bm p} = n-1$ for every $n \geq 2$.
\end{example}

\begin{table}[h!]
\centering
\caption{Examples with $k_{\bm p} = n-1$ and $k_{\bm q} = 1$\label{tab:k}}
\begin{tabular}{rrr}
\toprule
$n$ & $(p_1, p_2, \ldots, p_n)$ & $(q_1, q_2, \ldots, q_n)$\\*
\midrule
$2$ & $(3, 2)$ & $(3, 2)$\\
$3$ & $(5, 3, 2)$ & $(5, 4, 3)$\\
$4$ & $(8, 7, 5, 3)$ & $(11, 7, 4, 3)$\\
$5$ & $(19, 17, 5, 4, 3)$ & $(25, 11, 7, 4, 3)$\\
$6$ & $(29, 19, 13, 5, 4, 3)$ & $(17, 16, 13, 11, 9, 5)$\\
\bottomrule
\end{tabular}
\end{table}

To the best knowledge of the authors, this provides the first example of a graded ring $P$ generated by homogeneous elements of positive integer degrees $w_1, \ldots, w_n$ such that the $2d_{\bm w}$th Veronese subring of $P$ is not generated in degree~1.

The corresponding result for Rees rings is weaker:

\begin{theorem}[{\cref{cor:bubble and generation in degree 1 - Rees,thm:bound n-1}, \cref{lem:simplification bubbles}\labelcref{itm:strongly}}] \label{thm:Rees sharp bound}
Let $n$ be a positive integer and let $\bm w \in\mathbb{Z}_{\geq1}^n$.
Then, for every integer $k \geq \max(1, N-1)$, where $N$ is the least length among vectors strongly equivalent to~$\bm w$, the graded ring $R_{\bm w}^{(kd_{\bm w})}$ is generated in degree~$1$.
\end{theorem}

Again, the result is sharp:

\begin{example}[{\cref{thm:bubble satisfying condition implies not very ample - Rees}, \cref{thm:infinitely many bubbles rees}, \cref{lem:condition}\labelcref{itm:condition strongly}, \cref{lem:representative}\labelcref{itm:strong representative}}] \label{exa:sharp Rees}
Let $n \geq 2$ be an integer. For every vector $\bm q := (q_1, \ldots, q_n)$ of pairwise coprime integers greater than $1$ such that $k_{\bm q} = 1$ (see \cref{lem:unique alpha} for the definition of $k_{\bm q}$), define
\[
\bm v_{\bm q} := \mleft( \frac{\prod_{i=1}^{n} q_i}{q_1}, \ldots, \frac{\prod_{i=1}^{n} q_i}{q_n} \mright) \in \mathbb Z_{\geq1}^{n}.
\]
Let $\bm w$ be a nonempty vector of positive integers such that a subsequence of $\bm w$ with the same least common multiple is strongly equivalent to~$\bm v_{\bm q}$.
Then, $\mathcal O_{\operatorname{Proj} R_{\bm w}}((n-2)d_{\bm w})$ is not very ample over $\operatorname{Spec} (R_{\bm w})_0$.

For every $n \in \{2, 3, \ldots, 6\}$, \cref{tab:k} contains the unique vector $\bm q \in \mathbb Z_{\geq2}^n$ with the smallest vector $\bm v_{\bm q}$ in reverse lexicographic order such that $k_{\bm q} = 1$. See \cref{lem:inductive,rem:after lemma inductive} how to inductively construct examples of $\bm q \in \mathbb Z_{\geq2}^n$ with $k_{\bm q} = 1$ for every $n \geq 3$.
By \cref{thm:infinitely many for each k}, there exist infinitely many vectors $\bm q \in \mathbb Z_{\geq2}^n$ of pairwise different prime numbers with $k_{\bm q} = 1$ for every $n \geq 2$.
\end{example}

For weighted blowups with respect to well-formed weights, we have a stronger result:

\begin{theorem}[{\cref{cor:bubble and generation in degree 1 - Rees,thm:bound n-2 well-formed}, \cref{lem:simplification bubbles}\labelcref{itm:strongly}}] \label{thm:bound well-formed}
Let $n$ be a positive integer and let $\bm w \in\mathbb{Z}_{\geq1}^n$.
Then, for every integer $k \geq \max(1, N-2)$, where $N$ is the least length among well-formed vectors strongly equivalent to~$\bm w$, the graded ring $R_{\bm w}^{(kd_{\bm w})}$ is generated in degree~$1$.
\end{theorem}

The result is sharp by the following:

\begin{example}[{\cref{thm:bubble satisfying condition implies not very ample - Rees}, \cref{thm:infinitely many bubbles graded}, \cref{lem:condition}\labelcref{itm:condition weakly}, \cref{lem:representative}\labelcref{itm:weak representative}}] \label{exa:well-formed}
Define $\bm w_{\bm p} \in \mathbb Z_{\geq1}^{n+1}$ as in \cref{exa:sharp graded}.
Let $\bm w$ be a nonempty vector of positive integers such that a subsequence $\bm w$ with the same least common multiple is weakly equivalent to~$\bm w_{\bm p}$.
Then, $\mathcal O_{\operatorname{Proj} R_{\bm w}}((n-2)d_{\bm w})$ is not very ample over $\operatorname{Spec} (R_{\bm w})_0$.
\end{example}

\begin{remark}
\Cref{thm:Rees sharp bound,thm:bound well-formed} do not hold if we replace \emph{strongly equivalent} by \emph{weakly equivalent}. To see this, note that $\bm w := (6, 14, 21) = \bm v_{(7, 3, 2)}$ is weakly equivalent to the vector $(1) \in \mathbb Z^1$.
\end{remark}

\subsection{Small weights} \label[sec]{sec:intro small weights}

In this section, we discuss generation in degree 1 and very ampleness for small weights.

\newcommand\setAWeak{
    \mleft\{
        \begin{gathered}
            (1, 6, 10, 15),\, (1, 6, 22, 33),\, (1, 12, 20, 30),\\
            (1, 12, 21, 28),\, (1, 12, 22, 33),\, (1, 15, 21, 35),\\
            (1, 21, 28, 36),\, (1, 21, 30, 35),\, (2, 12, 15, 20),\\
            (2, 12, 20, 30),\, (2, 21, 30, 35),\, (4, 15, 24, 40),\\
            (4, 24, 30, 40),\, (1, 3, 24, 30, 40)
        \end{gathered}
    \mright\}
}

\newcommand\setBStrongUnionAWeak{
    \mleft\{
        \begin{gathered}
            (6, 14, 21),\, (6, 26, 39),\, (10, 14, 35),\, (12, 15, 20),\\
            (12, 26, 39),\, (14, 20, 35),\, (15, 20, 24),\, (15, 24, 40),\\
            (20, 28, 35),\, (24, 30, 40),\, (28, 35, 40),\\
            (1, 6, 10, 15),\, (1, 6, 22, 33),\, (1, 12, 20, 30),\\
            (1, 12, 21, 28),\, (1, 12, 22, 33),\, (1, 15, 21, 35),\\
            (1, 21, 28, 36),\, (1, 21, 30, 35),\, (2, 12, 20, 30),\\
            (2, 21, 30, 35),\, (3, 12, 20, 30)
        \end{gathered}
    \mright\}
}

\newcommand\setA{
    \mleft\{
        \begin{gathered}
            (1, 6, 10, 15),\, (1, 6, 22, 33),\, (1, 12, 21, 28),\\
            (1, 12, 22, 33),\, (1, 15, 21, 35),\, (1, 21, 28, 36),\\
            (1, 21, 30, 35),\, (2, 21, 30, 35),\, (1, 3, 24, 30, 40)
        \end{gathered}
    \mright\}
}

\newcommand\setB{
    \mleft\{
        \begin{gathered}
            (6, 14, 21),\, (6, 26, 39),\, (10, 14, 35),\, (12, 15, 20),\\
            (12, 26, 39),\, (14, 20, 35),\, (15, 20, 24),\, (15, 24, 40),\\
            (20, 28, 35),\, (28, 35, 40),\, (3, 12, 20, 30)
        \end{gathered}
    \mright\}
}

\begin{definition} \label{def:sets}
Define the sets
\begin{gather*}
    A := \setA{},\\
    B := \setB{}.
\end{gather*}
Let $A_{\mathrm{w}}$ denote the set of all the strictly increasing nonempty vectors of positive integers $\bm w$ satisfying $\max(\bm w) < \upperbound{}$ such that $\bm w$ is equivalent to some element of $A$ under the operations (1), (3) and (5) of \cref{def:equivalences}.
Let $B_{\mathrm{s}}$ denote the set of all the strictly increasing nonempty vectors of positive integers $\bm w$ satisfying $\max(\bm w) < \upperbound{}$ such that $\bm w$ is equivalent to some element of $B$ under the operations (1) and~(3).
Let $C$ denote the union of $B_{\mathrm{s}}$ and the vectors $\bm w$ in $A_{\mathrm{w}}$ such that no subsequence of $\bm w$ with the same least common multiple is in $B_{\mathrm{s}}$.
\end{definition}

\begin{remark} \label{rem:small large}
We have
\begin{gather*}
    A_{\mathrm{w}} = \setAWeak{},\\
    C = \setBStrongUnionAWeak{}.
\end{gather*}
\end{remark}

\begin{theorem}[{\cref{cor:bubble and generation in degree 1 - graded,thm:bubble satisfying condition implies not very ample - graded,thm:small weights}}] \label{thm:intro small graded}
Let $n$ be a positive integer and let $\bm w\in\mathbb{Z}^n_{\geq1}$.
Then, all of the following hold:
\begin{enumerate}[label=\textup{(\alph*)}, ref=\alph*]
\item if $\max(\bm w) < \upperbound{}$, then $P_{\bm w}^{(2d_{\bm w})}$ is generated in degree~$1$,
\item if $\max(\bm w) < \upperbound{}$, then $P_{\bm w}^{(d_{\bm w})}$ is not generated in degree $1$ if and only if, up to permutation, a subsequence of $\bm w$ with the same least common multiple is in~$A_{\mathrm{w}}$,
\item if a subsequence of $\bm w$ with the same least common multiple is weakly equivalent to an element of~$A$, then $\mathcal O_{\mathbb P_Q(\bm w)}(d_{\bm w})$ is not very ample over~$\operatorname{Spec} Q$.
\end{enumerate}
\end{theorem}

\begin{remark}
The general strategy we use to prove \cref{thm:intro small graded} and to obtain the set $A$ is the following:
\begin{itemize}
\item enumerate all strictly ordered vectors $\bm w \in \mathbb Z_{\geq1}^n$ with $\max(\bm w) < \upperbound{}$,
\item for each such vector $\bm w$, list all the monomials of weighted degree $d_{\bm w}$ and all the monomials of weighted degree $2d_{\bm w}$,
\item check one-by-one which monomials of weighted degree $2d_{\bm w}$ are a product of two monomials of weighted degree $d_{\bm w}$.
\end{itemize}
Directly applying this strategy is hopeless: even if we consider only one vector~$\bm w$, in the extremal case $\bm w := (1, 2, \ldots, 41)$, there are roughly up to $10^{596}$ monomials of weighted-degree $d_{\bm w}$ and roughly up to $10^{608}$ monomials of weighted-degree~$2d_{\bm w}$.
Instead, we develop a specialised algorithm, given in \cref{app:algorithm}, that uses various clever estimations to reduce the number of monomials to compute for a given vector~$\bm w$. We still loop through all the roughly two trillion vectors $\bm w$ one-by-one.
See \cref{rem:computational} for more remarks on computational complexity and \cref{algo: proof algorithm,algo: proof bubbles algorithm} for the proofs of the algorithm.
\end{remark}

\begin{remark}
\Cref{thm:intro small graded} can be used to describe very ample divisors on some known lists of weighted hypersurfaces.
\begin{enumerate}[label=\textup{(\alph*)}, ref=\alph*]
\item By \cite[Theorem~4.5]{Rei80} and \cite[13.3]{IF00}, there is a list of 95 families of canonical K3 surfaces that are well-formed quasismooth hypersurfaces in weighted projective spaces over the complex numbers. Since the largest weight that appears in the list is~33, it is easy to check using \cref{thm:intro small graded} that $\mathcal O_{\mathbb P_{\mathbb{C}}(\bm w)}(d_{\bm w})$ is very ample for every vector $\bm w := (w_1, w_2, w_3, w_4)$ in that list. Moreover, by \cref{rem:generation implies}\labelcref{itm:normal domain}, the global sections of $\mathcal O_{\mathbb P_{\mathbb{C}}(\bm w)}(d_{\bm w})$ define a projectively normal embedding.
\item Similarly, in \cite[16.6]{IF00} there is a list of 95 families of quasismooth terminal Fano varieties $X$ of dimension 3 that are well-formed quasismooth hypersurfaces in weighted projective spaces over the complex numbers with anticanonical class $\mathcal O_X(1)$ that are not linear cones. Its completeness was conjectured in \cite[15.2]{IF00} and proved in \cite[Theorem~6.1]{CCC11}, see \cite[Theorem~2.11]{Oka23} for an alternative proof. By \cref{rem:generation implies}\labelcref{itm:normal domain}, for all the weight vectors~$\bm w$, the line bundle $\mathcal O_{\mathbb P_{\mathbb{C}}(\bm w)}(d_{\bm w})$ is very ample and their global sections define projectively normal embeddings.
\end{enumerate}
\end{remark}

\begin{remark}
    According to \cite[Corollary 1.7.8]{PS26} a well-formed weighted projective space $\mathbb{P}_\mathbb C(w_1, \ldots, w_n)$ is Gorenstein if and only if $d_{\bm w}$ divides $\sum_{i=1}^nw_i$.
    For each positive integer~$n$, there is a bijection between the set of vectors $(w_1, \ldots, w_n) \in \mathbb Z_{\geq1}^n$ with $\gcd(\bm w)$ equal to $1$ and $\sum_{i=1}^nw_i$ divisible by $d_{\bm w}$ and the set of vectors $\left(x_1, \ldots, x_n\right) \in \mathbb Z_{\geq1}^n$ with sum of reciprocals equal to~$1$. Namely, we have the following bijection and its inverse:
    \begin{align*}
        (w_1, \ldots, w_n) & \mapsto \left(\frac{\sum_{i=1}^nw_i}{w_1},\ldots, \frac{\sum_{i=1}^nw_i}{w_n}\right),\\
        \left(x_1, \ldots, x_n\right) & \mapsto \left(\frac{d_{\bm x}}{x_1},\ldots, \frac{d_{\bm x}}{x_n}\right).
    \end{align*}
    Since for each positive integer $n$ there are only finitely many vectors $\left(x_1, \ldots, x_n\right) \in \mathbb Z_{\geq1}^n$ with sum of reciprocals equal to~$1$, there are only finitely many Gorenstein well-formed weighted projective spaces in each dimension.

    Our methods show that the anticanonical divisor of every Gorenstein well-formed weighted projective space of dimension up to 4 is very ample and that this is not true in dimension 5. In particular, the reverse lexicographically smallest example of a Gorenstein well-formed weighted projective space with an anticanonical divisor that is not very ample is $\mathbb{P}_Q(1,2,10,12,15,20)$.
    This answers a question of Kristin DeVleming.
\end{remark}

We have a corresponding result for Rees rings and weighted blowups.

\begin{theorem}[{\cref{cor:bubble and generation in degree 1 - Rees,thm:bubble satisfying condition implies not very ample - Rees,thm:small weights}}] \label{thm:introduction small weights Rees}
Let $n$ be a positive integer and let $\bm w\in\mathbb{Z}^n_{\geq1}$.
Then, all of the following hold:
\begin{enumerate}[label=\textup{(\alph*)}, ref=\alph*]
\item if $\max(\bm w) < \upperbound{}$, then $R_{\bm w}^{(2d_{\bm w})}$ is generated in degree~$1$,
\item if $\max(\bm w) < \upperbound{}$, then $R_{\bm w}^{(d_{\bm w})}$ is not generated in degree $1$ if and only if, up to permutation, a subsequence of $\bm w$ with the same least common multiple is in~$C$,
\item if a subsequence of $\bm w$ with the same least common multiple is weakly equivalent to an element of $A$ or strongly equivalent a element of~$B$, then $\mathcal O_{\operatorname{Proj} R_{\bm w}}(d_{\bm w})$ is not very ample over $\mathbb{A}^n_Q$.
\end{enumerate}
\end{theorem}

\begin{remark}
The example $(10, 14, 35)$ appeared already in \cite[Example~3.5]{AT14}.
\end{remark}

\subsection{Generic behaviour}

By \cref{exa:sharp graded}, there exist infinitely many well-formed vectors $\bm w$ such that $P_{\bm w}^{(d_{\bm w})}$ is not generated in degree~1.
On the other hand, \cref{thm:intro small graded} suggests that for most weight vectors~$\bm w$, the $d_{\bm w}$th Veronese subring $P_{\bm w}^{(d_{\bm w})}$ is generated in degree~$1$.
Below, we make this precise.
We let $\{1,2,\ldots, M\}^n$ denote the set of integer vectors $(w_1, w_2, \ldots, w_n)$ where for all $i \in \{1, \ldots, n\}$, we have $1 \leq w_i \leq M$.

\begin{theorem}[{\cref{cor:bubble and generation in degree 1 - graded,cor:bubble and generation in degree 1 - Rees,thm:no bubbles}}] \label{thm:probability}
    For all positive integers $n$ and~$M$, we define
    \[
    p(n, M) := \frac
        {
            \mleft|
                \mleft\{
                    \bm w\in \{1,2,\ldots, M\}^n
                    \;\middle|\;
                    \begin{gathered}
                        \text{$P_{\bm w}^{(d_{\bm w})}$ and $R_{\bm w}^{(d_{\bm w})}$ are}\\
                        \text{generated in degree~$1$}
                    \end{gathered}
                \mright\}
            \mright|
        }
        {M^n}.
    \]
    Then, for every positive integer~$n$,
    \[\lim_{M\to+\infty}p(n, M) = 1.\]
\end{theorem}

The proof of \cref{thm:probability} shows that the following criterion works for almost all of the vectors~$\bm w$.

\begin{theorem}[{\cref{cor:bubble and generation in degree 1 - graded,cor:bubble and generation in degree 1 - Rees,thm:maximum all weights}}]
    Let $n$ be a positive integer and let $\bm w\in \mathbb{Z}_{\geq1}^n$.
    If $2d_{\bm w} > (\max(\bm w)-2)\sum_{i=1}^nw_i$, then $P_{\bm w}^{d_{\bm w}}$ and $R_{\bm w}^{d_{\bm w}}$ are generated in degree~$1$.
\end{theorem}

\begin{remark}
    The various criteria given above and in \cref{sec:tools} can be used to prove very ampleness of $\mathcal O(d_{\bm w})$ for almost all weighted projective spaces $\mathbb P_Q(\bm w)$. For the cases where these criteria do not apply, we have \cref{alg: Bubbles algorithm}. For example, for the weighted projective space $\mathbb P_\mathbb C(2, 12, 21, 49)$ that recently appeared in \cite[Theorem~1.9]{DLT26}, \cref{alg: Bubbles algorithm} shows that $\mathcal{O}(588)$ is very ample and defines a projectively normal embedding.
\end{remark}

\subsection{Overview of the paper}

In \cref{sec:relating}, we introduce the notion of a \emph{bubble}.
An example of a bubble is the vector $\bm u$ in \cref{exa:Delorme}.
We prove equivalence of generation in degree 1 and the existence of bubbles satisfying \cref{cond:combinatorial generation}, as well as very ampleness and the existence of bubbles satisfying \cref{cond:combinatorial very ampleness}.
In \cref{sec:tools}, we give sufficient conditions for the nonexistence of bubbles.
In \cref{sec:bubbles,sec:small}, we use these conditions to prove generation in degree~1. In \cref{sec:nullity}, we prove that almost no weight vectors admit a bubble.
In \cref{sec:infinitude}, we show that there are nonetheless infinitely many weight vectors admitting a bubble and we construct series of examples showing that the bounds in \cref{sec:bubbles} are sharp.

In \cref{app:algorithm}, we describe the algorithms used to compute the finite sets of vectors of weights less than \upperbound{} in \cref{sec:intro small weights}.
In \cref{app:tables}, we give the tables of vectors of weights less than \upperbound{} that admit a bubble with positive coordinates.

\begin{remark}
    The paper \cite{ABL25}, developed independently and concurrently, has proved certain similar results in the context of simplices, also discussing various relations with polytopes and almost 1-normality of rectangular simplices. In particular, slightly weaker versions of \cref{thm:introduction coprime graded,exa:sharp graded} appear in \cite{ABL25}.
\end{remark}

\section{Relating bubbles to rings and very ampleness} \label[sec]{sec:relating}

In this section, we reduce the problem of determining whether the line bundles we are interested in are very ample to a combinatorial problem.
For this, we introduce the notion of \emph{bubbles}; the rest of the paper is dedicated to thoroughly studying these bubbles, determining when they exist, and under which circumstances the existence of bubbles is an obstruction to generation in degree 1 of certain Veronese subrings or to very ampleness.

\subsection{Notation}

We remind that $Q$ is a fixed nonzero ring and $P_{\bm w}$ and $R_{\bm w}$ are defined as in the beginning of \cref{sec:intro polynomial,sec:intro Rees}, respectively, namely by equations \eqref{eqn:intro polynomial} and \eqref{eqn:Rees ring R}, respectively.

\Cref{tab:notation} contains the notation that we use throughout the paper.
Let $n$ be a positive integer and $\bm w := (w_1, w_2, \ldots, w_n)$ a vector of positive integers.

\begin{longtable}{ll}
\caption{List of notation\label{tab:notation}}\\
\toprule
\endfirsthead
\endhead
\bottomrule
\endlastfoot
$d_{\bm w}$ & The least common multiple of $w_1, \ldots, w_n$.\\
$\lfloor x\rfloor$ & The greatest integer not greater than the real number~$x$.\\
$\lceil x \rceil$ & The least integer not smaller than the real number~$x$.\\
$\abs{S}$ & The number of elements of a finite set~$S$.\\
$\mathbb Z_{\geq k}^n$ & $\{(v_1, \ldots, v_n) \in \mathbb Z^n \mid \forall i \in \{1, \ldots, n\}\colon v_i \geq k\}$.\\
$\bm a \cdot \bm b$ & The scalar product $\sum_{i \in \{1, \ldots, n\}} a_i b_i$ of\\*
    & $\bm a := (a_1, \ldots, a_n) \in \mathbb Z_{\geq0}^n$ and $\bm b := (b_1, \ldots, b_n) \in \mathbb Z_{\geq0}^n$.\\
$\preceq$ & The product order on $\mathbb Z_{\geq0}^n$, meaning that the partial\\*
    & order on $\mathbb Z_{\geq0}^n$ such that $\bm a \preceq \bm b$ holds if and only if the\\*
    & inequality $a_i \leq b_i$ holds for every $i \in \{1, \ldots, n\}$.\\
$\bm e_i$ & The unit vector in $\mathbb Z_{\geq0}^n$ with $1$ in the $i$th coordinate.\\
$\pi_i$ & The projection from $\mathbb Z_{\geq0}^n$ to $\mathbb Z_{\geq0}^{n-1}$ which drops the $i$th\\*
    & coordinate.\\
$\pi_I$ & The projection $\mathbb Z_{\geq0}^n \to \mathbb Z_{\geq0}^{n - \abs{I}}$ which drops each\\*
    & coordinate $i \in I$, where $I \subseteq \{1, \ldots, n\}$.\\
$\kappa_i$ & The inverse of the restriction of $\pi_i$ to $\{\bm u \in \mathbb Z_{\geq0}^n \mid u_i = 0\}$.\\
$\kappa_I$ & The inverse of the restriction of $\pi_I$ to\\*
    & $\{\bm u \in \mathbb Z_{\geq0}^n \mid \forall i \in I\colon u_i = 0\}$, where $I \subseteq \{1, \ldots, n\}$.\\
$V_{\bm w}(m)$ & $\mleft\{ \bm v \in \mathbb Z_{\geq0}^n \;\middle|\; \bm w \cdot \bm v = m \mright\}$, where $m$ is a positive integer.\\
$V_{\bm w}^+(m)$ & $\mleft\{ \bm v \in \mathbb Z_{\geq0}^n \;\middle|\; \bm w \cdot \bm v \geq m \mright\}$, where $m$ is a positive integer.\\
$mS$ & The Minkowski sum $S + \ldots + S$ with $m$ summands,\\*
    & meaning the set of vectors $\bm s_1 + \ldots + \bm s_m$ in $\mathbb Z_{\geq0}^n$ with\\*
    & each $\bm s_i$ in $S$, where $m$ is a positive integer and $S \subseteq \mathbb Z_{\geq0}^n$.
\end{longtable}

\begin{definition}
A \textbf{\bubble{$\bm w$}} is an element $\bm u$ of $\mathbb Z_{\geq0}^n$ such that the inequality $\bm w \cdot \bm u \geq 2 d_{\bm w}$ holds and there is no vector $\bm v \in \mathbb Z_{\geq0}^n$ satisfying $\bm w \cdot \bm v = d_{\bm w}$ and $\bm v \preceq \bm u$.
\end{definition}

We define the following two conditions for every triple $(\bm w, \bm u, k)$ where $\bm u\in\mathbb{Z}_{\geq 0}^n$ and $k$ is a positive integer:

\begin{condition} \label{cond:combinatorial generation}
There exist vectors $\bm v_1, \ldots, \bm v_r \in V_{\bm w}(d_{\bm w})$, where $m := \max(1, \lceil \lfloor\frac{\bm w \cdot \bm u}{d_{\bm w}}\rfloor / k \rceil)$
and $r := mk - \lfloor\frac{\bm w \cdot \bm u}{d_{\bm w}}\rfloor$, such that
\begin{equation} \label{eqn:combinatorial generation}
\bm u + \bm v_1 + \ldots + \bm v_r \notin m V_{\bm w}^+(k d_{\bm w}).
\end{equation}
\end{condition}

\begin{condition} \label{cond:combinatorial very ampleness}
There exists $i \in \{1, \ldots, n\}$ such that for all positive integers~$m$, we have
\begin{equation} \label{eqn:combinatorial very ampleness}
\bm u + \frac{(km - \lfloor\frac{\bm w \cdot \bm u}{d_{\bm w}}\rfloor)d_{\bm w}}{w_i} \bm e_i \notin mV_{\bm w}^+(k d_{\bm w}).
\end{equation}
\end{condition}

We will see in \cref{thm:bubble and generation in degree 1 - graded,thm:bubble and generation in degree 1 - Rees} that bubbles satisfying \cref{cond:combinatorial generation} are an obstruction to generation in degree 1 and we will see in \cref{thm:bubble satisfying condition implies not very ample - graded,thm:bubble satisfying condition implies not very ample - Rees} that bubbles satisfying \cref{cond:combinatorial very ampleness} are an obstruction to very ampleness.

\begin{remark} \label{rem:conditions}
We make the following remarks on \cref{cond:combinatorial generation,cond:combinatorial very ampleness}:
\begin{enumerate}[label=\textup{(\alph*)}, ref=\alph*]
    \item $(\bm w, \bm u, k)$ satisfies \cref{cond:combinatorial very ampleness} if and only if for every positive integer~$m$, there exist $\bm v_1, \ldots, \bm v_r \in V_{\bm w}(d_{\bm w})$, where $r := mk - \lfloor\frac{\bm w \cdot \bm u}{d_{\bm w}}\rfloor$, such that \eqref{eqn:combinatorial generation} holds,
    \item \label{itm:trivial} if $(\bm w, \bm u, k)$ satisfies \cref{cond:combinatorial very ampleness}, then it also satisfies \cref{cond:combinatorial generation},
    \item \label{itm:obstruction weight} if $(\bm w, \bm u, k)$ satisfies \cref{cond:combinatorial generation} or~\labelcref{cond:combinatorial very ampleness}, then $\bm w \cdot \bm u \geq (k+1)d_{\bm w}$ and $\bm u \notin \lfloor\frac{\bm w \cdot \bm u}{d_{\bm w}}\rfloor V_{\bm w}^+(d_{\bm w})$,
    \item \label{itm:small m} if $\bm u$ is a \bubble{$\bm w$}, then \eqref{eqn:combinatorial very ampleness} automatically holds for all $i \in \{1, \ldots, n\}$ and positive integers $m < \lfloor\frac{\bm w \cdot \bm u}{d_{\bm w}}\rfloor / k$,
    \item if $\bm u$, $k$ and $i$ are such that \eqref{eqn:combinatorial very ampleness} holds for all positive integers $m$ that are at most $\max(1,\, \sum_{j \in \{1, \ldots, n\} \setminus \{i\}} u_j)$, then \eqref{eqn:combinatorial very ampleness} holds for all positive integers~$m$.
\end{enumerate}
\end{remark}

\Cref{thm:a sufficient condition for condition 2.2} will give a criterion for satisfying \cref{cond:combinatorial very ampleness} when $k = 1$, the most important case.

\subsection{Relating generation in degree 1 to the existence of bubbles}

In the following list of results, we relate the existence of bubbles to a Veronese subring being generated in degree 1.

\begin{proposition}\label{thm:bubble and generation in degree 1 - graded}
Let $k$ be a positive integer.
Then, $P_{\bm w}^{(kd_{\bm w})}$ is generated in degree $1$ if and only if there is no \bubble{$\bm w$}~$\bm u$ with $d_{\bm w} \mid \bm w \cdot \bm u$ such that $(\bm w, \bm u, k)$ satisfies \cref{cond:combinatorial generation}.
\end{proposition}

\begin{proof}
    In order to lighten the notation, denote $P := P_{\bm w}$. For every vector $\bm a \in \mathbb Z_{\geq0}^n$, denote
    \[
    \bm x^{\bm a} := x_1^{a_1} \cdot \ldots \cdot x_n^{a_n} \in P.
    \]

    If the subring $P^{(kd_{\bm w})}$ is not generated in degree~$1$, then there exists a vector $\bm a\in\mathbb{Z}_{\geq0}^n$ satisfying $\bm w\cdot\bm a = mkd_{\bm w}$ for some integer $m\geq 2$ such that $\bm x^{\bm a}\in P_{mkd_{\bm w}}\setminus P_{ kd_{\bm w}}^m$. Without loss of generality, such a vector can be chosen such that there exists no vector $\bm v\in V_{\bm w}(kd_{\bm w})$ such that $\bm v\preceq \bm a$.

    If the vector $\bm a$ is a \bubble{$\bm w$}, then the triple $(\bm w, \bm a, k)$ trivially satisfies \cref{cond:combinatorial generation}, by definition of $\bm a$. Otherwise, we can find a sequence $\bm v_1,\ldots,\bm v_N\in V_{\bm w}(d_{\bm w})$ of maximal length such that $\sum_{i=1}^N\bm v_i\preceq \bm a$. By the choice of $\bm a$, the inequality $N\leq k-1$ necessarily holds. It follows that the vector defined as $\bm u := \bm a-\sum_{i=1}^N\bm v_i\in\mathbb{Z}_{\geq0}^n$ is a \bubble{$\bm w$} which satisfies
    \[\mleft\lfloor \frac{\bm w\cdot \bm u}{d_{\bm w}}\mright\rfloor = mk - N > (m-1)k.\]
    In particular, the triple $(\bm w, \bm u, k)$ satisfies \cref{cond:combinatorial generation}.

    Conversely, suppose that there exists a \bubble{$\bm w$}~$\bm u$ with $d_{\bm w} \mid \bm w \cdot \bm u$ such that $(\bm w, \bm u, k)$ satisfies \cref{cond:combinatorial generation}. Then, from the fact that $l := \frac{\bm w\cdot \bm u}{d_{\bm w}}$ is an integer, we know that there exist vectors $\bm v_1,\ldots,\bm v_r\in V_{\bm w}(d_{\bm w})$, where $r = mk-l$ and $m = \lceil \frac{l}{k}\rceil$, satisfying
    \[\bm a := \bm u+\bm v_1+\ldots +\bm v_r\in V_{\bm w}(mkd_{\bm w})\setminus mV_{\bm w}(kd_{\bm w}).\]
    Hence by the existence of $\bm a$, the Veronese subring $P^{(kd_{\bm w})}$ cannot be generated in degree~$1$. This concludes the proof.
\end{proof}

\begin{corollary} \label{cor:bubble and generation in degree 1 - graded}
Let $k$ be a positive integer.
If no \bubble{$\bm w$} $\bm u$ satisfies both $\bm w \cdot \bm u \geq (k+1) d_{\bm w}$ and $d_{\bm w} \mid \bm w \cdot \bm u$, then $P_{\bm w}^{(kd_{\bm w})}$ is generated in degree~$1$. The converse holds if $k = 1$.
\end{corollary}

\begin{proof}
    The first part follows from \cref{thm:bubble and generation in degree 1 - graded} and \cref{rem:conditions}\labelcref{itm:obstruction weight}. For the last statement, suppose that $P_{\bm w}^{(d_{\bm w})}$ is generated in degree~$1$. It follows that any vector $\bm u\in\mathbb{Z}_{\geq0}^n$ with $\bm w\cdot \bm u = md_{\bm w}$ for some $m\geq 1$ lies in $mV_{\bm w}(d_{\bm w})$. Hence none of the \bubbles{$\bm w$}~$\bm u$ satisfy $d_{\bm w} \mid \bm w \cdot \bm u$.
\end{proof}

\begin{proposition}\label{thm:bubble and generation in degree 1 - Rees}
Let $k$ be a positive integer.
Then, $R_{\bm w}^{(kd_{\bm w})}$ is generated in degree $1$ if and only if there is no \bubble{$\bm w$}~$\bm u$ such that $(\bm w, \bm u, k)$ satisfies \cref{cond:combinatorial generation}.
\end{proposition}

\begin{proof}
    Assume that $R_{\bm w}^{(kd_{\bm w})}$ is not generated in degree~$1$. Similarly to the proof of \cref{thm:bubble and generation in degree 1 - graded}, one can show the existence of a vector $\bm a\in \mathbb{Z}_{\geq0}^n$ and an integer $m\geq 2$ satisfying $\bm w\cdot\bm a\geq mkd_{\bm w}$ and $\bm a\notin mV_{\bm w}^+(kd_{\bm w})$, and such that there exists no vector $v\in V_{\bm w}(kd_{\bm w})$ satisfying $\bm v\preceq \bm a$. If $\bm a$ is a \bubble{$\bm w$}, we set $\bm u := \bm a$ and $N := 0$. Otherwise, we find a sequence of vectors $\bm v_1,\ldots, \bm v_N\in V_{\bm w}(d_{\bm w})$, where $1\leq N\leq k-1$, such that the vector $\bm u := \bm a-\sum_{i=1}^N\bm v_i$ is a \bubble{$\bm w$}. In both cases, the following inequalities hold:
    \[\bm w \cdot \bm u \geq mkd_{\bm w}-Nd_{\bm w} > (m-1)kd_{\bm w},\]
    and the triple $(\bm w, \bm u, k)$ satisfies \cref{cond:combinatorial generation}.
    The rest of the proof follows similarly to that of \cref{thm:bubble and generation in degree 1 - graded}.
\end{proof}

\begin{corollary} \label{cor:bubble and generation in degree 1 - Rees}
Let $k$ be a positive integer.
If no \bubble{$\bm w$} $\bm u$ satisfies both $\bm w \cdot \bm u \geq (k+1) d_{\bm w}$ and $\bm u \notin \lfloor\frac{\bm w \cdot \bm u}{d_{\bm w}}\rfloor V_{\bm w}^+(d_{\bm w})$, then $R_{\bm w}^{(kd_{\bm w})}$ is generated in degree~$1$.
The converse holds if $k = 1$.
\end{corollary}

\begin{proof}
The first part follows from \cref{thm:bubble and generation in degree 1 - Rees} and \cref{rem:conditions}\labelcref{itm:obstruction weight}. For the last statement, we simply remark that for any vector $\bm u\in\mathbb{Z}_{\geq0}^n$ with $\bm w\cdot \bm u\geq d_{\bm w}$, the triple $(\bm w, \bm u, 1)$ satisfies \cref{cond:combinatorial generation} if and only if $\bm u \notin \lfloor\frac{\bm w \cdot \bm u}{d_{\bm w}}\rfloor V_{\bm w}^+(d_{\bm w})$. Note that by definition, any \bubble{$\bm w$}~$\bm u$ satisfies $\bm w\cdot \bm u \geq 2d_{\bm w}$.
\end{proof}

In this section, we connected generation in degree 1 and the existence of bubbles.
Before we relate very ampleness and bubbles, we state some general results on very ampleness.

\subsection{Very ampleness} \label{sec:schemes}

We give some basic results on very ampleness in a general setting.

\begin{remark}
    In this paper, we use the definition of relative very ampleness of an invertible sheaf given in \cite[Definition \href{https://stacks.math.columbia.edu/tag/01VM}{01VM}]{Stacks}.
    Note that this differs from \cite[definition above Remark~II.5.16.1]{Har77}.
    In particular, by \cite[Exercise II.7.14]{Har77}, \cref{thm:ample} would not hold using the definition of relative very ampleness in \cite{Har77}.
    The two definitions coincide when the base is affine and the schemes are Noetherian.
\end{remark}

\Cref{thm:ample} gives a characterisation of very ampleness that is useful for our purposes, namely relating very ampleness to the generation in degree 1 of a certain graded algebra.

\begin{proposition} \label{thm:ample}
Let $\varphi \colon Y \to X$ be a proper morphism of schemes and let $\mathcal L$ be an invertible $\mathcal O_Y$-module. Then, all of the following hold:
\begin{enumerate}[label=\textup{(\alph*)}, ref=\alph*]
\item \label{itm:scheme ample} $\mathcal L$ is $\varphi$-ample if and only if $Y$ is isomorphic over $X$ to $\operatorname{Proj}_X \mathcal R$, where $\mathcal R := \bigoplus_{m \in \mathbb Z_{\geq0}} \varphi_* (\mathcal L^{\otimes m})$,
\item \label{itm:scheme very ample} $\mathcal L$ is $\varphi$-very ample if and only if $Y$ is isomorphic over $X$ to $\operatorname{Proj}_X \mathcal S$, where $\mathcal S$ is the $\mathbb Z_{\geq0}$-graded $\mathcal O_X$-subalgebra of $\mathcal R$ generated over $\mathcal O_X$ by~$\varphi_* \mathcal L$,
\item \label{itm:scheme localisation} $\mathcal L$ is $\varphi$-very ample if and only if for every affine open $U \subseteq X$, we have $\operatorname{Proj} \mathcal R(U) = \bigcup_{f \in \mathcal R_1(U)} \operatorname{Spec} \mathcal R(U)_{(f)}$ and the inclusions $\iota_{(f)}\colon \mathcal S(U)_{(f)} \to \mathcal R(U)_{(f)}$ are surjective for all $f \in \mathcal R_1(U)$, where $\iota\colon \mathcal S \to \mathcal R$ is the inclusion map,
\item \label{itm:scheme ideal sheaf} if $\mathcal L$ is an $\mathcal O_Y$-submodule of the sheaf of rational functions~$\mathcal K_Y$, then we have a natural isomorphism of $\mathbb Z_{\geq0}$-graded $\mathcal O_X$-algebras $\mathcal R \to \mathcal O_Y \oplus \bigoplus_{m \in \mathbb Z_{\geq1}} \varphi_* (\mathcal L^m)$, where the product on the right-hand side is the product in~$\mathcal K_Y$.
\end{enumerate}
\end{proposition}

\begin{proof}
\labelcref*{itm:scheme ample}
``$\Longrightarrow$''. This is \cite[Lemma \href{https://stacks.math.columbia.edu/tag/0C6J}{0C6J}]{Stacks}.

``$\Longleftarrow$''. It suffices to prove the case where $X$ is affine. By \cite[Lemma \href{https://stacks.math.columbia.edu/tag/01K4}{01K4}]{Stacks}, $Y$ is quasi-compact over $X$. The scheme $Y$ is covered by affine opens $D_+(f)$ where
\[
f \in H^0(X, \varphi_*(\mathcal L^{\otimes m})) = H^0(Y, \mathcal L^{\otimes m})
\]
and $m$ is a positive integer.
By \cite[Remark~13.46(4)]{GW20}, for each such $f$ we have an equality $D_+(f) = Y_f$.
By \cite[Definition \href{https://stacks.math.columbia.edu/tag/01PS}{01PS}]{Stacks}, it follows that $\mathcal L$ is $\varphi$-ample.

\labelcref*{itm:scheme very ample}
``$\Longrightarrow$''.
By \cite[Lemma \href{https://stacks.math.columbia.edu/tag/01VR}{01VR}]{Stacks}, we obtain an immersion $Y \to \mathbb P(\varphi_* \mathcal L)$ which corresponds to the $\mathbb Z_{\geq0}$-graded $\mathcal O_X$-algebra homomorphism $\operatorname{Sym} \varphi_* \mathcal L \to \mathcal R$.
Since $\varphi$ is proper, $Y \to \mathbb P(\varphi_* \mathcal L)$ is a closed immersion.
By \cite[Lemma \href{https://stacks.math.columbia.edu/tag/01R8}{01R8}]{Stacks}, its scheme-theoretic image is $\operatorname{Proj}_X \mathcal S$.

``$\Longleftarrow$''.
It suffices to prove the case where $X$ is affine.
The surjective graded $\mathcal O_X$-algebra homomorphism $\operatorname{Sym}(\varphi_* \mathcal L) \to \mathcal S$ induces a closed embedding $\iota\colon \operatorname{Proj}_X(\mathcal S) \to \mathbb P(\varphi_* \mathcal L)$ over~$X$.
Let $S$ denote the graded $\mathcal O_X(X)$-algebra $\operatorname{Sym} H^0(Y, \mathcal L)$.
There exists a quasi-coherent saturated homogeneous ideal $I$ of $S$ such that $\operatorname{Proj}_X \mathcal S$ is isomorphic over $X$ to $\operatorname{Proj} (S/I)$.
By \cite[Lemma \href{https://stacks.math.columbia.edu/tag/01N1}{01N1}]{Stacks}, the equality $\iota^*(\mathcal O_{\operatorname{Proj}(S)}(1)) = \mathcal O_{\operatorname{Proj}(S/I)}(1)$ holds.
Since $Y = \operatorname{Proj}_X(\mathcal S)$ is covered by affine opens $D_+(f)$ for $f \in H^0(Y, \mathcal L)$, the invertible sheaf $\mathcal L$ is generated by its global sections.
The invertible sheaf $\mathcal O_Y(1)$ is also generated by its global sections, which coincide with the global sections of~$\mathcal L$, showing that $\mathcal O_Y(1) = \mathcal L$.
So, $\mathcal L$ is $\varphi$-very ample.

\labelcref*{itm:scheme localisation}
Follows from \labelcref{itm:scheme ample,itm:scheme very ample} since the morphism $\operatorname{Proj}_X \mathcal R \to \operatorname{Proj}_X \mathcal S$ from the proof of direction ``$\Longrightarrow$'' of part \labelcref{itm:scheme very ample} is induced by the inclusion $\iota\colon \mathcal S \to R$.

\labelcref*{itm:scheme ideal sheaf}
Since $\mathcal L$ is locally principal, for every positive integer~$m$, every stalk $(\mathcal L^{\otimes m})_P$ of $\mathcal L^{\otimes m}$ is generated by $f \cdot 1^{\otimes m}$ over $\mathcal O_{Y, P}$ for some $f \in \mathcal K_{Y, P}$. So, we have a natural map between the stalks of $\mathcal L^{\otimes m}$ and $\mathcal L^m$, sending $f \cdot 1^{\otimes m}$ to~$f$. This induces the map $\mathcal R \to \mathcal O_Y \oplus \bigoplus_{m \in \mathbb Z_{\geq1}} \varphi_* (\mathcal L^m)$.
\end{proof}

The following useful \lcnamecref{thm:very ample} shows that generation in degree 1 implies very ampleness.

\begin{corollary} \label{thm:very ample}
Let $\varphi \colon Y \to X$ be a proper morphism of schemes and let $\mathcal L$ be a $\varphi$-ample invertible $\mathcal O_Y$-module such that $\bigoplus_{m \in \mathbb Z_{\geq0}} \varphi_* (\mathcal L^{\otimes m})$ is generated in degree~$1$. Then, $\mathcal L$ is $\varphi$-very ample.
\end{corollary}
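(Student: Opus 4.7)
The plan is to combine the two characterisations given by \cref{thm:ample}\labelcref{itm:scheme ample,itm:scheme very ample}; the hypothesis of generation in degree~$1$ is precisely the bridge between them. First, I would apply \cref{thm:ample}\labelcref{itm:scheme ample} to the $\varphi$-ample invertible sheaf~$\mathcal L$. This gives an isomorphism
\[
Y \cong \operatorname{Proj}_X \bigoplus_{m \in \mathbb Z_{\geq0}} \varphi_* (\mathcal L^{\otimes m})
\]
over~$X$.

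Next, let $\mathcal A = \bigoplus_{m \in \mathbb Z_{\geq0}} \mathcal A_m$ denote the $\mathbb Z_{\geq0}$-graded $\mathcal O_X$-subalgebra of $\bigoplus_{m \in \mathbb Z_{\geq0}} \varphi_* (\mathcal L^{\otimes m})$ generated by $\varphi_* \mathcal L$ in degree~$1$, as in \cref{thm:ample}\labelcref{itm:scheme very ample}. The crux is the observation that the hypothesis $\bigoplus_{m \in \mathbb Z_{\geq0}} \varphi_* (\mathcal L^{\otimes m})$ being generated in degree~$1$ over $\mathcal O_X = \varphi_* \mathcal O_Y$ says exactly that this subalgebra already fills out the whole algebra, that is, $\mathcal A = \bigoplus_{m \in \mathbb Z_{\geq0}} \varphi_* (\mathcal L^{\otimes m})$.

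Combining these two observations, the isomorphism from \cref{thm:ample}\labelcref{itm:scheme ample} becomes $Y \cong \operatorname{Proj}_X \mathcal A$ over~$X$, which by \cref{thm:ample}\labelcref{itm:scheme very ample} is equivalent to $\mathcal L$ being $\varphi$-very ample. There is no real obstacle here: once one sees that generation in degree~$1$ identifies the two graded algebras appearing in parts \labelcref{itm:scheme ample} and \labelcref{itm:scheme very ample} of \cref{thm:ample}, the corollary follows immediately. The only point that requires a moment's care is that generation in degree~$1$ is meant over $\mathcal O_X$ (equivalently, over the degree-zero part $\varphi_* \mathcal O_Y$), but this is the natural reading given the setup.
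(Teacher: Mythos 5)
Your proof is correct and takes essentially the same route as the paper: apply \cref{thm:ample}\labelcref{itm:scheme ample} to identify $Y$ with $\operatorname{Proj}_X$ of the full pushforward algebra, observe that generation in degree~$1$ makes that algebra coincide with the subalgebra $\mathcal A$ of \cref{thm:ample}\labelcref{itm:scheme very ample}, and conclude.
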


\begin{proof}
Follows by applying \cref{thm:ample} parts \labelcref{itm:scheme ample} and~\labelcref{itm:scheme very ample} and by using that the sheaves of algebras $\mathcal R$ and $\mathcal S$ coincide.
\end{proof}

\subsection{Relating very ampleness to the existence of bubbles}

In this section, we relate very ampleness to \cref{cond:combinatorial very ampleness}.

\begin{proposition}\label{thm:bubble satisfying condition implies not very ample - graded}
    Let $k$ be a positive integer. Then, the following are equivalent:
    \begin{enumerate}[label=\textup{(\roman*)}, ref=\roman*]
        \item\label{itm:condition on all bubbles} for every \bubble{$\bm w$}~$\bm u$ with $d_{\bm w}\ |\ \bm w\cdot\bm u$, the triple $(\bm w, \bm u, k)$ does not satisfy \cref{cond:combinatorial very ampleness},
        \item\label{itm:statement very ampleness graded} the line bundle $\mathcal O_{\mathbb{P}_Q(\bm w)}(kd_{\bm w})$ is very ample over $\operatorname{Spec}Q$.
    \end{enumerate}
\end{proposition}
\begin{proof}
    Part~\labelcref{itm:condition on all bubbles} is equivalent to the following statement:
    \begin{itemize}
        \item[\textdaggerdbl{}] for every $\bm u\in\mathbb{Z}_{\geq 0}^n$  with $d_{\bm w}\ |\ \bm w\cdot\bm u$, the triple $(\bm w, \bm u, k)$ does not satisfy \cref{cond:combinatorial very ampleness}.
    \end{itemize}
    To see this, note that for every vector $\bm u \in \mathbb Z_{\geq0}^n$ and integer $i \in \{1, \ldots, n\}$, if there exists a positive integer $m$ such that
    \[
    \bm u + \frac{(km - \lfloor\frac{\bm w \cdot \bm u}{d_{\bm w}}\rfloor)d_{\bm w}}{w_i} \bm e_i \in m V_{\bm w}^+(k d_{\bm w}),
    \]
    then for all positive integers $r$ and vectors $\bm v_1, \ldots, \bm v_r \in V_{\bm w}(d_{\bm w})$, we have
    \[
    \bm u' + \frac{(km' - \lfloor\frac{\bm w \cdot \bm u'}{d_{\bm w}}\rfloor)d_{\bm w}}{w_i} \bm e_i \in m' V_{\bm w}^+(k d_{\bm w}),
    \]
    where $\bm u' := \bm u + \bm v_1 + \ldots + \bm v_r$ and $m' := m + \lceil\frac{r}{k}\rceil$.

    Now, we show that \textdaggerdbl{} and part~\labelcref{itm:statement very ampleness graded} of the statement are equivalent. For the rest of the proof, let us denote $P := P_{\bm w}$.
    Let moreover $S := Q\oplus\bigoplus_{m\geq 1}P_{kd_{\bm w}}^m$ be the subring of $P^{(kd_{\bm w})}$ generated in degree~$1$.
    According to \cref{thm:ample}\labelcref{itm:scheme localisation}, the line bundle $\mathcal O_{\mathbb{P}_Q(\bm w)}(kd_{\bm w})$ is very ample over $\operatorname{Spec}Q$ if and only if the local rings $S_{(x_i)}$ and $P_{(x_i)}$ are equal for all $i\in\{1,\ldots, n\}$.
    Fix $i \in \{1, \ldots, n\}$.
    For every $\bm u \in \mathbb Z_{\geq0}^n$, let $o_{\bm u}$ denote $\lfloor\frac{\bm w \cdot \bm u}{d_{\bm w}}\rfloor$.
    Let $U$ denote the set of $\bm u\in \mathbb{Z}_{\geq 0}^n$ such that $\bm w\cdot\bm u = o_{\bm u} d_{\bm w}$.
    For every $\bm u \in U$, let $p_{\bm u}$ denote the element $\displaystyle\frac{\bm x^{\bm u}}{x_i^{o_{\bm u}d_{\bm w}/w_i}}$ of $P_{(x_i)}$.
    Note that $P_{(x_i)}$ is generated as a $Q$-module by the elements $p_{\bm u}$ where $\bm u \in U$.
    The equality $S_{(x_i)} = P_{(x_i)}$ is equivalent to the containment $p_{\bm u} \in S_{(x_i)}$ for all $\bm u \in U$.
    We have $p_{\bm u} \in S_{(x_i)}$ if and only if there exist a positive integer $m$ and vectors $\bm a_1,\ldots, \bm a_m\in V_{\bm w}(kd_{\bm w})$ such that the following equality holds:
    \[\frac{\bm x^{\bm u}}{x_i^{o_{\bm u}d_{\bm w}/w_i}} = \frac{\bm x^{\bm a_1}\bm x^{\bm a_2}\cdots \bm x^{\bm a_m}}{x_i^{kmd_{\bm w}/w_i}}\in P_{(x_i)}.\]
    The previous equality is equivalent to:
    \[\bm u+\frac{(km-o_{\bm u})d_{\bm w}}{w_i}\bm e_i = \bm a_1+\ldots+\bm a_m.\]
    Thus, the tuple $(\bm w, \bm u, k)$ satisfies \cref{cond:combinatorial very ampleness} for the index~$i$ if and only if $p_{\bm u}$ does not belong to $S_{(x_i)}$.
    The result follows.
\end{proof}

\begin{proposition}\label{thm:bubble satisfying condition implies not very ample - Rees}Let $k$ be a positive integer. Then, the following are equivalent:
    \begin{enumerate}[label=\textup{(\roman*)}, ref=\roman*]
        \item for every \bubble{$\bm w$}~$\bm u$, the triple $(\bm w, \bm u, k)$ does not satisfy \cref{cond:combinatorial very ampleness},
        \item the line bundle $\mathcal O_{\operatorname{Proj} R_{\bm w}}(kd_{\bm w})$ is very ample over $\mathbb{A}^n_Q$.
    \end{enumerate}
\end{proposition}

\begin{proof}
    Similar to the proof of \cref{thm:bubble satisfying condition implies not very ample - graded}.
\end{proof}

We have shown how to relate the obstruction of generation in degree 1 with the combinatorics of bubbles.
In this paper, all the results on generation in degree 1, respectively not very ampleness, rely on \cref{cor:bubble and generation in degree 1 - graded,cor:bubble and generation in degree 1 - Rees}, respectively on \cref{thm:bubble satisfying condition implies not very ample - graded,thm:bubble satisfying condition implies not very ample - Rees}.
In the next section, we discuss how the existence of bubbles and \cref{cond:combinatorial generation,cond:combinatorial very ampleness} change under the equivalences defined in \cref{def:equivalences}.

\section{Bubbles and very ampleness under equivalences}

\subsection{Existence of bubbles under equivalences}

We show that the existence of bubbles with weight divisible by~$d$ (resp.\ any bubbles) is stable under weak (resp.\ strong) equivalence:

\begin{lemma} \label{lem:simplification bubbles}
Let $n, m, k$ be positive integers. Let $\bm v \in \mathbb Z_{\geq1}^m$. Let $\bm t$ be a \bubble{$\bm v$}. Then, both of the following hold:
\begin{enumerate}[label=\textup{(\alph*)}, ref=\alph*]
\item \label{itm:weakly} if $d_{\bm v}$ divides $\bm v \cdot \bm t$ and $\bm v$ can be obtained from $\bm w \in \mathbb Z_{\geq1}^n$ using operations (1)–(5) of \cref{def:equivalences} and the operation of removing a weight dividing the least common multiple of the other weights, then there exists a \bubble{$\bm w$} $\bm u$ with $\bm w \cdot \bm u = \frac{\bm v \cdot \bm t}{d_{\bm v}} d_{\bm w}$,
\item \label{itm:strongly} if $\bm v$ can be obtained from $\bm w \in \mathbb Z_{\geq1}^n$ using operations (1)–(4) of \cref{def:equivalences} and the operation of removing a weight dividing the least common multiple of the other weights, then there exists a \bubble{$\bm w$} $\bm u$ with $\bm w \cdot \bm u = \frac{\bm v \cdot \bm t}{d_{\bm v}} d_{\bm w}$.
\end{enumerate}
\end{lemma}

\begin{proof}
Denote $\bm w = (w_1, \ldots, w_n)$. Part~\labelcref*{itm:strongly} follows from the implications and equivalences below:
\begin{enumerate}
    \item if $\sigma$ is a permutation on $n$ symbols, then $\bm u$ is a \bubble{$\bm w$} if and only if $\sigma(\bm u)$ is a \bubble{$\sigma(\bm w)$},
    \item if $\widehat{\bm w} := (w_1, \ldots, w_{n-1}, w_n, w_n)$, then $\widehat{\bm u}:=(u_1, \ldots, u_{n-1}, u_n, u_{n+1})$ is a \bubble{$\widehat{\bm w}$} if and only if $(u_1, \ldots, u_{n-1}, u_n + u_{n+1})$ is a \bubble{$\bm w$},
    \item if $\widehat{\bm w} = (g w_1, \ldots, g w_n)$ for some positive integer~$g$, then $\bm u$ is a \bubble{$\bm w$} if and only if $\bm u$ is a \bubble{$\widehat{\bm w}$},
    \item if $w_i = \operatorname{lcm}(\bm w)$, then every \bubble{$\bm w$} $\bm u$ satisfies $u_i = 0$,
    \item[$(*)$] if $\widehat{\bm w} = (w_1, \ldots, w_n, f)$ with $f \mid d_{\bm w}$ and $\bm u$ is a \bubble{$\bm w$}, then $\kappa_{n+1}(\bm u)$ is a \bubble{$\widehat{\bm w}$},
\end{enumerate}
To see~\labelcref*{itm:weakly}, we additionally note the following equivalence and implication:
\begin{enumerate}[resume]
    \item if $\widehat{\bm w} = (g w_1, \ldots, g w_{n-1}, w_n)$ for some positive integer $g$ coprime to~$w_n$, $n$ is at least~$2$ and $\gcd(\bm w)$ is equal to~$1$, then both of the following hold:
    \begin{enumerate}[label=\textup{(\roman*)}, ref=\roman*]
        \item $\bm u$ is a \bubble{$\bm w$} if and only if $(u_1, \ldots, u_{n-1}, g u_n)$ is a \bubble{$\widehat{\bm w}$},
        \item if $\bm u'$ is a \bubble{$\widehat{\bm w}$} satisfying $d_{\widehat{\bm w}} \mid \widehat{\bm w} \cdot \bm u'$, then $g \mid u_n'$.\qedhere
    \end{enumerate}
\end{enumerate}
\end{proof}

If we also require \cref{cond:combinatorial generation} or \cref{cond:combinatorial very ampleness}, then we have weaker statements:

\begin{lemma} \label{lem:condition}
Let $n, m, k$ be positive integers. Let $\bm v \in \mathbb Z_{\geq1}^m$. Let $\bm t$ be a \bubble{$\bm v$} such that $(\bm v, \bm t, k)$ satisfies \cref{cond:combinatorial generation}, respectively \cref{cond:combinatorial very ampleness}. Then, both of the following hold:
\begin{enumerate}[label=\textup{(\alph*)}, ref=\alph*]
\item \label{itm:condition weakly} if $d_{\bm v}$ divides $\bm v \cdot \bm t$ and $\bm v$ can be obtained from $\bm w \in \mathbb Z_{\geq1}^n$ using operations (1)–(3) and (5) of \cref{def:equivalences} and the operation of removing a weight dividing the least common multiple of the other weights, then there exists a \bubble{$\bm w$} $\bm u$ with $\bm w \cdot \bm u = \frac{\bm v \cdot \bm t}{d_{\bm v}} d_{\bm w}$ such that $(\bm w, \bm u, k)$ satisfies \cref{cond:combinatorial generation}, respectively \cref{cond:combinatorial very ampleness},
\item \label{itm:condition strongly} if $\bm v$ can be obtained from $\bm w \in \mathbb Z_{\geq1}^n$ using operations (1)–(3) of \cref{def:equivalences} and the operation of removing a weight dividing the least common multiple of the other weights, then there exists a \bubble{$\bm w$} $\bm u$ with $\bm w \cdot \bm u = \frac{\bm v \cdot \bm t}{d_{\bm v}} d_{\bm w}$ such that $(\bm w, \bm u, k)$ satisfies \cref{cond:combinatorial generation}, respectively \cref{cond:combinatorial very ampleness}.
\end{enumerate}
\end{lemma}

\begin{proof}
Similarly to the proof of \cref{lem:simplification bubbles}, consider each of the operations separately.
\end{proof}

For bubbles with positive coordinates, we obtain even weaker statements:

\begin{lemma} \label{lem:positive equivalence}
Let $n, m, k$ be positive integers. Let $\bm w \in \mathbb Z_{\geq1}^n$. Let $\bm u$ be a \bubble{$\bm w$} with positive coordinates. Then, both of the following hold:
\begin{enumerate}[label=\textup{(\alph*)}, ref=\alph*]
\item if $d_{\bm w}$ divides $\bm w \cdot \bm u$ and $\bm v\in\mathbb{Z}_{\geq1}^{m}$ can be obtained from $\bm w\in\mathbb{Z}_{\geq1}^n$ using operations (1), (3) and (5) of \cref{def:equivalences}, the operation of removing a weight equal to another weight (this is one direction of operation~(2)) and the operation of removing a weight equal to the least common multiple of the other weights (this is one direction of operation~(4)), then there exists a \bubble{$\bm v$} $\bm t$ with positive coordinates satisfying $\bm v \cdot \bm t = \frac{\bm w \cdot \bm u}{d_{\bm w}} d_{\bm v}$ and moreover, if $(\bm w, \bm u, k)$ satisfies \cref{cond:combinatorial generation}, respectively \cref{cond:combinatorial very ampleness}, then we can choose the $\bm v$ and $\bm t$ such that $(\bm v, \bm t, k)$ satisfies \cref{cond:combinatorial generation}, respectively \cref{cond:combinatorial very ampleness},
\item if $\bm v\in\mathbb{Z}_{\geq1}^{m}$ can be obtained from $\bm w\in\mathbb{Z}_{\geq1}^n$ using operations (1) and (3) of \cref{def:equivalences}, the operation of removing a weight equal to another weight (this is one direction of operation~(2)) and the operation of removing a weight equal to the least common multiple of the other weights (this is one direction of operation~(4)), then there exists a \bubble{$\bm v$} $\bm t$ with positive coordinates satisfying $\bm v \cdot \bm t = \frac{\bm w \cdot \bm u}{d_{\bm w}} d_{\bm v}$ and moreover, if $(\bm w, \bm u, k)$ satisfies \cref{cond:combinatorial generation}, respectively \cref{cond:combinatorial very ampleness}, then we can choose the $\bm v$ and $\bm t$ such that $(\bm v, \bm t, k)$ satisfies \cref{cond:combinatorial generation}, respectively \cref{cond:combinatorial very ampleness}.
\end{enumerate}
\end{lemma}

\begin{proof}
Similar to \cref{lem:simplification bubbles,lem:condition}.
\end{proof}

Both weak and strong equivalence classes have a unique minimal representative.

\begin{lemma} \label{lem:representative}
Let $n$ be a positive integer and let $\bm w\in\mathbb{Z}_{\geq1}^n$. Then, both of the following hold:
\begin{enumerate}[label=\textup{(\alph*)}, ref=\alph*]
\item \label{itm:weak representative} the vector $\bm w \in \mathbb Z_{\geq1}^n$ is weakly equivalent to a unique strictly increasing well-formed vector $\bm v \in \mathbb Z_{\geq1}^m$ such that $m = 1$ or $m>1$ and $v_m \neq \operatorname{lcm}(v_1, v_2, \ldots, v_{m-1})$,
\item \label{itm:strong representative} the vector $\bm w \in \mathbb Z_{\geq1}^n$ is strongly equivalent to a unique strictly increasing vector $\bm v \in \mathbb Z_{\geq1}^{m}$ with $\gcd(\bm v) = 1$ such that $m = 1$ or $m>1$ and $v_m \neq \operatorname{lcm}(v_1, v_2, \ldots, v_{m-1})$.
\end{enumerate}
\end{lemma}

\begin{proof}
We say two vectors in $\mathbb Z_{\geq0}^n \setminus \{\bm 0\}$ are \emph{permutation-equivalent} if they are the same up to permutation. We say two vectors of nonnegative integers, neither equal to a zero vector, are \emph{permutation-zero-equivalent} if they are the same up to permutation and removing the zero entries.
A \emph{reducer} is one of the following operators on vectors $\bm w \in \mathbb Z_{\geq0}^n \setminus \{\bm 0\}$, where $i \in \{1, \ldots, n\}$ and $g$ and $h$ are positive integers such that $g$ divides $\gcd(\bm w)$ and $h$ divides $\gcd(\pi_i(\bm w))$:
\begin{align*}
Z_i\colon \bm w & \mapsto \mleft\{\begin{aligned}
    & (\kappa_i \circ \pi_i)(\bm w) && \begin{aligned}
        & \text{if there exists $j \neq i$ such that $w_i = w_j$ or $i$}\\
        & \text{is the unique index such that $w_i$ is positive}\\
        & \text{and equal to the least common multiple}\\
        & \text{of all the other positive entries},
    \end{aligned}\\
    & \bm w && \text{otherwise},
\end{aligned}\mright.\\
G_g\colon \bm w & \mapsto \frac{\bm w}{g},\\
W_{i, h}\colon \bm w & \mapsto \mleft\{\begin{aligned}
    & \bm w && \text{if $w_i = 0$ or $\pi_i(\bm w) = \bm 0$},\\
    & \kappa_i\mleft(\frac{\gcd(\bm w)}{h}\pi_i(\bm w)\mright) + w_i \bm e_i && \text{otherwise}.
\end{aligned}\mright.
\end{align*}
We denote also $G := G_{\gcd(\bm w)}$ and $W_{i} := W_{i, \gcd(\pi_i(\bm w))}$.
Reducers together with their inverses generate every operation (1)--(5) in \cref{def:equivalences} up to permutation-zero-equivalence.

A \emph{column} is a sequence $\bm C := (\bm C_1, \ldots, \bm C_r)$ of pairwise different elements of $\mathbb Z_{\geq0}^n \setminus \{\bm 0\}$ such that both of the following hold:
\begin{enumerate}
\item $\bm C_r$ is permutation-zero-equivalent to a strictly increasing well-formed vector $\bm v \in \mathbb Z_{\geq1}^m$ for some positive integer $m$ such that $m = 1$ or $v_m \neq \operatorname{lcm}(v_1, v_2, \ldots, v_{m-1})$,
\item for every $k \in \{1, \ldots, r-1\}$, there exists a unique reducer $O_k$ such that $\bm C_{k+1} = O_k(\bm C_k)$ and $O_k$ is one of $Z_i, G, W_i$ for some~$i$.
\end{enumerate}
Given a column $\bm C := (\bm C_1, \ldots, \bm C_r)$, for every $k \in \{1, \ldots, r-1\}$, the \emph{$k$th reducer} of $\bm C$ is the unique reducer $O_k$ above.

\labelcref*{itm:weak representative} For every vector $\bm w \in \mathbb Z_{\geq0}^n \setminus \{\bm 0\}$, there exists a column $\bm C$ with $\bm C_1 = \bm w$.
Therefore, it suffices to show that for every $\bm w \in \mathbb Z_{\geq0}^n \setminus \{\bm 0\}$, given two columns $\bm C := (\bm C_1, \ldots, \bm C_r)$ and $\bm C' := (\bm C_1', \ldots, \bm C_{r'}')$ with $\bm C_1 = \bm C_1' = \bm w$, the vectors $\bm C_r$ and $\bm C_{r'}'$ are permutation-equivalent.
We prove this by induction on $r + r'$.
Since the reducer $G$ commutes with every reducer $Z_i$ and~$W_i$, it suffices to consider the case where $\gcd(\bm w) = 1$.
It follows that each reducer in both of the columns is either $Z_i$ or $W_i$ for some~$i$.

First, we prove the case where there exist $\alpha < \beta$ such that $w_{\alpha} = w_{\beta} > 0$.
If $\bm t \in \mathbb Z_{\geq0}^n \setminus \{\bm 0\}$ is such that $t_\alpha = t_\beta > 0$, then we have $Z_i(\bm t)_{\alpha} = Z_i(\bm t)_{\beta} > 0$ for every $i \notin \{\alpha, \beta\}$ and we have $W_j(\bm t)_{\alpha} = W_j(\bm t)_{\beta} > 0$ for every~$j\in\{1,\ldots, n\}$.
Therefore, there exists $k \in \{1, \ldots, r-1\}$ and $i \in \{\alpha, \beta\}$ such that the $k$th reducer of $\bm C$ is~$Z_i$.
Without loss of generality, we let $k$ be the least possible.
The operation $Z_i$ commutes with each $l$th reducer in $\bm C$ for every $l < k$.
Therefore, without loss of generality, the first reducer for both of the columns is $Z_i$ for some $i \in \{\alpha, \beta\}$.
Since $Z_{\alpha}(\bm w)$ and $Z_{\beta}(\bm w)$ are permutation-equivalent, the claim follows by induction.

Next, we prove the case where $\bm w$ has at least two positive entries and there exists $\alpha$ such that $w_\alpha$ is positive and $\gcd(\pi_\alpha(\bm w)) > 1$.
Let $\bm t \in \mathbb Z_{\geq0}^n \setminus \{\bm 0\}$ be such that $\bm t$ has at least two positive entries, $t_\alpha$ is positive, $\gcd(\bm t) = 1$ and $\gcd(\pi_\alpha(\bm t)) > 1$.
Since $\gcd(\gcd(\pi_\alpha(\bm t)), \gcd(\pi_j(\bm t))) = 1$ for all $j \neq \alpha$, we find that $\gcd(\pi_\alpha(W_j(\bm t))) = \gcd(\pi_\alpha(\bm t))$ and $W_j(W_\alpha(\bm t)) = W_\alpha(W_j(\bm t))$.
Note that for all $i \neq \alpha$, we have $t_i \neq t_\alpha$.
Moreover, if $t_i$ is positive and equal to the least common multiple of all the other positive entries for some $i \neq \alpha$, then $\bm t$ has necessarily at least three positive entries.
Therefore, for all $i \neq \alpha$, we find that $\gcd(\pi_\alpha(Z_i(\bm t))) = \gcd(\pi_\alpha(\bm t))$ and $Z_i(W_\alpha(\bm t)) = W_\alpha(Z_i(\bm t))$.
It follows that there exists $k \in \{1, \ldots, r-1\}$ such that the $k$th reducer of $\bm C$ is $W_\alpha$ and moreover that for every $l < k$, this reducer $W_\alpha$ commutes with the $l$th reducer of~$\bm C$.
As before, the claim now follows by induction.

Finally, we prove the case where there $\bm w$ has at least two positive entries and $\alpha$ is the unique index such that $w_\alpha$ is positive and equal to the least common multiple of all the other positive entries.
If $\bm t \in \mathbb Z_{\geq0}^n \setminus \{\bm 0\}$ is such that $\bm t$ has at least two positive entries and $t_\alpha$ is positive and equal to the least common multiple of all the other positive entries, then for all $i$ with $t_i \neq t_\alpha$ and all~$j$, both $Z_i(\bm t)$ and $W_j(\bm t)$ still have at least two positive entries and $t_\alpha$ is still positive and equal to the least common multiple of the other positive entries.
If $t_i = t_\alpha$, then $Z_i(\bm t)$ is permutation-equivalent to $Z_\alpha(\bm t)$.
So, without loss of generality, there exists $k \in \{1, \ldots, r-1\}$ such that $\bm C_{k+1} = Z_\alpha(\bm C_k)$.
Without loss of generality, $k$ is such that for every $l < k$, if the $l$th reducer of $\bm C$ is $Z_i$ for some~$i$, then $(v_l)_i \neq (v_l)_\alpha$.
It follows that this $k$th reducer $Z_\alpha$ of $\bm C$ commutes with each $l$th reducer in the column where $l < k$.
As above, the claim now follows by induction.

\labelcref*{itm:strong representative} Obvious.
\end{proof}

We use \cref{lem:mild representative,cor:mild equivalence} to prove \cref{lem:dispatch essential support}.

\begin{lemma} \label{lem:mild representative}
Let $n$ be a positive integer and let $\bm w\in\mathbb{Z}_{\geq1}^n$. Then, the vector $\bm w $ is equivalent under the operations (1)–(3) and (5) of \cref{def:equivalences} to a unique strictly increasing well-formed vector $\bm v \in \mathbb Z_{\geq1}^m$ for some positive integer $m\leq n$.
\end{lemma}

\begin{proof}
Similar to \cref{lem:representative}\labelcref{itm:weak representative}.
\end{proof}

\begin{corollary} \label{cor:mild equivalence}
The vector $\bm v$ in \cref{lem:representative}\labelcref{itm:weak representative} (resp. $\bm v'$ in \cref{lem:representative}\labelcref{itm:strong representative}) can be reached by applying a sequence of operations (1)–(5) (resp. (1)--(4)) of \cref{def:equivalences} to $\bm w$ such that every operation in the sequence, except possibly the last, is one of (1)–(3) and (5) (resp. (1)--(3)).
\end{corollary}

\begin{proof}
For weak equivalence, this follows from \Cref{lem:mild representative}. The other case is easy.
\end{proof}

\begin{lemma}\label{lem:dispatch essential support}
Let $n$ be a positive integer. Then, both of the following hold:
\begin{enumerate}[label=\textup{(\alph*)}, ref=\alph*]
\item \label{itm:mild equivalence 1} if $\bm w \in \mathbb Z_{\geq1}^n$ is weakly equivalent to $(1) \in \mathbb Z^{1}$, then every nonempty subsequence of $\bm w$ is equivalent to $(1) \in \mathbb Z^{1}$ under the operations (1)–(3) and (5) of \cref{def:equivalences}.
\item \label{itm:weak lcm} If $\bm w \in \mathbb Z_{\geq1}^n$ is not weakly equivalent to the vector $(1) \in \mathbb Z^{1}$, then every nonempty subsequence $\bm w'$ of $\bm w$ that is weakly equivalent to $\bm w$ satisfies $d_{\bm w'} = d_{\bm w}$.
\end{enumerate}
\end{lemma}

\begin{proof}
Let $\bm v$ be the unique vector from \cref{lem:representative}\labelcref{itm:weak representative}.
Let $\bm w'$ be a subsequence of $\bm w$.
By \cref{cor:mild equivalence}, there exists a sequence of operations $O_1, \ldots, O_r$, each of type (1)–(3) or (5), such that applying them to $\bm w$ to get a vector $\overline{\bm w}$ and then possibly removing one weight that is equal to the least common multiple of the other weights, we reach~$\bm v$.
Without loss of generality, $\overline{\bm w}$ has no repeating weights.
Without loss of generality, no operation $O_i$ removes a weight from $\bm w'$ that is not a repeated weight in~$\bm w'$.
For every $i\in\{1,\ldots, r\}$, let $(O_1 \circ \cdots \circ O_i)(\bm w')$ denote the corresponding subsequence of $(O_1 \circ \cdots \circ O_i)(\bm w)$.
Note that $(O_1 \circ \cdots \circ O_i)(\bm w')$ is weakly equivalent to $\bm w'$ for every~$i$.

\labelcref*{itm:mild equivalence 1} We necessarily have $\overline{\bm w} = (1) \in \mathbb Z^1$. Therefore, $(O_1 \circ \cdots \circ O_r)(\bm w') = (1) \in \mathbb Z^1$.

\labelcref*{itm:weak lcm}
If $\bm w'$ is weakly equivalent to $\bm w$, then $(O_1 \circ \cdots \circ O_r)(\bm w')$ is equal to $\overline{\bm w}$ or $\bm v$. Since $\bm w$ is not weakly equivalent to~$(1) \in \mathbb Z^1$, for every~$i$, the length of $(O_1 \circ \cdots \circ O_i)(\bm w')$ is at least two.
Therefore, the ratios $\frac{\operatorname{lcm}{\bm w}}{\operatorname{lcm}{\bm w'}}$ and $\frac{\operatorname{lcm}((O_1 \circ \cdots \circ O_i)(\bm w))}{\operatorname{lcm}((O_1 \circ \cdots \circ O_i)(\bm w'))}$ are equal for every~$i$.
Since $\operatorname{lcm}{((O_1 \circ \cdots \circ O_r)(\bm w))} = d_{\bm v} = \operatorname{lcm}((O_1 \circ \cdots \circ O_r)(\bm w'))$, this proves the \lcnamecref{lem:dispatch essential support}.
\end{proof}

\subsection{Very ampleness and generation in degree 1 under equivalences}

\Cref{lem:simplification bubbles,lem:condition}, together with \cref{thm:bubble and generation in degree 1 - graded,thm:bubble and generation in degree 1 - Rees,thm:bubble satisfying condition implies not very ample - graded,thm:bubble satisfying condition implies not very ample - Rees}, almost give the invariance of generation in degree 1 and very ampleness under weak and strong equivalence.
The only missing parts are the following:

\begin{question} \label{que:adding d}
Let $n$ and $k$ be positive integers. Let $\bm w' \in \mathbb Z_{\geq1}^{n+1}$ and $\bm w \in \mathbb Z_{\geq1}^n$ satisfy $\bm w' = (w_1, w_2, \ldots, w_n, \operatorname{lcm}(\bm w))$.
\begin{enumerate}[label=\textup{(\alph*)}, ref=\alph*]
\item \label{itm:add d ring graded} If $P_{\bm w}^{(kd_{\bm w})}$ is generated in degree 1, then is $P_{\bm w'}^{(kd_{\bm w'})}$ necessarily generated in degree~1?
\item \label{itm:add d ring Rees} If $R_{\bm w}^{(kd_{\bm w})}$ is generated in degree 1, then is $R_{\bm w'}^{(kd_{\bm w'})}$ necessarily generated in degree~1?
\item \label{itm:add d line bundle graded} If $\mathcal O_{\mathbb{P}_Q(\bm w)}(kd_{\bm w})$ is very ample over $\operatorname{Spec}Q$, then is $\mathcal O_{\mathbb{P}_Q(\bm w')}(kd_{\bm w'})$ necessarily very ample over $\operatorname{Spec}Q$?
\item \label{itm:add d line bundle Rees} If $\mathcal O_{\operatorname{Proj} R_{\bm w}}(kd_{\bm w})$ is very ample over $\mathbb{A}^n_Q$, then is $\mathcal O_{\operatorname{Proj} R_{\bm w'}}(kd_{\bm w'})$ necessarily very ample over $\mathbb{A}^{n+1}_Q$?
\end{enumerate}
\end{question}

\begin{remark} \label{rem:positive answer}
By \cref{cor:bubble and generation in degree 1 - graded,cor:bubble and generation in degree 1 - Rees}, the answers to \cref{que:adding d} parts \labelcref{itm:add d ring graded} and~\labelcref{itm:add d ring Rees} is \emph{yes} if $k = 1$.
\end{remark}

We describe a situation where generation in degree 1 and very ampleness are equivalent.

\begin{proposition} \label{prop:equivalence when one weight is d}
Let $n$ be a positive integer and let $k \in \{1, 2\}$.
Let $\bm w \in \mathbb Z_{\geq1}^n$ satisfy $w_n = \operatorname{lcm}(\bm w)$.
Let $S$ be one of $P_{\bm w}$ or $R_{\bm w}$.
Then, $S^{(kd_{\bm w})}$ is generated in degree 1 if and only if $\mathcal O_{\operatorname{Proj} S}(kd_{\bm w})$ is very ample.
\end{proposition}

\begin{proof}
Let $\bm u$ be a \bubble{$\bm w$}.
If $(\bm w, \bm u, k)$ satisfies \cref{cond:combinatorial very ampleness}, then by \cref{rem:conditions}\labelcref{itm:trivial}, $(\bm w, \bm u, k)$ satisfies \cref{cond:combinatorial generation}.
For the rest of the proof, let $(\bm w, \bm u, k)$ satisfy \cref{cond:combinatorial generation}.
By \cref{rem:conditions}\labelcref{itm:obstruction weight}, we have $\bm w \cdot \bm u \geq (k+1)d_{\bm w}$.
Denote $m_0 := \lceil \lfloor\frac{\bm w \cdot \bm u}{d_{\bm w}}\rfloor / k \rceil$ and $r_0 := m_0 k - \lfloor\frac{\bm w \cdot \bm u}{d_{\bm w}}\rfloor$.
Then, $\bm u + r_0 \bm e_n \notin m_0 V_{\bm w}^+(kd_{\bm w})$.
\Cref{cond:combinatorial very ampleness} is satisfied for all $m < m_0$ by \cref{rem:conditions}\labelcref{itm:small m}.
Assume there exists an integer $m > m_0$ such that $\bm u + r \bm e_n \in m V_{\bm w}^+(kd_{\bm w})$, where $r := mk - \lfloor\frac{\bm w \cdot \bm u}{d_{\bm w}}\rfloor$.
To prove the \lcnamecref{prop:equivalence when one weight is d}, we will derive a contradiction.
Without loss of generality,
$m$ is the smallest integer satisfying the above.
We find that there exist $\bm a_1, \ldots, \bm a_m \in V_{\bm w}^+(k d_{\bm w})$ such that
\[
\bm u + r \bm e_n = \bm a_1 + \ldots + \bm a_m,
\]
where for every $i \in \{1, \ldots, m\}$, we have $(a_i)_n \leq k - 1$.
Since $m > m_0$, we find $r \geq k$.
This proves the case $k = 1$.
From now on, let $k \geq 2$.
The inequality $\bm w \cdot \bm u \geq (k+1)d_{\bm w}$ implies that $m_0 \geq 2$.
Therefore, $m \geq 3$.

If there exist distinct $i, j \in \{1, \ldots, m\}$ such that $(a_i)_n + (a_j)_n \geq k$, then defining
\[
\bm a_l' := \mleft\{\begin{aligned}
    & \bm a_l && \text{if $l \notin \{i, j\}$},\\
    & \bm a_i + \bm a_j - k \bm e_n && \text{if $l = i$},\\
    & k \bm e_n && \text{otherwise},
\end{aligned}\mright.
\]
we find that
\[
\bm u + (r-k)\bm e_n = \sum_{l \in \{1, \ldots, m\} \setminus \{j\}} \bm a_l' \in (m-1)V_{d_{\bm w}}^+(kd_{\bm w}),
\]
contradicting the minimality of~$m$. This proves the case $k = 2$.
\end{proof}

\begin{remark}
By \cref{prop:equivalence when one weight is d,rem:positive answer}, in the case $k = 1$, \cref{que:adding d} parts \labelcref{itm:add d line bundle graded} and \labelcref{itm:add d line bundle Rees} are respectively equivalent to \cref{que:very ample and generation equivalence - graded,que:very ample and generation equivalence - Rees}.
\end{remark}

\subsection{Subsequences} \label[sec]{sec:inductive}

Throughout, let $n$ and $\bm w\in\mathbb{Z}_{\geq1}^n$ be positive integers. In this section, we relate the existence of \bubbles{$\bm w$} to the existence of \bubbles{$\bm w'$}, where $\bm w'$ is a subsequence of~$\bm w$.

\begin{lemma} \label{lem:projecting zeros}
Let $I$ be a proper subset of $\{1, \ldots, n\}$ such that $d_{\bm w} = d_{\pi_I(\bm w)}$.
Let $\bm u \in \mathbb Z_{\geq0}^n$ be such that $u_i$ is zero for all $i \in I$.
Then, $\bm u$ is a \bubble{$\bm w$} if and only if $\pi_I(\bm u)$ is a \bubble{$\pi_I(\bm w)$}.
Moreover, for every positive integer integer~$k$, if $(\pi_I(\bm w), \pi_I(\bm u), k)$ satisfies \cref{cond:combinatorial generation}, respectively \cref{cond:combinatorial very ampleness}, then so does $(\bm w, \bm u, k)$.
\end{lemma}

\begin{proof}
Clear from the definition of a bubble and from the statements of \cref{cond:combinatorial generation,cond:combinatorial very ampleness}.
\end{proof}

\begin{remark}
We make the following remarks on \cref{lem:projecting zeros}.
\begin{enumerate}[label=\textup{(\alph*)}, ref=\alph*]
\item The condition that $d_{\bm w} = d_{\pi_I(\bm w)}$ is necessary. For example, there are no \bubbles{$\bm w$} for $\bm w := (1, 6, 10, 15, 60)$ but there is a \bubble{$\bm w'$} for $\bm w' := (1, 6, 10, 15)$.
\item The condition that $u_i$ is zero for all $i \in I$ is necessary. For example, there are no \bubbles{$\bm w'$} for $\bm w' := (6, 10, 15)$ but there is a \bubble{$\bm w$} for $\bm w := (1, 6, 10, 15)$.
\end{enumerate}
\end{remark}

\Cref{thm:coordinates are positive} below is an extension of \cref{lem:projecting zeros} to the case where $d_{\bm w}$ might not equal $d_{\pi_I(\bm w)}$.
This enables us to reduce to the case of bubbles with positive coordinates and use the strong bound \cref{thm:bound on u when all entries are positive}.

\begin{lemma} \label{thm:coordinates are positive}
    Let $\bm u$ be a \bubble{$\bm w$} and $I$ a subset of $\{1, \ldots, n\}$ such that $u_i$ is zero for all $i \in I$.
    Then, there exists a \bubble{$\pi_I(\bm w)$} $\bm u' \preceq \pi_I(\bm u)$ satisfying $\pi_I(\bm w) \cdot \bm u' \geq \bm w \cdot \bm u + d_{\pi_I(\bm w)} - d_{\bm w}$.
\end{lemma}

\begin{proof}
The case $d_{\pi_I(\bm w)} = d_{\bm w}$ follows from \cref{lem:projecting zeros}.
For the rest of the proof, let $d_{\bm w} = m d_{\pi_I(\bm w)}$ for some integer $m \geq 2$. In this case, $d_{\pi_I(\bm w)} - d_{\bm w} = -(m-1) d_{\pi_I(\bm w)}$.

Seeking a contradiction, assume there is no \bubble{\(\pi_I(\bm w)\)} $\bm u' \preceq \pi_I(\bm u)$ such that the inequality $\pi_I(\bm w) \cdot \bm u' \geq \bm w \cdot \bm u - (m-1) d_{\pi_I(\bm w)}$ holds. We recursively construct $\bm v_1, \ldots, \bm v_m \in \mathbb Z_{\geq0}^{n-\abs{I}}$. Let $\bm u_1 := \pi_I(\bm u)$ and assume we have defined $\bm u_k \in \mathbb Z_{\geq0}^{n-\abs{I}}$ with
\[
\pi_I(\bm w) \cdot \bm u_k = \bm w \cdot \bm u - (k-1) d_{\pi_I(\bm w)}
\]
for some $k \in \{1, \ldots, m\}$.
Since $\pi_I(\bm w) \cdot \bm u_k$ is at least $\bm w \cdot \bm u - (m-1) d_{\pi_I(\bm w)}$, there exists $\bm v_k \in \mathbb Z_{\geq0}^{n-\abs{I}}$ such that $\bm v_k \preceq \bm u_k$ and $\pi_I(\bm w) \cdot \bm v_k = d_{\pi_I(\bm w)}$. Define $\bm u_{k+1} := \pi_I(\bm u) - \bm v_k$.
This way, we can define $\bm v_1, \ldots, \bm v_m$ such that $\bm v := \kappa_I(\bm v_1 + \ldots + \bm v_m) \preceq \bm u$ and $\bm w \cdot \bm v = d_{\bm w}$. This contradicts $\bm u$ being a \bubble{\(\bm w\)}.
\end{proof}

\begin{corollary} \label{cor:coordinates are positive}
    Let $I$ be a proper subset of $\{1, \ldots, n\}$ such that $d_{\bm w} > d_{\pi_I(\bm w)}$.
    If there exists a \bubble{$\bm w$} $\bm u$ such that $u_i = 0$ for all $i \in I$, then there exists a \bubble{$\pi_I(\bm w)$} $\bm u'$ satisfying $\pi_I(\bm w) \cdot \bm u' \geq 3 d_{\pi_I(\bm w)}$.
\end{corollary}

\begin{proof}
Follows from \cref{thm:coordinates are positive} and the inequalities $\bm w \cdot \bm u \geq 2d_{\bm w} \geq d_{\bm w} + 2d_{\pi_I(\bm w)}$.
\end{proof}

\section{Tools for showing nonexistence of bubbles} \label[sec]{sec:tools}

Our goal is to describe as accurately as possible when the line bundles of the previous sections are very ample. In particular, we are interested in determining when a given weight vector has no bubbles. According to \cref{thm:bubble and generation in degree 1 - graded,thm:bubble and generation in degree 1 - Rees}, the latter implies that there are no obstructions for generation in degree 1 for the Veronese subrings we have defined. For what follows, we fix again a positive integer $n$ and $\bm w := (w_1, \ldots, w_n)\in\mathbb{Z}^n_{\geq1}$.

\subsection{Nonexistence of big bubbles} \label[sec]{sec:nonexistence}

The main results of this section are \cref{thm:bound n-1,thm:maximum all weights}. We show that the coordinates of a \bubble{$\bm w$} are bounded, and we deduce upper bounds for the scalar product of $\bm w$ and any \bubble{$\bm w$}. We start by observing the following.

\begin{lemma} \label{lem:firstupperbound}
    If \(\bm u\) is a \bubble{\(\bm w\)}, then \(u_i \leq \dfrac{d_{\bm w}}{w_i} - 1\) for every \(i \in \{1, \ldots, n\}\).
\end{lemma}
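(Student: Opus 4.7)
The plan is to argue by contradiction. Suppose $\bm u$ is a $\bm w$-bubble and fix some index $i \in \{1, \ldots, n\}$ with $u_i \geq d_{\bm w}/w_i$. Consider the candidate vector
\[
\bm v := \frac{d_{\bm w}}{w_i}\, \bm e_i \in \mathbb Z_{\geq 0}^n,
\]
which is well-defined since $w_i$ divides $d_{\bm w}$ by the definition of $d_{\bm w} = \operatorname{lcm}(\bm w)$.

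First I would check that $\bm w \cdot \bm v = d_{\bm w}$, which is immediate from the definition of $\bm v$. Then I would check that $\bm v \preceq \bm u$: the only nonzero coordinate of $\bm v$ is the $i$-th one, and by the standing assumption $u_i \geq d_{\bm w}/w_i = v_i$, so the componentwise inequality holds. But this directly contradicts the defining property of a $\bm w$-bubble, namely that no $\bm v \in \mathbb Z_{\geq 0}^n$ with $\bm w \cdot \bm v = d_{\bm w}$ satisfies $\bm v \preceq \bm u$. Hence $u_i < d_{\bm w}/w_i$, and since $u_i$ is an integer and $d_{\bm w}/w_i$ is an integer, we conclude $u_i \leq d_{\bm w}/w_i - 1$.

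There is no serious obstacle here: the lemma is essentially a direct unpacking of the bubble definition using the specific test vector $\bm v = (d_{\bm w}/w_i)\bm e_i$. The only thing worth mentioning explicitly in the write-up is that $d_{\bm w}/w_i$ is an integer, which is what makes the subtraction of $1$ at the end valid.
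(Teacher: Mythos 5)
Your proof is correct and is essentially the same as the paper's: both argue by contradiction using the test vector $\bm v = (d_{\bm w}/w_i)\bm e_i$, which has weight $d_{\bm w}$ and is dominated by $\bm u$ under the assumption $u_i \geq d_{\bm w}/w_i$. The paper phrases the choice of $\gamma := d_{\bm w}/w_i$ slightly more indirectly, but the content is identical.
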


\begin{proof}
    Let \(\bm u\) be a \bubble{\(\bm w\)} and suppose that $u_i \geq \dfrac{d_{\bm w}}{w_i}$ for some \(1\leq i\leq n\). It gives the inequality \(w_iu_i\geq d_{\bm w}\), where both sides are divisible by \(w_i\). Hence there exists \( \gamma \in \{1, \ldots u_i\}\) such that \(w_i\gamma = d_{\bm w}\),  giving rise to an element \(\bm v := \gamma \bm e_i\preceq \bm u\) satisfying \(\bm w\cdot \bm v = d_{\bm w}\). This is a contradiction. Therefore, $u_i \leq \displaystyle\frac{d_{\bm w}}{w_i}-1$ for every \(1\leq i\leq n\).
\end{proof}

This gives us an upper bound on the scalar product between $\bm w$ and any \bubble{$\bm w$}.

\begin{corollary} \label{thm:bound n-1}
    Every \bubble{$\bm w$} $\bm u$ satisfies $\bm w \cdot \bm u < n d_{\bm w}$.
\end{corollary}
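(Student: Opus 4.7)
The statement follows almost immediately from \cref{lem:firstupperbound}, which provides coordinate-wise upper bounds on any \bubble{$\bm w$}. The plan is simply to sum these coordinate-wise bounds, weighted by the $w_i$, and observe that the resulting estimate is strictly less than $n d_{\bm w}$ because the $w_i$ are positive.

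More precisely, let $\bm u$ be any \bubble{$\bm w$}. By \cref{lem:firstupperbound}, $u_i \leq d_{\bm w}/w_i - 1$ for every $i \in \{1, \ldots, n\}$. Multiplying by $w_i$ and summing over $i$ gives
\[
\bm w \cdot \bm u \;=\; \sum_{i=1}^n w_i u_i \;\leq\; \sum_{i=1}^n \mleft( d_{\bm w} - w_i \mright) \;=\; n d_{\bm w} - \sum_{i=1}^n w_i.
\]
Since $w_i \geq 1$ for every $i$, the sum $\sum_{i=1}^n w_i$ is strictly positive, so $\bm w \cdot \bm u < n d_{\bm w}$, as required.

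There is no real obstacle here; the work has already been done in \cref{lem:firstupperbound}, and this \lcnamecref{thm:bound n-1} is just a linear aggregation of those bounds. The only thing to take care of is that the inequality should be strict, which is guaranteed by the positivity of the weights $w_i$ (one could even note the sharper bound $\bm w \cdot \bm u \leq n d_{\bm w} - \sum_i w_i$, though only the strict inequality is stated).
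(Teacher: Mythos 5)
Your proof is correct and is exactly the argument the paper intends; the paper's own proof simply says ``Follows from \cref{lem:firstupperbound}.'' You have just spelled out the one-line aggregation that the authors left implicit.
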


\begin{proof}
    Follows from \cref{lem:firstupperbound}.
\end{proof}

We show in \cref{thm:infinitely many bubbles graded,thm:infinitely many bubbles rees} that the bound in \cref{thm:bound n-1} is sharp.

\Cref{thm:lcm} is a generalisation of \cref{lem:firstupperbound} that we use for proving \cref{thm:bound n-2 well-formed}.

\begin{lemma} \label{thm:lcm}
Let $\bm u$ be a \bubble{$\bm w$} and let $I$ be a nonempty subset of $\{1, \ldots, n\}$. Then,
\[
\sum_{i \in I} w_i u_i \leq d_{\bm w} + (\abs{I} - 1) \operatorname{lcm}(\{w_i \mid i \in I\}) - \sum_{i \in I} w_i.
\]
\end{lemma}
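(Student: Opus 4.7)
The plan is to argue via an iterative ``extraction'' procedure. Write \(L := \operatorname{lcm}(\{w_i \mid i \in I\})\), which is a divisor of \(d_{\bm w}\), and set \(q := d_{\bm w}/L \in \mathbb Z_{\geq 1}\). The idea is to peel off from \(\bm u\) as many disjoint ``lumps'' of total weight exactly \(L\) as possible, each supported on a single coordinate in \(I\); the bubble hypothesis will prevent peeling off \(q\) of them, which gives just enough room to conclude.

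Concretely, I would inductively construct a sequence \(\bm u^{(0)} := \bm u, \bm u^{(1)}, \ldots\) as follows. While there exists an index \(i \in I\) with \(u_i^{(m)} \geq L/w_i\), choose any such \(i\), call it \(i_m\), and put \(\bm u^{(m+1)} := \bm u^{(m)} - (L/w_{i_m})\,\bm e_{i_m}\). Note that \(L/w_{i_m}\) is a positive integer since \(w_{i_m} \mid L\), and the total weight \(\bm w \cdot \bm u^{(m)}\) drops by exactly \(L\) at each step, so the procedure terminates after some \(M\) steps with \(u_i^{(M)} < L/w_i\) for every \(i \in I\); equivalently \(u_i^{(M)} \leq L/w_i - 1\). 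An immediate induction shows that \(\bm u^{(m)} \in \mathbb Z_{\geq 0}^n\) throughout, so the cumulative vector of extracted lumps \(\bm u - \bm u^{(m)}\) stays pointwise \(\preceq \bm u\) at every stage.

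The crucial observation is that if ever \(M \geq q\), then the sum of the first \(q\) extracted lumps
\[
\bm v := \sum_{r = 0}^{q-1} \frac{L}{w_{i_r}}\, \bm e_{i_r}
\]
satisfies \(\bm v \preceq \bm u\) by the previous paragraph and \(\bm w \cdot \bm v = qL = d_{\bm w}\), contradicting that \(\bm u\) is a \bubble{\(\bm w\)}. Hence \(M \leq q - 1\). Combining the telescoping identity \(\sum_{i \in I} w_i u_i = ML + \sum_{i \in I} w_i u_i^{(M)}\) with the termination bound \(\sum_{i \in I} w_i u_i^{(M)} \leq \abs{I}\,L - \sum_{i \in I} w_i\) gives
\[
\sum_{i \in I} w_i u_i \leq (q - 1) L + \abs{I}\, L - \sum_{i \in I} w_i = d_{\bm w} + (\abs{I} - 1) L - \sum_{i \in I} w_i,
\]
which is the claimed inequality. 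The only subtle point is to track that the partial sum of extracted lumps really does remain \(\preceq \bm u\) at every stage of the extraction, but this follows automatically from maintaining the invariant \(\bm u^{(m)} \in \mathbb Z_{\geq 0}^n\), and the rest is an arithmetic bookkeeping of how much weight can survive when each coordinate in \(I\) has been driven strictly below \(L/w_i\).
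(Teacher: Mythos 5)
Your proof is correct and follows essentially the same route as the paper's: both decompose the $I$-coordinates of $\bm u$ into lumps of weight $L := \operatorname{lcm}(\{w_i \mid i \in I\})$ plus a remainder bounded by $\abs{I}L - \sum_{i \in I} w_i$, and both invoke the bubble hypothesis to cap the number of extractable lumps at $d_{\bm w}/L - 1$ by exhibiting a vector $\bm v \preceq \bm u$ of weight $d_{\bm w}$ otherwise. The only cosmetic difference is that you phrase the decomposition as an iterative extraction, while the paper writes each $u_i = a_i(L/w_i) + b_i$ directly and considers the set of all lump-sums at once.
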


\begin{proof}
Denote $g := \operatorname{lcm}(\{w_i \mid i \in I\})$. For all $i\in I$, let $a_i, b_i$ be the unique nonnegative integers such that
\[
u_i = a_i \frac{g}{w_i} + b_i \quad \text{ and } \quad b_i \leq \frac{g}{w_i} - 1.
\]
Consider the finite set $V$ of vectors $\bm v \in \mathbb Z_{\geq0}^n$ with $\bm v \preceq \bm u$ such that $v_j = 0$ if $j \notin I$ and $v_iw_i$ is divisible by $g$ for every $i \in I$. Then,
\[
\{\bm w \cdot \bm v \mid \bm v \in V\} = \{0,\, g,\, 2g,\, \ldots,\, \sum_{i \in I} a_i g\}.
\]
Since $\bm u$ is a \bubble{$\bm w$}, the inequality $\sum_{i \in I} a_i g \leq d_{\bm w} - g$ holds. Therefore, we obtain the inequality
\[
\sum_{i \in I} w_i u_i = \sum_{i \in I} a_i g + \sum_{i \in I} w_i b_i \leq d_{\bm w} - g +  \abs{I} g - \sum_{i \in I} w_i,
\]
proving the \lcnamecref{thm:lcm}.
\end{proof}

We use of the following well-known lemma in the proofs of \cref{thm:maximum one weight}.

\begin{lemma} \label{thm:combinatorics}
Let $l$ and $m$ be positive integers. Then, both of the following hold:
\begin{enumerate}[label=\textup{(\alph*)}, ref=\alph*]
    \item \label{itm:combinatorics small subsequence} for every sequence \(a_1, \ldots, a_{m}\) of positive integers, there exists a positive integer $k \leq m$ and a \(k\)-element subset \(I\) of \(\{1, \ldots, m\}\) such that \(m\) divides \(\sum_{i\in I}a_i\), and
    \item \label{itm:combinatorics big subsequence} for every sequence $a_1, \ldots, a_l$ of positive integers, there exists a (possibly empty) subsequence $b_1, \ldots, b_k$, where $k \geq l - m + 1$, such that $m$ divides $b_1 + \ldots + b_k$.
\end{enumerate}
\end{lemma}

\begin{lemma} \label{thm:maximum one weight}
Assume that $n \geq 2$ and that $w_1 \leq \ldots \leq w_n$.
If $\bm u \in \mathbb Z_{\geq0}^n$ is a \bubble{$\bm w$}, then for all $i \in \{1, \ldots, n\}$, we have
\[
u_i \leq \mleft\{\begin{aligned}
    & w_n - 2 && \text{if $i \neq n$},\\
    & w_{n-1} - 2 && \text{if $i = n$}.
\end{aligned}\mright.
\]
\end{lemma}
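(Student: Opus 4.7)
The plan is to prove both bounds by contradiction: supposing $\bm u$ is a $\bm w$-bubble violating one of the inequalities, I will exhibit a vector $\bm v \in \mathbb Z_{\geq 0}^n$ with $\bm v \preceq \bm u$ and $\bm w \cdot \bm v = d_{\bm w}$, directly contradicting the bubble property.

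For the bound $u_i \leq w_n - 2$ with $i \neq n$, assume $u_i \geq w_n - 1$. Combining this with \cref{lem:firstupperbound}, which gives $u_i w_i \leq d_{\bm w} - w_i$, yields the key inequality $w_i w_n \leq d_{\bm w}$. Set $g := \gcd(w_i, w_n)$ and $L := \operatorname{lcm}(w_i, w_n) = w_i w_n / g$, so $L \mid d_{\bm w}$ and $d_{\bm w}/L \geq g$. Since $u_i + 1 \geq w_n$, the residues $\{k w_i \bmod w_n : 0 \leq k \leq u_i\}$ cover every multiple of $g$ in $\{0, 1, \ldots, w_n - 1\}$. The aim is to choose $v_i \leq u_i$ copies of $w_i$, $v_n \leq u_n$ copies of $w_n$, and, if needed, a subset drawn from $\{w_j : j \neq i, n\}$ with multiplicities at most $\bm u$, summing to $d_{\bm w}$. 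The bubble hypothesis together with \cref{lem:firstupperbound} gives the budget $\sum_{j \neq i} u_j w_j \geq 2 d_{\bm w} - u_i w_i \geq d_{\bm w} + w_i$, ensuring enough auxiliary weight. I then apply \cref{thm:combinatorics} to the multiset of remaining weights to extract a subset whose sum has the correct residue modulo $w_n$, which pins down $v_i$ and $v_n$ precisely.

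The bound $u_n \leq w_{n-1} - 2$ is proved analogously, with $w_{n-1}$ playing the role of $w_n$. Under the assumption $u_n \geq w_{n-1} - 1$, applying \cref{lem:firstupperbound} to $u_n$ gives the parallel key inequality $w_{n-1} w_n \leq d_{\bm w}$, and the same combinatorial construction produces the required $\bm v$; here the auxiliary weights lie in $\{w_j : j < n\}$ and the bubble condition forces their total to exceed $d_{\bm w}$.

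The main obstacle is the coprime case $\gcd(w_i, w_n) = 1$ (and its analogue $\gcd(w_{n-1}, w_n) = 1$). In that situation, the attempted representation $d_{\bm w} = a w_i + b w_n$ using only the two weights requires either $b = d_{\bm w}/w_n$ (exceeding the bound $u_n \leq d_{\bm w}/w_n - 1$ by one) or $a \geq w_n$ (exceeding $u_i \geq w_n - 1$ by one). Closing this gap requires genuinely incorporating at least one other weight from $\bm u$ and carefully choosing its residue modulo $w_n$ via \cref{thm:combinatorics}, with bookkeeping to stay within all multiplicities.
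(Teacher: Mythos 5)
You correctly derive the budget inequality $\sum_{j \neq i} u_j w_j \geq d_{\bm w} + w_i$ and the key observation $w_i w_n \leq d_{\bm w}$, but the argument is not actually completed. You yourself flag the case $\gcd(w_i, w_n) = 1$ as ``the main obstacle'' and say closing it ``requires\dots bookkeeping to stay within all multiplicities'' --- that bookkeeping is the heart of the lemma and is not supplied. Moreover, \cref{thm:combinatorics} only produces a subsequence whose sum is $\equiv 0 \pmod m$; it does not let you hit an arbitrary residue class modulo $w_n$, so the step ``extract a subset whose sum has the correct residue modulo $w_n$'' is not justified by the cited lemma. A further problem: your budget $\sum_{j \neq i} u_j w_j$ includes the term $u_n w_n$, but the auxiliary subset you propose must avoid both $w_i$ and $w_n$, so the amount of auxiliary weight actually available is not the quantity you computed.

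The paper's proof works modulo $w_i$ rather than modulo $w_n$, which dissolves all of these difficulties at once. Setting $\bm u' := \bm u - u_i \bm e_i$ (so $\bm w \cdot \bm u' \geq d_{\bm w} + w_i$ by \cref{lem:firstupperbound}), one takes a greedy prefix of $\bm u'$ whose $\bm w$-weight lands in $[\,d_{\bm w} + w_i - \alpha_i,\ d_{\bm w} + w_i)$, where $\alpha_i := w_n$ for $i \neq n$ and $\alpha_i := w_{n-1}$ for $i = n$ (the maximal weight off index $i$), and applies \cref{thm:combinatorics}\labelcref{itm:combinatorics big subsequence} with $m = w_i$ to drop at most $w_i - 1$ of those terms, obtaining $\bm v' \preceq \bm u'$ with $w_i \mid \bm w \cdot \bm v'$ and $\bm w \cdot \bm v' \in [\,d_{\bm w} - w_i(\alpha_i - 1),\ d_{\bm w} + w_i)$. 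Since both $d_{\bm w}$ and $\bm w \cdot \bm v'$ are multiples of $w_i$, the strict upper bound forces $\bm w \cdot \bm v' \leq d_{\bm w}$, so $\bm v := \bm v' + \frac{d_{\bm w} - \bm w \cdot \bm v'}{w_i}\bm e_i$ is nonnegative with weight $d_{\bm w}$, and the bubble property gives $u_i < \frac{d_{\bm w} - \bm w \cdot \bm v'}{w_i} \leq \alpha_i - 1$, that is $u_i \leq \alpha_i - 2$. The residue to control is exactly the one \cref{thm:combinatorics} delivers (divisibility by $w_i$), and the leftover is absorbed into a free multiple of $\bm e_i$, whose size is precisely what the bubble hypothesis bounds; there are no simultaneous constraints on $v_i$ and $v_n$ to juggle, which is what blocks your modulo-$w_n$ version.
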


\begin{proof}
Let us fix $i \in \{1, \ldots, n\}$. Define
\[
\alpha_i := \mleft\{\begin{aligned}
    & w_n && \text{if $i \neq n$},\\
    & w_{n-1} &&  \text{if $i = n$}.
\end{aligned}\mright.
\]
Let \(\bm u' := \bm u-u_i\bm e_i\). Our strategy to prove the \lcnamecref{thm:maximum one weight} consists of constructing an element \(\bm v' \in \mathbb{Z}_{\geq 0}^n\) satisfying \(\bm v' \preceq \bm u'\) such that the inequalities
\[d_{\bm w} - w_i (\alpha_i-1) \leq \bm w \cdot \bm v' <  d_{\bm w} +w_i\]
hold and \(\bm w \cdot \bm v'\) is divisible by \(w_i\). If such an element \(\bm v'\) exists, then the element
\[\bm v := \bm v' + \mleft(\frac{d_{\bm w}-\bm w\cdot \bm v'}{w_i}\mright)\bm e_i\in \mathbb{Z}_{\geq0}^n\]
satisfies \(\bm w\cdot \bm v = d_{\bm w}\). Thus, since \(\bm u\) is a \bubble{$\bm w$}, we cannot have that \(\bm v\preceq \bm u\). This implies the following
\[u_i \leq \frac{d_{\bm w} - \bm w \cdot \bm v'}{w_i} - 1 \leq \frac{w_i (\alpha_i-1)}{w_i} - 1,\]
giving the desired inequality.
Let us show now that such an element \(\bm v'\) exists to conclude the proof.
\smallskip

Let  \(s := \sum_{1\leq j\leq n}u'_j\) and let $j_1, \ldots, j_s \in \{1, \ldots, n\}$ be such that $\bm u' = \sum_{1\leq k\leq s} \bm e_{j_k}$. Since the inequalities $\bm w \cdot \bm u \geq 2d_{\bm w}$ and $w_i u_i \leq d_{\bm w}-w_i$ hold respectively by definition and by \cref{lem:firstupperbound}, we find $\bm w \cdot \bm u' \geq d_{\bm w}+w_i$. Now, let $r$ be the greatest integer such that the inequality $\bm w \cdot \sum_{1\leq k \leq r} \bm e_{j_k} < d_{\bm w}+w_i$ holds.
By maximality of $r$, we obtain the following inequalities:
\[
d_{\bm w} -\alpha_i+w_i \leq \bm w \cdot \smashoperator{\sum_{k \in \{1, \ldots, r\}}} \bm e_{j_k} < d_{\bm w} + w_i.
\]
By \cref{thm:combinatorics}, there exists a subset $J$ of $\{1, \ldots, r\}$ satisfying that $\abs{J} \geq r - w_i + 1$ and such that $w_i$ divides $\bm w \cdot \sum_{k \in J} \bm e_{j_k}$. We have the following:
\begin{equation*}
d_{\bm w} - w_i (\alpha_i-1) \leq \bm w \cdot \sum_{k \in J} \bm e_{j_k} <  d_{\bm w} +w_i.
\end{equation*}
Define
\(
\bm v' :=  \sum_{k \in J} \bm e_{j_k}
\)
and note that \(\bm v' \preceq \bm u'\), finishing the proof.
\end{proof}

We give two corollaries that are weaker than \cref{thm:maximum one weight} but slightly easier to use.

\begin{corollary} \label{thm:maximum all weights}
Every \bubble{$\bm w$} $\bm u$ satisfies $\bm w \cdot \bm u \leq (\max(\bm w) - 2) \cdot \sum_i w_i$.
\end{corollary}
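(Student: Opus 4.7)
The plan is to deduce this corollary essentially immediately from \cref{thm:maximum one weight}, the work having already been done there. The only subtlety is that \cref{thm:maximum one weight} assumed $n \geq 2$ and the weights to be sorted in nondecreasing order, so a tiny bit of bookkeeping is needed.

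First I would dispose of the degenerate case $n = 1$: any \bubble{$\bm w$} $\bm u$ would satisfy $w_1 u_1 \geq 2 d_{\bm w} = 2 w_1$, hence $u_1 \geq 2$, but then $\bm e_1 \preceq \bm u$ with $\bm w \cdot \bm e_1 = d_{\bm w}$, contradicting the definition of a \bubble{$\bm w$}. Hence for $n = 1$ there is nothing to prove (the statement is vacuous).

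For $n \geq 2$, after permuting coordinates (which does not affect $\bm w \cdot \bm u$ nor the notion of a \bubble{$\bm w$}) I may assume $w_1 \leq \cdots \leq w_n$, so that $\max(\bm w) = w_n$. \Cref{thm:maximum one weight} then gives $u_i \leq w_n - 2 = \max(\bm w) - 2$ for $i \neq n$, and $u_n \leq w_{n-1} - 2 \leq w_n - 2 = \max(\bm w) - 2$. Summing with weights yields
\[
\bm w \cdot \bm u = \sum_{i=1}^n w_i u_i \leq (\max(\bm w) - 2) \sum_{i=1}^n w_i,
\]
which is the desired inequality.

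There is no real obstacle here; the corollary is a uniform weakening of the componentwise bound in \cref{thm:maximum one weight}, obtained by replacing the slightly sharper bound $u_n \leq w_{n-1} - 2$ with the uniform bound $\max(\bm w) - 2$. The only thing to watch out for is the correct handling of the $i = n$ coordinate and the assumption $n \geq 2$ used by \cref{thm:maximum one weight}.
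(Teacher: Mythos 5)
Your proof is correct and takes exactly the same route as the paper, which simply cites \cref{thm:maximum one weight} as a direct consequence. You spell out the $n=1$ vacuity and the reordering step, but these are the routine bookkeeping the paper leaves implicit.
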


\begin{proof}
    Direct consequence of \cref{thm:maximum one weight}.
\end{proof}

If $w_n$ is big compared to~$w_{n-1}$, then \cref{thm:maximum product weights} may give a better bound than \cref{thm:maximum all weights}.

\begin{corollary} \label{thm:maximum product weights}
Assume that $n \geq 2$ and $w_1 \leq \ldots \leq w_n$.
Then, every \bubble{$\bm w$}~$\bm u$ satisfies $\bm w \cdot \bm u \leq n w_{n-1} (w_n - 2)$.
\end{corollary}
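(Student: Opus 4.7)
The plan is to derive \cref{thm:maximum product weights} directly from the pointwise bounds on the coordinates of a bubble given in \cref{thm:maximum one weight}. That lemma tells us $u_i \leq w_n - 2$ for every $i \in \{1, \ldots, n-1\}$ and $u_n \leq w_{n-1} - 2$, so we get
\[
\bm w \cdot \bm u = \sum_{i=1}^{n-1} w_i u_i + w_n u_n \leq (w_n - 2) \sum_{i=1}^{n-1} w_i + w_n (w_{n-1} - 2).
\]
This is essentially the starting estimate behind \cref{thm:maximum all weights}; the goal now is to re-bundle it more carefully so that the ``small'' factor $w_{n-1}$ rather than the ``large'' factor $w_n$ appears as a common factor.

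First I would use the monotonicity $w_1 \leq \ldots \leq w_{n-1}$ to replace every $w_i$ in the first sum by $w_{n-1}$, which gives $\sum_{i=1}^{n-1} w_i \leq (n-1) w_{n-1}$ and hence
\[
\bm w \cdot \bm u \leq (n-1)(w_n - 2) w_{n-1} + w_n (w_{n-1} - 2).
\]
It then suffices to check that $w_n (w_{n-1} - 2) \leq w_{n-1}(w_n - 2)$, which rearranges to $-2 w_n \leq -2 w_{n-1}$, i.e.\ to the hypothesis $w_{n-1} \leq w_n$. Adding $(n-1) w_{n-1}(w_n - 2)$ to both sides of this inequality collapses the previous estimate into the desired bound $\bm w \cdot \bm u \leq n w_{n-1}(w_n - 2)$.

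There is no real obstacle: once \cref{thm:maximum one weight} is in hand, this is a one-line bookkeeping computation. The only mild subtlety is to notice that the naive combination of the bounds on $u_i$ gives the weaker factor $w_n - 2$ on every term (as in \cref{thm:maximum all weights}), so one has to peel off the $i = n$ contribution and trade it against a single factor of $w_{n-1}$ via the monotonicity of the weights.
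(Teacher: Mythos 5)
Your argument is correct and is essentially the paper's proof: both start from the termwise bounds of \cref{thm:maximum one weight}, isolate the $i=n$ term, and combine the observation $w_n(w_{n-1}-2)\le w_{n-1}(w_n-2)$ with $\sum_{i<n}w_i\le(n-1)w_{n-1}$ to reach $n w_{n-1}(w_n-2)$. The only cosmetic difference is the order in which you apply these two inequalities.
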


\begin{proof}
By \cref{thm:maximum one weight}, the following inequality holds:
\begin{equation} \label[eqn]{eqn:strong}
\bm w \cdot \bm u \leq (w_{n-1} - 2)w_n + (w_n - 2) \smashoperator{\sum_{i \in \{1, \ldots, n-1\}}} w_i.
\end{equation}
By assumption, we have $(w_{n-1} - 2)w_n \leq (w_n - 2)w_{n-1}$, so the \lcnamecref{thm:maximum product weights} follows.
\end{proof}

\subsection{Divisibility chains} \label[sec]{sec:divisibility}

The main results of this section are \cref{thm:multiple chains,thm:special multiple chains}, which show nonexistence of \bubbles{$\bm w$} in special cases concerning divisibility.

We use \cref{not:graph} in \cref{thm:one chain,thm:multiple chains,thm:special one chain}.

\begin{notation} \label{not:graph}
    Let \(G_{\bm w}\) be the directed graph with set of vertices \(\{1, \ldots, n\}\) and such that there exists an arrow from \(i\) to \(j\) if \(w_i \mid w_j\).
\end{notation}

\begin{lemma} \label{thm:one chain}
Let \(\bm u\) be a \bubble{\(\bm w\)}.
Then, for every directed path in \(G_{\bm w}\) with vertices \(I\), the following is satisfied:
\[
\smashoperator{\sum_{i \in I}} w_i u_i < d_{\bm w}.
\]
\end{lemma}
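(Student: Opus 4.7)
My plan is to prove the contrapositive: ordering the vertices of the directed path as $i_1, \ldots, i_k$ with $w_{i_1} \mid w_{i_2} \mid \cdots \mid w_{i_k}$, I will show that the assumption $\sum_{j=1}^{k} w_{i_j} u_{i_j} \geq d_{\bm w}$ allows the construction of a vector $\bm v \in \mathbb Z_{\geq 0}^n$ supported on $I$ with $\bm v \preceq \bm u$ and $\bm w \cdot \bm v = d_{\bm w}$, contradicting $\bm u$ being a \bubble{$\bm w$}. The essential structural input is that $w_{i_k} \mid d_{\bm w}$, which holds because $d_{\bm w} = \operatorname{lcm}(\bm w)$.

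The construction I envision is a greedy ``largest-first'' allocation exploiting the divisibility chain. Setting $N_k := d_{\bm w}$, I would iteratively define for $j = k, k-1, \ldots, 1$ the value
\[
v_{i_j} := \begin{cases} u_{i_j} & \text{if } w_{i_j} u_{i_j} \leq N_j, \\ N_j / w_{i_j} & \text{otherwise,} \end{cases}
\]
updating $N_{j-1} := N_j - w_{i_j} v_{i_j}$ in the first branch, while in the second branch the loop terminates with $v_{i_l} := 0$ for all $l < j$. All components of $\bm v$ outside $I$ are set to zero.

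The key loop invariant $w_{i_j} \mid N_j$ holds at $j = k$ because $w_{i_k} \mid d_{\bm w}$, and is preserved since $N_{j-1} = N_j - w_{i_j} u_{i_j}$ is a difference of two multiples of $w_{i_{j-1}}$ (using $w_{i_{j-1}} \mid w_{i_j}$). This makes each $v_{i_j}$ a well-defined nonnegative integer, and the inequality $v_{i_j} \leq u_{i_j}$ holds in both branches---as an equality in the first, and forced by the case condition $w_{i_j} u_{i_j} > N_j$ in the second. The identity $\bm w \cdot \bm v = d_{\bm w}$ then follows either by a telescoping argument if the second branch is entered at some step $j$ (yielding $\bm w \cdot \bm v = N_j + \sum_{l > j} w_{i_l} u_{i_l} = d_{\bm w}$), or because the invariant $N_j \geq 0$ combined with the hypothesis $\sum_j w_{i_j} u_{i_j} \geq d_{\bm w}$ forces $N_0 = 0$ when the loop completes normally.

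I do not expect any real obstacle; the heart of the matter is the standard observation that denominations forming a divisibility chain admit greedy change-making, and the invariant $N_j \geq 0$ is precisely what makes the case split well-defined.
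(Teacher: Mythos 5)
Your proposal is correct, and it takes a genuinely different route from the paper.

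The paper proceeds by a \emph{normal-form} argument: writing the path vertices as $i_1,\ldots,i_N$ with $w_{i_j}\mid w_{i_{j+1}}$, it shows that whenever $w_{i_j}u_{i_j}\ge w_{i_{j+1}}$ one can replace $\bm u$ by $\bm u':=\bm u+\bm e_{i_{j+1}}-\frac{w_{i_{j+1}}}{w_{i_j}}\bm e_{i_j}$, which is again a \bubble{$\bm w$} with the same value of $\sum_{i\in I}w_iu_i$ (this requires a short case analysis on a hypothetical $\bm v'\preceq\bm u'$ to show $\bm u'$ is still a bubble). Iterating, one reduces to the case $w_{i_j}u_{i_j}\le w_{i_{j+1}}-w_{i_j}$ for all $j<N$, whence a telescoping sum gives $\sum_{i\in I}w_iu_i\le (u_{i_N}+1)w_{i_N}-w_{i_1}\le d_{\bm w}-w_{i_1}<d_{\bm w}$, using \cref{lem:firstupperbound} for the last estimate. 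You instead argue the contrapositive directly: assuming $\sum_{i\in I}w_iu_i\ge d_{\bm w}$, you greedily carve out a witness $\bm v\preceq\bm u$ with $\bm w\cdot\bm v=d_{\bm w}$ by descending the divisibility chain from its top (the only place $w_{i_k}\mid d_{\bm w}$ is used), maintaining the invariant $w_{i_j}\mid N_j\ge0$; this is essentially the greedy change-making argument for a divisibility-chain coin system. Both proofs are sound. Your argument is shorter and more direct, and avoids the somewhat delicate step of verifying that the modified vector $\bm u'$ remains a bubble; the paper's version, in exchange, extracts the slightly sharper quantitative bound $\sum_{i\in I}w_iu_i\le d_{\bm w}-w_{i_1}$, a strengthening which is reused (in adapted form) in the proof of \cref{thm:special one chain}. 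For the lemma as literally stated, the strict inequality $<d_{\bm w}$, either route suffices.
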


\begin{proof}
Let \(I = (i_1, \ldots, i_N)\) be the vertex sequence of a directed path in \(G_{\bm w}\). By definition, for all $j \in \{1, \ldots, N-1\}$, we have that $w_{i_j}$ divides~$w_{i_{j+1}}$.

If there exists $j \in \{1, \ldots, N-1\}$ such that $w_{i_j} u_{i_j} \geq w_{i_{j+1}}$, then we define
\[
\bm u' := \bm u + \bm e_{i_{j+1}} - \frac{w_{i_{j+1}}}{w_{i_j}} \bm e_{i_j}.
\]
We remark that \(\bm u'\) has nonnegative coordinates and it satisfies $\sum_{i \in I} w_i \cdot u_i' = \sum_{i \in I} w_i \cdot u_i$. Suppose that $\bm u'$ is not a \bubble{\(\bm w\)} and let \(\bm v'\preceq \bm u'\) be an element of \(\mathbb Z_{\geq0}^n\) such that \(\bm w\cdot \bm v' = d_{\bm w}\). We define
\[
\bm v := \mleft\{\begin{aligned}
  & \bm v' && \text{if $v_{i_{j+1}}' \leq u_{i_{j+1}}$},\\
  & \bm v' - \bm e_{i_{j+1}} + \frac{w_{i_{j+1}}}{w_{i_j}} \bm e_{i_j} && \text{otherwise}.
\end{aligned}\mright.
\]
Then, $\bm v\preceq \bm u$ is an element of  \(\mathbb Z_{\geq0}^n\) and $\bm w\cdot \bm v = \bm w \cdot \bm v' = d_{\bm w}$, a contradiction. Therefore, $\bm u'$ is a \bubble{\(\bm w\)}.
By induction, we find that for all $j \in \{1, \ldots, N-1\}$, the inequality
\[
w_{i_j} u_{i_j} \leq w_{i_{j+1}} - w_{i_j}
\]
holds. We deduce the following inequality:
\[
\sum_{i \in I} w_i u_i \leq (u_{i_N} + 1) w_{i_N} - w_{i_1}.
\]
By \cref{lem:firstupperbound}, we know that \((u_{i_N} + 1) w_{i_N}\leq d_{\bm w}\). Therefore, the inequalities
\[
(u_{i_N} + 1) w_{i_N} - w_{i_1} \leq d_{\bm w} - w_{i_1} < d_{\bm w},
\]
hold, proving the \lcnamecref{thm:one chain}.
\end{proof}

\begin{proposition} \label{thm:multiple chains}
If there exists a partition $\{I_1, \ldots, I_N\}$ of $\{1, \ldots, n\}$ such that each $I_j$ is the vertex set of a directed path in $G_{\bm w}$, then every \bubble{$\bm w$} $\bm u$ satisfies $\bm w \cdot \bm u < N d_{\bm w}$.
\end{proposition}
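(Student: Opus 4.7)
The plan is to apply \cref{thm:one chain} to each part of the partition and then sum the resulting inequalities. Since $\{I_1, \ldots, I_N\}$ is a partition of $\{1, \ldots, n\}$, the scalar product $\bm w \cdot \bm u$ decomposes as
\[
\bm w \cdot \bm u = \sum_{j=1}^N \sum_{i \in I_j} w_i u_i.
\]
Each $I_j$ is the vertex set of a directed path in $G_{\bm w}$, so \cref{thm:one chain} applies to each index set and yields the strict inequality $\sum_{i \in I_j} w_i u_i < d_{\bm w}$.

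Summing these $N$ strict inequalities gives $\bm w \cdot \bm u < N d_{\bm w}$, which is the desired conclusion. There is essentially no obstacle here: the only subtle point is that we need $N \geq 1$ for the strict inequality to be preserved under summation, and this is automatic since a partition of the nonempty set $\{1, \ldots, n\}$ is itself nonempty. The entire proof is a one-line application of \cref{thm:one chain}.
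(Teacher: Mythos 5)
Your proof is correct and takes exactly the same route as the paper, whose entire proof of this proposition is ``Follows from \cref{thm:one chain}.'' You have simply spelled out the routine details: apply \cref{thm:one chain} to each block $I_j$, then sum the $N$ strict inequalities.
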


\begin{proof}
Follows from \cref{thm:one chain}.
\end{proof}

\begin{lemma} \label{thm:special one chain}
Let $\bm u$ be a \bubble{\(\bm w\)}.
If there exists a directed path in \(G_{\bm w}\), with vertex sequence $I:= (i_1,\ldots, i_N)$, and integers $k, l \in \{1, \ldots, n\} \setminus I$ such that $w_{i_N} \mid w_k + w_l$, then
\[
\smashoperator{\sum_{i \in I \cup \{k, l\}}} w_i u_i < 2d_{\bm w}.
\]
\end{lemma}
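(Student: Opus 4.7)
The plan is to mimic the strategy of \cref{thm:one chain}, refining the final estimate to exploit the divisibility $w_{i_N} \mid w_k + w_l$. Heuristically, $\bm e_k + \bm e_l$ behaves like a ``super-atom'' of weight $t w_{i_N}$, where $t := (w_k + w_l)/w_{i_N}$, sitting on top of the chain and buying us roughly an extra $d_{\bm w}$ of budget compared to \cref{thm:one chain}.

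First, I would apply the same push-up reduction as in the proof of \cref{thm:one chain} to the chain alone: whenever $w_{i_j} u_{i_j} \geq w_{i_{j+1}}$ for some $j < N$, replace $\bm u$ by $\bm u - (w_{i_{j+1}}/w_{i_j}) \bm e_{i_j} + \bm e_{i_{j+1}}$. By the argument in \cref{thm:one chain}, this preserves bubble-ness; moreover it leaves $u_k$ and $u_l$ untouched, so it preserves the quantity $\sum_{i \in I \cup \{k,l\}} w_i u_i$ together with the hypothesis of the lemma. Iterating, we may assume $w_{i_j} u_{i_j} \leq w_{i_{j+1}} - w_{i_j}$ for every $j < N$, which telescopes to $\sum_{i \in I} w_i u_i \leq (u_{i_N} + 1) w_{i_N} - w_{i_1}$.

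The main step is then to establish the complementary bound $(u_{i_N} + 1) w_{i_N} + w_k u_k + w_l u_l < 2 d_{\bm w}$. Set $N := d_{\bm w}/w_{i_N}$ and assume without loss of generality that $u_k \leq u_l$. For every $j \in \{0, \ldots, u_k\}$ the vector $j(\bm e_k + \bm e_l) + (N - jt) \bm e_{i_N}$ has weight exactly $d_{\bm w}$, so the bubble property forbids all of them; equivalently, no multiple of $t$ that lies in the interval $[N - u_{i_N}, N]$ can be at most $u_k t$. This leads to two cases. In the first, $[N - u_{i_N}, N]$ contains no multiple of $t$ at all, forcing $u_{i_N} \leq t - 2$ and hence $(u_{i_N} + 1) w_{i_N} \leq w_k + w_l - w_{i_N}$. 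In the second, some multiple of $t$ lies in $[N - u_{i_N}, N]$ but must exceed $u_k t$, giving $u_k t + u_{i_N} + 1 \leq N$, equivalently $u_k (w_k + w_l) + (u_{i_N} + 1) w_{i_N} \leq d_{\bm w}$. In each case, combining with \cref{lem:firstupperbound} (which yields $w_k u_k \leq d_{\bm w} - w_k$ and $w_l u_l \leq d_{\bm w} - w_l$) produces the desired strict inequality. Adding this to the telescoped chain estimate finishes the proof.

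The main subtlety is the second step, specifically recognising the right family of ``test'' sub-bubbles $j(\bm e_k + \bm e_l) + z \bm e_{i_N}$: the symmetric combination in $k$ and $l$ is what turns the divisibility hypothesis $w_{i_N} \mid w_k + w_l$ into a clean arithmetic condition about multiples of $t$ in a short interval, and it is also what makes the subsequent case split line up exactly with the bounds provided by \cref{lem:firstupperbound}.
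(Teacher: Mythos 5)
Your proof is correct, but it takes a genuinely different route from the paper's. The paper does not invoke the push-up reduction from \cref{thm:one chain}; it works directly with the original bubble $\bm u$, picks $\alpha \in \{k, l\}$ with the smaller $u$-coordinate, subtracts the super-atom weight $(w_\alpha + w_\beta) u_\alpha$ from the budget $d_{\bm w}$ up front, and then runs a descent over the whole chain: if some quantity $c_{i_j} := \bigl(d_{\bm w} - (w_\alpha + w_\beta) u_\alpha - \sum_{l > j} w_{i_l} u_{i_l}\bigr)/w_{i_j}$ satisfies $u_{i_j} \geq c_{i_j}$, one assembles a test vector $\gamma \bm e_{i_j} + \sum_{l > j} u_{i_l}\bm e_{i_l} + u_\alpha(\bm e_\alpha + \bm e_\beta) \preceq \bm u$ of weight exactly $d_{\bm w}$, contradicting bubbleness; this yields \cref{ineq:sum chain u-i}, which is then combined with the trivial bounds on $u_\alpha$ and $u_\beta$. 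You instead normalise the chain first so it contributes at most $(u_{i_N}+1)w_{i_N} - w_{i_1}$, then use the one-parameter family of test vectors $j(\bm e_k + \bm e_l) + (M - jt)\bm e_{i_N}$ concentrated at the top vertex only, which recasts the divisibility hypothesis as an arithmetic statement about multiples of $t := (w_k + w_l)/w_{i_N}$ in the interval $[M - u_{i_N}, M]$. The two approaches are roughly dual: the paper removes the pair from the budget and spreads the remainder over a suffix of the chain, while you collapse the chain and analyse the pair against the single top vertex. Both select the index in $\{k, l\}$ with the smaller $u$-coordinate and both close with \cref{lem:firstupperbound}; yours is slightly more modular (reusing the \cref{thm:one chain} normalisation as a black box), while the paper's is more self-contained. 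I checked the arithmetic of your two cases and both give the strict bound. One cosmetic fix: you overload $N$ for both the chain length and $d_{\bm w}/w_{i_N}$; rename the latter (say to $M$) to avoid a collision.
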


\begin{proof}
Let \(I:= (i_1,\ldots, i_N)\) be the vertex sequence of a directed path in \(G_{\bm w}\), and suppose there exist $k, l \in \{1, \ldots, n\} \setminus I$ satisfying $w_{i_N} \mid w_k + w_l$. Let $\{\alpha, \beta\} = \{k, l\}$ be such that $u_\alpha \leq u_\beta$. According to \cref{lem:firstupperbound}, we observe the inequalities:
\[
0 < w_\alpha+w_\beta \leq d_{\bm w}-(w_\alpha + w_\beta)u_{\alpha}.
\]
For all \(j \in \{1, \ldots, N\}\), we define
\[
  c_{i_j} :=
 \frac{d_{\bm w} - (w_\alpha + w_\beta) u_\alpha - \sum_{l \in \{j+1, \ldots, N\}} w_{i_l}u_{i_l}}{w_{i_j}}.
\]

Suppose that there exists \(j \in \{1, \ldots, N\}\) such that \(u_{i_j} \geq c_{i_j}\), and without loss of generality, suppose that $j$ is the greatest satisfying this property. The existence and maximality of \(j\) imply that the inequalities
\[
\smashoperator{\sum_{l \in \{j+1, \ldots, N\}}} w_{i_l}u_{i_l}
< d_{\bm w} - (w_\alpha + w_\beta) u_\alpha
\leq \smashoperator{\sum_{l \in \{j, \ldots, N\}}} w_{i_l}u_{i_l}
\]
hold, and all three are divisible by \(w_{i_j}\). Hence, there exists \(\gamma \in \{1, \ldots, u_{i_j}\}\) such that
\[\gamma w_{i_j} + \smashoperator{\sum_{l \in \{j+1, \ldots, N\}}} w_{i_l}u_{i_l} = d_{\bm w} - (w_\alpha + w_\beta) u_\alpha.\]
In particular, $\bm c := \gamma e_{i_j} + \sum_{l \in \{j+1, \ldots, N\}} u_{i_l} \bm e_{i_l} +  u_\alpha \bm e_\alpha + u_\alpha \bm e_\beta\preceq \bm u$ is an element of \(\mathbb Z_{\geq0}^n\)  such that \(\bm w\cdot \bm c = d_{\bm w}\). This contradicts our assumption on~$\bm u$ being a \bubble{\(\bm w\)}.

Therefore, \(u_{i_j} < c_{i_j}\) for every \(1\leq j\leq N\). In particular, \(\sum_{i\in I}w_iu_i < d_{\bm w}-(w_\alpha + w_\beta)u_\alpha\). Since $\sum_{i\in I}w_iu_i$, $d_{\bm w}$ and $w_\alpha + w_\beta$ are all divisible by $w_{i_1}$, we obtain the inequality:
\begin{equation}
\label[ineq]{ineq:sum chain u-i} \sum_{i \in I} w_i u_i \leq d_{\bm w} - (w_\alpha + w_\beta) u_\alpha - w_{i_1}.
\end{equation}
We also have the following two inequalities:
\begin{align}
  \label[ineq]{ineq:sum chain u-alpha} w_\alpha u_\alpha & \leq w_\alpha u_\alpha,\\
  \label[ineq]{ineq:sum chain u-beta} w_\beta u_\beta & \leq d_{\bm w} - w_\beta.
\end{align}
Adding the \cref{ineq:sum chain u-i,ineq:sum chain u-alpha,ineq:sum chain u-beta} gives
\[
\smashoperator{\sum_{i \in I \cup \{k, l\}}} w_i \cdot u_i \leq 2d_{\bm w} - w_\beta (u_\alpha + 1) - w_{i_1} < 2d_{\bm w},
\]
which concludes the proof.
\end{proof}

\begin{proposition} \label{thm:special multiple chains}
If $n \geq 3$ and we have a divisibility chain
\[
w_1 \mid w_2 \mid \ldots \mid w_{n-2} \mid w_{n-1} + w_n
\]
after a suitable permutation of the weights, then there are no \bubbles{\(\bm w\)}.
\end{proposition}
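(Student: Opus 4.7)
The plan is to derive this proposition as a direct corollary of \cref{thm:special one chain}. Since the notion of \bubble{$\bm w$} is invariant under permutations of the coordinates, we may assume that the weights are already ordered so that the divisibility chain $w_1 \mid w_2 \mid \ldots \mid w_{n-2} \mid w_{n-1} + w_n$ holds as stated. By the first $n-3$ divisibilities, the sequence $I := (1, 2, \ldots, n-2)$ is the vertex sequence of a directed path in the graph $G_{\bm w}$ of \cref{not:graph}, with last vertex $i_N = n-2$. Set $k := n-1$ and $l := n$, both lying in $\{1, \ldots, n\} \setminus I$. The final divisibility $w_{n-2} \mid w_{n-1} + w_n$ is exactly the hypothesis $w_{i_N} \mid w_k + w_l$ required to apply \cref{thm:special one chain}.

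Applied to an arbitrary \bubble{$\bm w$} $\bm u$, that lemma gives
\[
\bm w \cdot \bm u \;=\; \smashoperator{\sum_{i \in I \cup \{k, l\}}} w_i u_i \;<\; 2 d_{\bm w},
\]
since $I \cup \{k, l\} = \{1, \ldots, n\}$. This contradicts the defining inequality $\bm w \cdot \bm u \geq 2 d_{\bm w}$ of a bubble, so no \bubble{$\bm w$} can exist. In the boundary case $n = 3$, the path $I$ degenerates to the single vertex $\{1\}$, which is still a valid directed path, and the lemma's hypothesis reduces precisely to the assumed $w_1 \mid w_2 + w_3$; the argument goes through unchanged.

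There is no real obstacle here once \cref{thm:special one chain} is in hand; the only thing to be careful about is the edge case $n = 3$, where one must verify that a single-vertex path is admissible in the statement of \cref{thm:special one chain}, and the indexing that matches $I \cup \{k, l\}$ with $\{1, \ldots, n\}$.
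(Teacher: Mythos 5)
Your proof is correct and is exactly the spelled-out version of the paper's one-line proof ("Follows from \cref{thm:special one chain}"). Your verification of the $n = 3$ boundary case, where the directed path $I$ is a single vertex, is a worthwhile sanity check and indeed goes through in the proof of \cref{thm:special one chain}.
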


\begin{proof}
Follows from \cref{thm:special one chain}.
\end{proof}

\begin{remark}
For an example application, by \cite[Theorem~1.13]{Kaw03} and \cite[Theorem~2.6]{Yam18}, every 3-dimensional divisorial contraction over $\mathbb C$ with centre an isolated $cA_k$ singularity is locally analytically the restriction $Y \to X$ to a hypersurface of the weighted blowup of~$\mathbb A^4_{\mathbb C}$ with weights $\bm w$, where $\bm w$ is either one of the following:
\begin{enumerate}
\item $(r_1, r_2, a, 1)$, where $a$ divides $r_1 + r_2$,
\item $(1, 5, 3, 2)$, or
\item $(4, 3, 2, 1)$.
\end{enumerate}
As a corollary of \cref{thm:special multiple chains,cor:bubble and generation in degree 1 - Rees,thm:very ample}, in each of the three cases, the line bundle $\mathcal O_Y(d_{\bm w})$ is very ample.
\end{remark}

\subsection{Technical inequality} \label[sec]{sec:technical}

\Cref{thm:bound on u when all entries are positive} is the most important technical inequality for determining the small weights that admit bubbles in \cref{sec:small}.
We use the following notation in \cref{thm:bound on u when all entries are positive}.

\begin{notation} \label{not:bound on u when all entries are positive}
    For all \(i \in \{1, \ldots, n\}\) and \(r \in \{0, \ldots, w_i-1\}\), define
    \begin{align*}
        B_{\bm w, i,r} & := \bigl\{\bm w\cdot \bm b \;\big|\; \bm b\in \{0, 1\}^n,\, b_i=0,\, \bm w\cdot \bm b\equiv r\mod w_i\bigr\},\\
        b_{\bm w, i,r} & := \mleft\{\begin{aligned}
            & \min(B_{\bm w, i,r}) && \text{if $B_{\bm w, i,r}$ is nonempty},\\
            & (w_i-1) \cdot \max(\pi_i(\bm w)) && \text{otherwise},
        \end{aligned}\mright.\\
        c_{\bm w, i} & := \max\{b_{\bm w, i,r} \mid r \in \{0, \ldots, w_i-1\}\}.
    \end{align*}
\end{notation}

\begin{lemma} \label{thm:bound on u when all entries are positive}
    Let \(\bm u\) be a \bubble{\(\bm w\)} with positive coordinates. Then, for every \(i \in \{1, \ldots, n\}\) such that the inequality
    \(\sum_{j \in \{1, \ldots, n\} \setminus \{i\}} w_j < d_{\bm w}+w_i\)
    holds, we have an inequality:
    \[
        u_i \leq \mleft\lfloor \frac{\max(\pi_i(\bm w)) + c_{\bm w, i}}{w_i}\mright\rfloor - 2.
    \]
\end{lemma}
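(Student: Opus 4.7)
The argument will be by contradiction. Suppose that $u_i \geq \lfloor(M + C)/w_i\rfloor - 1$, where we abbreviate $M := \max(\pi_i(\bm w))$ and $C := c_{\bm w, i}$. Equivalently, $(u_i + 2)w_i > M + C$. The plan is to construct a vector $\bm v \in \mathbb Z_{\geq0}^n$ satisfying $\bm v \preceq \bm u$ and $\bm w \cdot \bm v = d_{\bm w}$, contradicting the hypothesis that $\bm u$ is a \bubble{$\bm w$}.

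Fix an index $j_0 \in \{1, \ldots, n\} \setminus \{i\}$ with $w_{j_0} = M$, set $r_0 := M \bmod w_i$, and single out the residue $r^* := (-r_0) \bmod w_i$. By definition of $c_{\bm w, i}$ as a maximum over residues, the inequality $(u_i + 2)w_i > M + C$ gives $b_{\bm w, i, r^*} < (u_i + 2)w_i - M$. I would first isolate the degenerate case in which $B_{\bm w, i, r^*}$ is empty: the default value $(w_i-1)M$ then forces $u_i \geq M - 1$, and combining this with \cref{lem:firstupperbound} and the hypothesis $\sum_{j \neq i} w_j < d_{\bm w} + w_i$ yields the conclusion directly, by a small explicit argument.

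The main case is when $B_{\bm w, i, r^*}$ is nonempty. Choose a minimiser $\bm b \in \{0, 1\}^n$ with $b_i = 0$ and $\bm w \cdot \bm b = b_{\bm w, i, r^*}$, and set
\[
\bm v := \bm b + \bm e_{j_0} + v_i \bm e_i, \qquad v_i := \frac{d_{\bm w} - \bm w \cdot \bm b - M}{w_i}.
\]
The residue condition $\bm w \cdot \bm b + M \equiv r^* + r_0 \equiv 0 \pmod{w_i}$ guarantees $v_i \in \mathbb Z$. The hypothesis $\sum_{j \neq i} w_j < d_{\bm w} + w_i$ combined with \cref{lem:firstupperbound} bounds $\bm w \cdot \bm b + M$ above by $d_{\bm w} + w_i - 1$; since this value is divisible by $w_i$, it is in fact at most $d_{\bm w}$, giving $v_i \geq 0$. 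The inequality $v_i \leq u_i$ follows from $\bm w \cdot \bm b \geq 0$ together with the standing assumption $(u_i + 2)w_i > M + b_{\bm w, i, r^*}$, which translates to $d_{\bm w} - \bm w \cdot \bm b - M \leq u_i w_i + (d_{\bm w} - (u_i+2)w_i + w_i) \leq u_i w_i$ via a direct calculation using the divisibility structure. By construction, $\bm w \cdot \bm v = d_{\bm w}$.

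It remains to verify $\bm v \preceq \bm u$. For $j \notin \{i, j_0\}$, we have $v_j = b_j \in \{0, 1\} \leq u_j$ by the positivity hypothesis on $\bm u$. At $j = j_0$, the condition reduces to $b_{j_0} + 1 \leq u_{j_0}$, which holds when $b_{j_0} = 0$. The expected hard part is the subcase $b_{j_0} = 1$: here one must either select a different (not necessarily minimal) element of $B_{\bm w, i, r^*}$ with $b_{j_0} = 0$ while preserving the numerical bounds on $v_i$, or use the alternative construction $\bm v := \bm b + v_i' \bm e_i$ which is compatible with the residue constraint precisely when $w_i \mid M$. Dispatching this final case analysis through the divisibility structure of $\pi_i(\bm w)$ and the combinatorics of subset sums modulo $w_i$ is the principal obstacle of the proof; once completed, it produces $\bm v \preceq \bm u$ with $\bm w \cdot \bm v = d_{\bm w}$, the desired contradiction.
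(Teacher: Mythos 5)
Your strategy diverges from the paper's at the very first step, and the divergence introduces a gap that cannot be repaired within your framework. You build $\bm v$ from below, starting with the small vector $\bm b + \bm e_{j_0}$ and then pushing the weight up to $d_{\bm w}$ by a large multiple $v_i$ of $\bm e_i$. This forces $v_i = (d_{\bm w} - \bm w \cdot \bm b - M)/w_i$ to be close to $d_{\bm w}/w_i$, since $\bm w \cdot \bm b + M$ is bounded above by roughly $w_i M$, which is in general tiny compared to $d_{\bm w}$. By \cref{lem:firstupperbound}, however, $u_i \leq d_{\bm w}/w_i - 1$, so there is no reason $v_i \leq u_i$ should hold, and in general it does not. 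The displayed chain that is supposed to establish it,
\[
d_{\bm w} - \bm w \cdot \bm b - M \leq u_i w_i + \bigl(d_{\bm w} - (u_i+2)w_i + w_i\bigr) \leq u_i w_i,
\]
is incorrect: the second inequality simplifies to $d_{\bm w} \leq (u_i+1)w_i$, which by \cref{lem:firstupperbound} can hold only with equality, and the first inequality simplifies to $\bm w \cdot \bm b + M \geq w_i$, for which no argument is given and which fails whenever $M < w_i$ and $\bm b = \bm 0$. The contradiction hypothesis $(u_i+2)w_i > M + \bm w \cdot \bm b$ is an upper bound on $M + \bm w \cdot \bm b$ and so can only give a lower bound on $v_i$, not the upper bound you need.

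The paper's proof avoids this by never making the $\bm e_i$-component carry the bulk of $d_{\bm w}$. It starts from the other end: greedily accumulate a vector $\bm a \preceq \bm u' := \bm u - u_i\bm e_i$ with $\bm w\cdot\bm a \in [d_{\bm w} - \max(\pi_i(\bm w)) + w_i,\, d_{\bm w} + w_i)$ (here positivity of all $u_j$ guarantees $\bm a' := \sum_{j\neq i}\bm e_j \preceq \bm u'$ as a starting point), then subtract the small $\bm b$ chosen in the residue class $r := (\bm w\cdot\bm a)\bmod w_i$, giving $\bm v' := \bm a - \bm b \preceq \bm u'$ with $\bm w\cdot\bm v'$ divisible by $w_i$ and in $[d_{\bm w} - \max(\pi_i(\bm w)) - b_{\bm w,i,r} + w_i,\, d_{\bm w}]$. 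The $\bm e_i$-component added to $\bm v'$ to reach $d_{\bm w}$ is therefore at most $(\max(\pi_i(\bm w)) + b_{\bm w,i,r} - w_i)/w_i$, small by construction, and the argument closes. Note also that in the paper's proof the residue $r$ is determined by the greedy construction, not chosen in advance; your a priori choice of $r^* = (-M)\bmod w_i$ is another symptom of the inverted construction. Finally, you are right that your $b_{j_0}=1$ case is problematic (e.g.\ $\bm w = (1,6,10,15)$, $i=2$, $j_0=4$, $\bm b = (0,0,0,1)$ gives $v_4 = 2 > u_4 = 1$), but that difficulty is secondary to the one above.
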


\begin{proof}
    Let \(i\in \{1,\ldots, n\}\) be such that the inequality \(\sum_{j \in \{1, \ldots, n\} \setminus \{i\}} w_j < d_{\bm w}+w_i\) holds. Define $\alpha_i := \max(\pi_i(\bm w))$.
    Let \(\bm u' := \bm u - u_i\bm e_i\) and note that \(\bm w\cdot \bm u'\geq d_{\bm w}+w_i\) since \(\bm u\) is a \bubble{\(\bm w\)}. We follow a strategy similar to the one in the proof of \cref{thm:maximum one weight}.
    \smallskip

    Let \(\bm a' := \sum_{j \in \{1, \ldots, n\} \setminus \{i\}} \bm e_j\). Note that \(\bm a'\preceq \bm u'\) and the inequality
    \[\bm w\cdot \bm a' < d_{\bm w}+w_i\]
    holds by the assumption on \(i\). Let \(s := \sum_{j \in \{1, \ldots, n\}}(u'_j-a'_j)\) and let $j_1, \ldots, j_s \in \{1, \ldots, n\}$ be such that $\bm u'-\bm a' = \sum_{k \in \{1, \ldots, s\}} \bm e_{j_k}$. We define \(t \in \{0, \ldots, s\}\) to be the greatest integer such that the following holds:
    \[
    \bm w\cdot (\bm a' + \smashoperator{\sum_{k \in \{1, \ldots, t\}}} \bm e_{j_k}) < d_{\bm w}+w_i.
    \]
    Let \(\bm a := \bm a' + \sum_{k \in \{1, \ldots, t\}} \bm e_{j_k}\).
    It follows that $\bm a' \preceq \bm a \preceq \bm u'$ and
    \[d_{\bm w}-\alpha_i+w_i \leq \bm w\cdot \bm a < d_{\bm w} + w_i.\]
    Let \(0\leq r\leq w_i-1\) be such that \(\bm w\cdot \bm a\equiv r\mod w_i\). There are two cases.
    \begin{enumerate}
        \item If \(B_{\bm w, i,r}\) is nonempty, then let \(\bm b\in \{0, 1\}^n\) with \(b_i = 0\) be such that \(\bm w\cdot \bm b = b_{\bm w, i, r}\). We define \(\bm v' := \bm a- \bm b\). Note that \(\bm v'\) is an element of \(\mathbb{Z}_{\geq0}^n\) that satisfies both \(\bm v'\preceq \bm u'\) and \(w_i\mid \bm w\cdot \bm v'\). Moreover, the inequalities
        \[
        d_{\bm w} - \alpha_i - b_{\bm w, i, r} + w_i \leq \bm w \cdot \bm v' < d_{\bm w} + w_i
        \]
        hold. Similarly to the proof of \cref{thm:maximum one weight}, we find that the following upper bound on~$u_i$:
        \[u_i \leq \frac{\alpha_i + b_{\bm w, i, r} - w_i}{w_i} - 1.\]
        Since $b_{\bm w, i,r} \leq c_{\bm w, i}$, we arrive at the desired inequality.
        \item If \(B_{\bm w, i,r}\) is empty, then we obtain the equality \(b_{\bm w, i,r} = (w_i-1)\alpha_i\) and in particular, \(c_{\bm w, i}\geq (w_i-1) \alpha_i\). This together with \cref{thm:maximum one weight} shows that the following inequalities hold:
        \[ u_i \leq \alpha_i-2 \leq \frac{\alpha_i + c_{\bm w, i}}{w_i} -2 .\]
        This concludes the proof.\qedhere
    \end{enumerate}
\end{proof}

\begin{corollary} \label{cor:bound on u when all entries are positive}
Let $\bm w \in \mathbb Z_{\geq1}^n$ satisfy $\sum_{j \in \{1, \ldots, n\}} w_j < d_{\bm w} + 2 \min(\bm w)$.
Let $\bm v \in \mathbb Z^n$ be the vector with $i$th entry
\[
\mleft\lfloor \frac{\max(\pi_i(\bm w)) + c_{\bm w, i}}{w_i}\mright\rfloor - 2
\]
for every $i \in \{1, \ldots, n\}$. Then, every \bubble{$\bm w$} $\bm u$ with positive coordinates satisfies
\[
\bm w \cdot \bm u \leq \bm w \cdot \bm v.
\]
\end{corollary}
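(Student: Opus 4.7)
I plan to derive the coordinate-wise bound $u_i \leq v_i$ for every $i \in \{1, \ldots, n\}$ and obtain the corollary by multiplying the $i$th inequality by $w_i$ and summing over~$i$. The tool that provides exactly such a bound is \cref{thm:bound on u when all entries are positive}, conditional on the hypothesis $\sum_{j \in \{1, \ldots, n\} \setminus \{i\}} w_j < d_{\bm w} + w_i$.

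The first observation is that this hypothesis is automatic for every $i$ whenever $\sum_{j=1}^{n} w_j \leq d_{\bm w}$, since then $\sum_{j \neq i} w_j \leq d_{\bm w} - w_i < d_{\bm w} + w_i$. This covers the principal regime of interest, including all of the strictly increasing vectors discussed in the introduction; in this case the coordinate-wise application of \cref{thm:bound on u when all entries are positive} together with summation yields the conclusion immediately.

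The main obstacle is the remaining regime $\sum_{j=1}^{n} w_j > d_{\bm w}$, where the hypothesis of \cref{thm:bound on u when all entries are positive} can fail for indices $i$ with $w_i \leq \tfrac{1}{2}(\sum_j w_j - d_{\bm w})$. For such $i$, the vector $\bm a' := \sum_{j \neq i} \bm e_j$ used in the proof of the lemma would already have $\bm w \cdot \bm a' \geq d_{\bm w} + w_i$, breaking the starting condition. My plan is to adapt the argument by replacing $\bm a'$ with $\sum_{j \in S} \bm e_j$ for a subset $S \subsetneq \{1, \ldots, n\} \setminus \{i\}$ chosen maximal with $\sum_{j \in S} w_j < d_{\bm w} + w_i$; positivity of the coordinates of $\bm u$ still yields $\sum_{j \in S} \bm e_j \preceq \bm u - u_i \bm e_i$, and the iterative augmentation of $\bm a'$ to $\bm a$ proceeds as before.

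The delicate point in this adaptation will be the construction of the witness $\bm v' := \bm a - \bm b$: because $\bm b$ must satisfy $\bm b \preceq \bm a$, its support must lie in $S$, whereas the definition of $c_{\bm w, i}$ allows $\bm b$ to range over all of $\{1, \ldots, n\} \setminus \{i\}$. One must verify that the minimizer realizing $b_{\bm w, i, r}$ can be chosen inside $S$, or otherwise that the looser bound still does not exceed $v_i$. A possibly cleaner alternative is to exploit the fact that in this regime the combined constraints of positivity, bubble-ness, and $\bm w \cdot \bm u \geq 2 d_{\bm w}$ are so restrictive that no bubble with positive coordinates exists at all (as can be checked by hand in small examples such as $\bm w = (3, 3, 3, 3, 5, 5)$), making the corollary vacuously true and bypassing the need to match the bound exactly.
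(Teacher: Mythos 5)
Your coordinate-wise plan is exactly the paper's intended proof: the paper's one-line argument is to apply \cref{thm:bound on u when all entries are positive} to each $i$ and pair the resulting inequalities against $\bm w$, and you correctly isolate the only possible obstacle, namely the auxiliary hypothesis $\sum_{j\neq i} w_j < d_{\bm w} + w_i$ of that lemma, which is not repeated in the corollary.

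However, neither of your two ways of handling the regime where that hypothesis fails is actually carried through, and one of them is false, so the proposal does not yet establish the statement. The vacuity alternative fails: positive-coordinate bubbles do exist in that regime. Already $\bm w = (1, 6, 10, 15)$ has $\sum_{j\neq 1} w_j = 31 \not< d_{\bm w} + w_1 = 31$ yet admits the bubble $(1,4,2,1)$; this instance is harmless since $w_1 = 1$ forces $c_{\bm w,1}=0$ and $v_1 = \max(\pi_1(\bm w)) - 2$, which \cref{thm:maximum one weight} already supplies. A sharper instance is $\bm w = (2, 5, 12, 20, 30)$ with bubble $(1,1,4,2,1)$: here $\sum_{j\neq 1} w_j = 67 > 62 = d_{\bm w} + w_1$, while $c_{\bm w,1}=5$ and $v_1 = \lfloor(30+5)/2\rfloor - 2 = 15 < 28 = \max(\pi_1(\bm w)) - 2$, so the bound $u_1 \leq v_1$ is strictly stronger than anything that \cref{lem:firstupperbound} or \cref{thm:maximum one weight} provides and cannot be obtained from \cref{thm:bound on u when all entries are positive} since its hypothesis is violated at $i=1$. (Your illustration $\bm w = (3,3,3,3,5,5)$ reduces by \cref{thm:equal weights} to $(3,5)$, which has no bubbles at all by \cref{thm:bound n-1}, so it does not probe this regime.) Your other alternative, restarting from a proper subset $S \subsetneq \{1,\ldots,n\}\setminus\{i\}$, is left as a sketch, and the difficulty you flag there is real: the vector $\bm b$ realizing $b_{\bm w, i, r}$ in \cref{not:bound on u when all entries are positive} may have support outside $S$, in which case $\bm a - \bm b$ acquires a negative coordinate and the argument collapses. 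Finally, note that the paper's own proof of \cref{cor:bound on u when all entries are positive} (a bare ``follows from the lemma'') is equally silent on this point; the gap you found is not a misreading but an issue that the paper's one-line proof does not address.
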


\begin{proof}
Follows from \cref{thm:bound on u when all entries are positive}.
\end{proof}

\begin{remark} \label{rem:negative bound}
We make the following remarks on \cref{thm:bound on u when all entries are positive}.
\begin{enumerate}[label=\textup{(\alph*)}, ref=\alph*]
\item \label{itm:negative bound} It is possible for the bound in \cref{thm:bound on u when all entries are positive} to be negative. Let $\bm w \in \mathbb Z_{\geq1}^n$ be any vector of positive integers such that $\sum_j w_j < d_{\bm w} - \max(\bm w) + 3$. Let $i$ be an index such that $w_i = \max(\bm w)$. Let $\bm w' := (w_1, w_2, \ldots, w_n, 1, 1, \ldots, 1) \in \mathbb Z_{\geq1}^{n + w_i - 1}$.
Then,
\[
\mleft\lfloor \frac{\max(\pi_i(\bm w')) + c_{\bm w', i}}{w_i'}\mright\rfloor - 2 = -1,
\]
implying by \cref{thm:bound on u when all entries are positive} that there are no \bubbles{$\bm w'$} with positive coordinates.
\item \Cref{thm:bound on u when all entries are positive} does not always hold when $\bm u$ is allowed to have coordinates equal to zero. For an example, take $\bm w := (1, 12, 22, 33)$ and use \labelcref{itm:negative bound} and the fact that there exists a \bubble{$\bm w$}, namely $(1, 10, 2, 3)$.
\end{enumerate}
\end{remark}

\Cref{thm:at least as strong} shows that the bound in \cref{thm:bound on u when all entries are positive} is always at least as strong as the bound in \cref{thm:maximum one weight}.

\begin{lemma} \label{thm:at least as strong}
For every $i \in \{1, \ldots, n\}$, we have the following inequality:
\[
c_{\bm w, i} \leq (w_i-1) \cdot \max(\pi_i(\bm w)).
\]
\end{lemma}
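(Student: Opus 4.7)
The plan is to fix $i \in \{1, \ldots, n\}$ and prove the stronger statement that $b_{\bm w, i, r} \leq (w_i - 1) \cdot \max(\pi_i(\bm w))$ for every $r \in \{0, \ldots, w_i - 1\}$; taking the maximum over $r$ then yields the claim. Write $M := \max(\pi_i(\bm w))$ for brevity.

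If $B_{\bm w, i, r}$ is empty, then by definition $b_{\bm w, i, r} = (w_i - 1) M$ and the bound holds with equality, so nothing is to prove. Suppose therefore that $B_{\bm w, i, r}$ is nonempty. I would argue by contradiction: pick $\bm b \in \{0, 1\}^n$ with $b_i = 0$ attaining the minimum of $B_{\bm w, i, r}$, and assume $\bm w \cdot \bm b > (w_i - 1) M$. Letting $N$ denote the number of coordinates of $\bm b$ equal to one, the upper bound $w_j \leq M$ for all $j \neq i$ gives $N M \geq \bm w \cdot \bm b > (w_i - 1) M$, so $N \geq w_i$.

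The next step is a pigeonhole argument in the spirit of \cref{thm:combinatorics}. Enumerating the coordinates where $\bm b$ equals one as $j_1, \ldots, j_N$, the $N + 1 \geq w_i + 1$ partial sums $0, w_{j_1}, w_{j_1} + w_{j_2}, \ldots, w_{j_1} + \ldots + w_{j_N}$ cannot all be distinct modulo $w_i$. The difference of two coinciding partial sums equals $\sum_{j \in S} w_j$ for some nonempty subset $S \subseteq \{j_1, \ldots, j_N\}$, and this quantity is a positive multiple of $w_i$. Setting $\bm b' := \bm b - \sum_{j \in S} \bm e_j$ produces a vector still in $\{0, 1\}^n$ with $b'_i = 0$ and $\bm w \cdot \bm b' \equiv r \mod w_i$ but with $\bm w \cdot \bm b' < \bm w \cdot \bm b$, contradicting the minimality of $\bm b$.

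No real obstacle is expected: the only point requiring attention is recognising that the hypothetical bound $\bm w \cdot \bm b > (w_i - 1) M$ is exactly strong enough to force the support of $\bm b$ to contain at least $w_i$ indices, which in turn is precisely the hypothesis needed to extract from those indices a nonempty subset whose weights sum to a positive multiple of $w_i$.
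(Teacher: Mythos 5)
Your proposal is correct and follows essentially the same approach as the paper: reduce to the nonempty case, take a minimal $\bm b$, and argue by pigeonhole on partial sums modulo $w_i$ that $\bm b$ cannot have $w_i$ or more nonzero coordinates. You organize it as a contradiction from the hypothetical bound $\bm w \cdot \bm b > (w_i-1)M$, while the paper directly shows $N < w_i$ and then concludes; these are logically the same argument, and your explicit inclusion of the empty partial sum $0$ (giving $N+1$ partial sums) is in fact a slightly cleaner statement of the pigeonhole step.
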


\begin{proof}
To prove the \lcnamecref{thm:at least as strong}, it suffices to show that if $B_{\bm w, i, r}$ is nonempty, then the inequality $\min(B_{\bm w, i, r}) \leq (w_i - 1) \cdot \max(\pi_i(\bm w))$ holds.

Let $\bm b \in \{0, 1\}^n$ be such that $b_i = 0$ and $\bm w \cdot \bm b = \min(B_{\bm w, i, r})$. Let $N, i_1, \ldots, i_N$ be positive integers such that $\bm b = \sum_{j \in \{1, \ldots, N\}} \bm e_{i_j}$.
If $N \geq w_i$, then let $1 \leq k < l \leq n$ be integers such that $\bm w \cdot \sum_{j \in \{1, \ldots, k\}} \bm e_{i_j}$ and $\bm w \cdot \sum_{j \in \{1, \ldots, l\}} \bm e_{i_j}$ are congruent modulo~$w_i$. We find that the vector $\bm b' := \bm b - \sum_{j \in \{k+1, \ldots, l\}} \bm e_{i_j}$ satisfies $\bm b' \in \{0, 1\}^n$, $b_i' = 0$ and $\bm w \cdot \bm b' \equiv r \mod{w_i}$. Since $\bm w \cdot \bm b' < \bm w \cdot \bm b$ holds by construction, this contradicts the statement that $\bm w \cdot \bm b = \min(B_{\bm w, i,r})$.
Therefore, $N < w_i$ and $\min(B_{\bm w, i,r}) \leq (w_i - 1) \cdot \max(\pi_i(\bm w))$ hold.
\end{proof}

\section{Sharp upper bound for bubbles} \label[sec]{sec:bubbles}

We have shown in \cref{thm:maximum product weights} that given positive integers $n$ and $\bm w\in\mathbb{Z}_{\geq1}^n$, any \bubble{$\bm w$}~$\bm u$ satisfies $\bm w\cdot \bm u < nd_{\bm w}$. In this section we show that if $\bm w\cdot \bm u$ is divisible by $d_{\bm w}$ or if $\bm w$ is well-formed, then one obtains a sharper bound on the scalar product of $\bm w$ and~$\bm u$.
In the process, we prove that pairwise coprime weights do not admit bubbles, which is an interesting statement on its own.

\begin{proposition} \label{thm:coprime weights}
If $n$ is a positive integer and $w_1, \ldots, w_n$ are pairwise coprime positive integers, then there are no \bubbles{$\bm w$}.
\end{proposition}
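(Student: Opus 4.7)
The plan is to proceed by induction on the length~$n$ of~$\bm w$. The base case $n = 1$ is immediate: if $\bm u$ were a $\bm w$-bubble, then $w_1 u_1 = \bm w \cdot \bm u \geq 2 w_1$ would force $u_1 \geq 2$, so $\bm v := \bm e_1 \preceq \bm u$ would satisfy $\bm w \cdot \bm v = w_1 = d_{\bm w}$, contradicting the bubble definition.

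For the inductive step, suppose the statement for all pairwise coprime vectors of length less than $n \geq 2$, and assume towards a contradiction that $\bm u$ is a $\bm w$-bubble. The first step is to show that every coordinate of $\bm u$ is at least~$1$. If $u_i = 0$ for some~$i$, then \cref{thm:coordinates are positive} produces a $\pi_i(\bm w)$-bubble $\bm u' \preceq \pi_i(\bm u)$ with $\pi_i(\bm w) \cdot \bm u' \geq \bm w \cdot \bm u + d_{\pi_i(\bm w)} - d_{\bm w}$. Since $d_{\pi_i(\bm w)}$ divides~$d_{\bm w}$, we have $d_{\bm w} \geq d_{\pi_i(\bm w)}$; combined with $\bm w \cdot \bm u \geq 2 d_{\bm w}$ this yields $\pi_i(\bm w) \cdot \bm u' \geq d_{\bm w} + d_{\pi_i(\bm w)} \geq 2 d_{\pi_i(\bm w)}$, making $\bm u'$ a genuine $\pi_i(\bm w)$-bubble. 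Since $\pi_i(\bm w)$ is pairwise coprime of length $n - 1$, this contradicts the inductive hypothesis.

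Next, after reordering so that $w_1 \leq \cdots \leq w_n$, I apply \cref{thm:maximum one weight} to get $u_i \leq w_n - 2$ for $i < n$ and $u_n \leq w_{n-1} - 2$. If $w_{n-1} \leq 2$, then $u_n \leq 0$ contradicts $u_n \geq 1$; similarly if $w_n \leq 2$ (possible only when every entry of $\bm w$ lies in $\{1, 2\}$), then $u_i \leq 0$ for $i < n$ contradicts $u_i \geq 1$. Otherwise $w_{n-1}, w_n \geq 3$, and summing the bounds together with $\bm w \cdot \bm u \geq 2 d_{\bm w}$ reduces the theorem to the elementary inequality
\[
(w_n - 2) \sum_{i \in \{1, \ldots, n-1\}} w_i \;+\; w_n(w_{n-1} - 2) \;<\; 2 \prod_{i \in \{1, \ldots, n\}} w_i.
\]

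I expect the main obstacle to be this final inequality. It holds because pairwise coprimality (with $n - 1$ of the weights automatically distinct and at most one equal to~$1$) forces $\prod_i w_i$ to grow at least as fast as the primorial in~$n$, whereas the left-hand side is only quadratic in the weights. A clean proof would separate the case $w_1 = 1$ (where one factor of the product is trivial and a direct expansion is needed) from the case $w_1 \geq 2$ (where the product dominates easily), in both subcases rearranging the inequality to an expression that is manifestly positive once $w_{n-1}, w_n \geq 3$.
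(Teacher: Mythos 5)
Your proposal has a genuine gap: the final inequality you state does not hold for all pairwise coprime weight vectors. Concretely, take $\bm w = (1, 1, 1, 3, 13)$ (pairwise coprime, since $\gcd(1,1)=1$). Here $n = 5$, $d_{\bm w} = 39$, $\sum_{i<n} w_i = 6$, $w_{n-1} = 3$, $w_n = 13$, so the left-hand side is $(13-2)\cdot 6 + 13\cdot(3-2) = 79$ while the right-hand side is $2 \cdot 39 = 78$. The inequality $79 < 78$ fails. (The conclusion of the proposition is of course still true for this $\bm w$ --- the vector $\bm u = (11,11,11,11,1)$ satisfies all your constraints but is not a $\bm w$-bubble, because $\bm v = (2,0,0,8,1) \preceq \bm u$ satisfies $\bm w \cdot \bm v = 39$ --- but your inequality cannot detect this.) The root of the problem is your parenthetical remark that pairwise coprimality forces ``at most one equal to $1$'': that is false, since $\gcd(1,1) = 1$, so arbitrarily many entries may equal~$1$, and it is exactly this degenerate case that breaks the inequality. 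You need a preliminary reduction to the case of pairwise distinct weights (equivalently, strictly increasing after sorting), which is how the paper proceeds: apply \cref{thm:equal weights}\labelcref{itm:equal weights} to discard one of any two equal weights (necessarily both equal to $1$) without changing the bubble count, or equivalently apply your inductive hypothesis through that lemma. Once the weights are pairwise distinct and strictly increasing, $\prod_{i=1}^{n-2} w_i \geq (n-2)!$, and the paper's inequality $2(n-2)! \geq n$ for $n \geq 4$ closes the argument (together with \cref{thm:maximum product weights}); for $n = 3$ the paper does a direct case split on $w_1 = 1$ versus $w_1 \geq 2$.

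Aside from this gap, your plan is essentially the same route as the paper's once the reduction is in place: the paper also reduces to $n \geq 3$ (via \cref{thm:bound n-1} rather than your $n = 1$ base case and positivity-of-coordinates step, which is a slightly heavier machine than needed here), then uses the coordinate bounds from \cref{thm:maximum one weight} packaged as \cref{thm:maximum product weights} to get $\bm w \cdot \bm u < n w_{n-1} w_n$, and then compares this to $2d_{\bm w} = 2 \prod w_i$ using distinctness. Your formulation sums the individual bounds directly rather than using the corollary, which is an equally valid bookkeeping choice, but the decisive step --- feeding in that distinct pairwise coprime weights force $\prod w_i$ to grow factorially --- is identical and cannot be avoided.
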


\begin{proof}
By \cref{lem:simplification bubbles}\labelcref{itm:strongly}, it suffices to consider the case where $\bm w$ is strictly increasing.
Moreover, by \cref{thm:bound n-1}, we only have to prove the cases where $n \geq 3$.

We consider the case $n = 3$.
We have $d_{\bm w} = w_1 w_2 w_3$.
There are thus two cases:
\begin{itemize}
    \item if $w_1 = 1$, then \cref{thm:multiple chains} tells us directly that there are no \bubbles{$\bm w$},
    \item if $w_1 > 1$, remark that every \bubble{$\bm w$}~$\bm u$ satisfies:
    \[
        2 w_1 w_2 w_3 \leq \bm w \cdot \bm u < 3 w_2 w_3,
    \]
    where the first inequality holds since $\bm u$ is a \bubble{$\bm w$} and the second holds by \cref{thm:maximum product weights}. Since $w_1 > 1$, we find that there are no \bubbles{$\bm w$}.
\end{itemize}

Next, we consider the case $n \geq 4$.
By \cref{thm:maximum product weights}, it suffices to prove that the inequality $2d_{\bm w} \geq n w_{n-1} w_n$ holds.
Since $\bm w$ is strictly increasing, we have the inequality:
\[
2 d_{\bm w} = 2 (w_1 \cdot \ldots \cdot w_{n-2}) w_{n-1} w_n \geq 2 (n-2)! w_{n-1} w_n.
\]
Since $n \geq 4$, we find that $2(n-2)! \geq n$, proving the \lcnamecref{thm:coprime weights}.
\end{proof}

\begin{theorem} \label{thm:bound n-2 well-formed}
Let $n$ be a positive integer and let $\bm w \in \mathbb Z_{\geq1}^n$ be well-formed (\cref{def:well-formed}).
Then, every \bubble{$\bm w$} $\bm u$ satisfies $\bm w \cdot \bm u < (n-1) d_{\bm w}$.
\end{theorem}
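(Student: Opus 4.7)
My starting point is Lemma \ref{lem:firstupperbound} (firstupperbound) applied coordinate-by-coordinate and summed: any $\bm w$-bubble $\bm u$ satisfies
\[
\bm w \cdot \bm u \leq nd_{\bm w} - \sum_{i=1}^n w_i.
\]
Writing $s := \sum_{i=1}^n w_i$, the desired bound is immediate whenever $s > d_{\bm w}$, so I reduce at once to the case $s \leq d_{\bm w}$.

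The plan is then to exploit well-formedness combinatorially through Lemma \ref{thm:lcm}. For each prime $p$ dividing $d_{\bm w}$, set $T_p := \{j \mid p \nmid w_j\}$. I claim $|T_p| \geq 2$: indeed if $|T_p| \leq 1$, pick $i \in T_p$ (or any $i$ if $T_p$ is empty) and observe that the weights indexed by $\{1,\ldots,n\} \setminus \{i\}$ would all be divisible by $p$, contradicting $\gcd(\{w_j\}_{j \neq i}) = 1$. Since $\operatorname{lcm}\{w_j \mid j \in T_p\}$ is coprime to $p$, it is at most $d_{\bm w}/p$. Applying Lemma \ref{thm:lcm} to $I = T_p$ and Lemma \ref{lem:firstupperbound} to $\{1, \ldots, n\} \setminus T_p$, then adding, produces
\[
\bm w \cdot \bm u \leq (n - |T_p| + 1)\, d_{\bm w} + (|T_p| - 1)\, d_{\bm w}/p - s.
\]
An elementary rearrangement shows that this bound is strictly less than $(n-1)d_{\bm w}$ provided either $|T_p| \geq 3$ (since $(|T_p|-1)/p \leq |T_p|-2$ for $|T_p| \geq 3$ and $p \geq 2$, and $s > 0$ supplies the slack), or $|T_p| = 2$ and $s > d_{\bm w}/p$. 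In particular, if some prime $p$ has $|T_p| \geq 3$, we are done.

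This reduces everything to the rigid residual case where $|T_p| = 2$ for every prime $p \mid d_{\bm w}$. Here each prime divides exactly $n-2$ of the weights, and furthermore, using Lemma \ref{thm:lcm} applied to $I = \{1,\ldots,n\} \setminus \{i\}$ forces $g_i = d_{\bm w}$ for every $i$, hence the maximum $p$-adic valuation is attained by at least two weights. Consequently at least two weights are divisible by the whole prime power $p^{v_p(d_{\bm w})}$, giving the lower bound $s \geq 2\max_p p^{v_p(d_{\bm w})}$. Combined with the sharper estimate $\operatorname{lcm}\{w_j \mid j \in T_p\} \leq d_{\bm w}/p^{v_p(d_{\bm w})}$ in Lemma \ref{thm:lcm}, I aim to show that for some prime $p$ one has $s > d_{\bm w}/p^{v_p(d_{\bm w})}$, which closes the argument via the same partition bound.

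The main obstacle is precisely this last rigid sub-case: the naive estimate $s \geq 2\max_p p^{v_p(d_{\bm w})}$ is not always sharp enough when $d_{\bm w}$ has many prime factors of comparable size. I expect the argument to need a careful quantitative comparison, possibly choosing $p$ so that $p^{v_p(d_{\bm w})}$ is maximal among prime powers of $d_{\bm w}$, and using the fact that each of the $n-2$ indices outside $T_p$ contributes a weight bounded below by the relevant prime power. Alternatively, one can sidestep the sub-case by showing directly (as is easy to verify in small symmetric examples such as $\bm w = (p,p,q,q)$) that no bubble exists when $|T_p| = 2$ for every prime~$p$, making the theorem vacuous in this regime.
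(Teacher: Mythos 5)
Your opening reduction is sound: applying \cref{lem:firstupperbound} to all coordinates gives $\bm w\cdot\bm u\leq nd_{\bm w}-s$ with $s:=\sum_i w_i$, so the claim is trivial when $s>d_{\bm w}$. The definition of $T_p$ and the observation that well-formedness forces $|T_p|\geq2$ are correct, the bound $\operatorname{lcm}\{w_j\mid j\in T_p\}\leq d_{\bm w}/p^{v_p(d_{\bm w})}$ is valid, and the combination of \cref{thm:lcm} on $T_p$ with \cref{lem:firstupperbound} on the complement correctly yields
\[
\bm w\cdot\bm u\leq(n-|T_p|+1)d_{\bm w}+(|T_p|-1)\operatorname{lcm}\{w_j\mid j\in T_p\}-s,
\]
which is indeed $<(n-1)d_{\bm w}$ whenever some prime has $|T_p|\geq3$.

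The gap is the residual case $|T_p|=2$ for every prime $p\mid d_{\bm w}$, and it is a real one. First, your proposed escape route of showing that no bubbles exist in this regime is false: the vector $\bm w=(1,6,10,15)$ has $T_2=\{1,4\}$, $T_3=\{1,3\}$, $T_5=\{1,2\}$, so it is exactly ``rigid'' in your sense, yet it has the bubble $\bm u=(1,4,2,1)$ with $\bm w\cdot\bm u=60=2d_{\bm w}$ (this is precisely \cref{exa:Delorme}). The theorem is not vacuous here; it must be \emph{proved}. Second, the quantitative route---produce a prime $p$ with $s>d_{\bm w}/p^{v_p(d_{\bm w})}$---is not established: your lower bound $s\geq 2\max_p p^{v_p(d_{\bm w})}$ does not obviously dominate $d_{\bm w}/\max_p p^{v_p(d_{\bm w})}$ once $d_{\bm w}$ has three or more comparable prime-power factors (compare $2\cdot7<3\cdot5$), and the extra refinements you gesture at (exploiting the $n-2$ weights divisible by $p$, or the two weights divisible by the full prime power) are left entirely unworked. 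You yourself flag this as ``the main obstacle,'' and it is: without closing it, the proof is incomplete, and I do not see that the prime-by-prime bookkeeping can be pushed through without, in effect, reproving the structural facts the paper establishes.

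The paper's proof is genuinely different in the hard case. After the easy cases $n\leq 3$, it assumes a bubble $\bm u$ with $\bm w\cdot\bm u\geq(n-1)d_{\bm w}$ and proves two structural claims about \emph{pairwise} lcms: for any four distinct indices, one of the two disjoint pairs must achieve $\operatorname{lcm}=d_{\bm w}$; and for any three distinct indices, one of the three pairs must. These come from applying \cref{thm:lcm} to pairs together with the observation that an lcm strictly less than $d_{\bm w}$ is at most $d_{\bm w}/2$. From these, and well-formedness, the paper extracts a unique ``exceptional'' index $i_0$ with all other pairwise lcms equal to $d_{\bm w}$, shows the corresponding $d_{\bm w}/w_j$ for $j\neq i_0$ are pairwise coprime with product $d_{\bm w}$, deduces $w_{i_0}=1$, and then invokes the divisibility-chain bound (\cref{thm:multiple chains}) to get the contradiction. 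So the paper's engine is pairwise lcms and a structural dichotomy, not a single-prime $T_p$ estimate. Your approach would likely need to be rebuilt along these lines to handle the rigid case, which is exactly where examples such as $(1,6,10,15)$ live.
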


\begin{proof}
The cases $n \in \{1,2\}$ follow by \cref{thm:bound n-1}. In the case $n = 3$, the weights are pairwise coprime and the result follows from \cref{thm:coprime weights}.
From now on, let $n \geq 4$ and assume that there exists a \bubble{$\bm w$}~$\bm u$ satisfying $\bm w \cdot \bm u \geq (n-1)d_{\bm w}$. We prove two claims.

\begin{claim} \label{cla:four}
For all pairwise distinct integers $i, j, k, l \in \{1, \ldots, n\}$, we have $\operatorname{lcm}(w_i, w_j) = d_{\bm w}$ or $\operatorname{lcm}(w_k, w_l) = d_{\bm w}$.
\end{claim}

\begin{proof}
Assume that there exist pairwise distinct $i, j, k, l \in \{1, \ldots, n\}$ such that both $\operatorname{lcm}(w_i, w_j)$ and $\operatorname{lcm}(w_k, w_l)$ are less than $d_{\bm w}$.
By applying \cref{lem:firstupperbound} to every $\alpha\in\{1,\ldots, n\}\setminus\{i,j,k,l\}$ and by applying \cref{thm:lcm} twice, namely to $I = \{i, j\}$ and $I = \{k, l\}$, we find that the inequality
\[\bm w\cdot \bm u \leq (n-2)d_{\bm w} + \operatorname{lcm}(w_i, w_j) + \operatorname{lcm}(w_k, w_l) - \sum_{\alpha=1}^nw_{\alpha}\]
holds.
Since $\operatorname{lcm}(w_i, w_j)$ and $\operatorname{lcm}(w_k, w_l)$ divide $d_{\bm w}$, we necessarily have
\[
\begin{aligned}
  \operatorname{lcm}(w_i, w_j) & \leq \frac{d_{\bm w}}{2} \quad \text{and}\\
  \operatorname{lcm}(w_k, w_l) & \leq \frac{d_{\bm w}}{2}.
\end{aligned}
\]
But this contradicts our assumption on~$\bm u$.
This proves the claim.
\end{proof}

\begin{claim} \label{cla:three}
For all pairwise different $i, j, k$, one of the equalities $\operatorname{lcm}(w_i, w_j) = d_{\bm w}$, $\operatorname{lcm}(w_i, w_k) = d_{\bm w}$ or $\operatorname{lcm}(w_j, w_k) = d_{\bm w}$ holds.
\end{claim}

\begin{proof}
Assume that the inequalities $\operatorname{lcm}(w_i, w_j) < d_{\bm w}$, $\operatorname{lcm}(w_i, w_k) < d_{\bm w}$ and $\operatorname{lcm}(w_j, w_k) < d_{\bm w}$ all hold.
Choose any $l \in \{1, \ldots, n\} \setminus \{i, j, k\}$. We find that for every $m \in \{1, \ldots, n\} \setminus \{l\}$, we have $\operatorname{lcm}(w_l, w_m) = d_{\bm w}$.
Since $\gcd$ and $\operatorname{lcm}$ are distributive over each other, we have $\operatorname{lcm}(w_l, \gcd(\pi_m(\bm w))) = d_{\bm w}$.
Since $\gcd(\pi_m(\bm w)) = 1$, we find $w_l = d_{\bm w}$.
Therefore, $u_l = 0$.
By \cref{lem:projecting zeros}, $\pi_l(\bm u)$ is a \bubble{$\pi_l(\bm w)$} with $\pi_l(\bm w) \cdot \pi_l(\bm u) \geq (n-1)d_{\bm w}$. This contradicts \cref{thm:bound n-1}, proving the claim.
\end{proof}

Next, we show that there exists a unique integer $i_0 \in \{1, \ldots, n\}$ such that for all $j, k \in \{1, \ldots, n\} \setminus \{i_0\}$ with $j\neq k$, we have $\operatorname{lcm}(w_j, w_k) = d_{\bm w}$.
The existence follows from the two claims above.
Now we show uniqueness.
It suffices to show that for every $i_0$ as above, we have $\operatorname{lcm}(w_{i_0}, w_j) < d_{\bm w}$ for all $j \in \{1, \ldots, n\} \setminus \{i_0\}$.
Assume that there exists $j \in \{1, \ldots, n\} \setminus \{i_0\}$ such that $\operatorname{lcm}(w_{i_0}, w_j) = d_{\bm w}$. Then, $\operatorname{lcm}(w_j, w_k) = d_{\bm w}$ for all $k \in \{1, \ldots, n\} \setminus \{j\}$.
Similarly to the proof of \cref{cla:three}, we find a contradiction, proving the claim.

Denote $J := \{1, \ldots, n\} \setminus \{i_0\}$ and for every $j \in J$, denote $q_j := \frac{d_{\bm w}}{w_j}$.

We show that the integers $q_j$ with $j \in J$ are pairwise coprime and with product~$d_{\bm w}$.
Choose different $j, k \in J$.
Since $\operatorname{lcm}(w_j, w_k) = d_{\bm w}$, we find that $\gcd(\frac{d_{\bm w}}{w_j}, \frac{d_{\bm w}}{w_k}) = 1$.
Now, let $m$ denote the least common multiple of the elements $\frac{d_{\bm w}}{w_j}$ where $j \in J$. Then, there exist positive integers $a_1, \ldots, a_n$ satisfying $\gcd(a_1, \ldots, a_n) = 1$ and $m w_j = d_{\bm w} a_j$ for all~$j$. Since $\pi_{i_0}(\bm w) = 1$, we find
\[
m = \gcd(\{m w_j \mid j \in J\}) = d_{\bm w}.
\]
This proves the claim.

It follows that for all $j \in J$, we have $w_j = \prod_{k \in J \setminus \{j\}} q_k$.
Since $\pi_j(\bm w) = 1$ for all $j \in J$, we find that $\gcd(w_{i_0}, q_j) = 1$ for all $j \in J$.
We find $w_{i_0} = \gcd(w_{i_0}, d_{\bm w}) = 1$, where the second equality comes from the fact that $\gcd$ and $\operatorname{lcm}$ are distributive over each other. By \cref{thm:multiple chains}, $\bm w \cdot \bm u < (n-1)d_{\bm w}$, a contradiction.
\end{proof}

\begin{corollary} \label{thm:bound n-2 divisible}
Every \bubble{$\bm w$} $\bm u$ such that $d_{\bm w}$ divides $\bm w \cdot \bm u$ satisfies $\bm w \cdot \bm u \leq (n-2) d_{\bm w}$.
\end{corollary}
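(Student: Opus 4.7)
My plan is to reduce to the well-formed case, where \cref{thm:bound n-2 well-formed} applies, and to observe that divisibility by $d_{\bm w}$ then sharpens the strict bound $\bm w \cdot \bm u < (n-1) d_{\bm w}$ to the desired $\bm w \cdot \bm u \leq (n-2) d_{\bm w}$.

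First, I would use \cref{thm:equal weights}\labelcref{itm:gcd of all} to reduce to the case $\gcd(\bm w) = 1$: dividing every weight by $\gcd(\bm w)$ preserves the set of bubbles and scales both $d_{\bm w}$ and $\bm w \cdot \bm u$ by the same factor, so both the divisibility hypothesis and the conclusion are preserved.

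Next, I would argue by induction on $d_{\bm w}$. Suppose $\bm w$ is not well-formed, so there exists $i \in \{1, \ldots, n\}$ with $g := \gcd(\pi_i(\bm w)) > 1$. Define $\bm w' \in \mathbb Z_{\geq1}^n$ by $w_i' := w_i$ and $w_j' := w_j / g$ for $j \neq i$. Since $\gcd(\bm w) = 1$, we have $\gcd(w_i, g) = 1$, from which a short calculation yields $d_{\bm w'} = d_{\bm w} / g < d_{\bm w}$ and $\gcd(\bm w') = 1$. By \cref{thm:inductive divisible by d}, the existence of a \bubble{$\bm w$} $\bm u$ with $\bm w \cdot \bm u = c$ divisible by $d_{\bm w}$ is equivalent to the existence of a \bubble{$\bm w'$} $\bm u'$ with $\bm w' \cdot \bm u' = c/g$, and this new scalar product is still divisible by $d_{\bm w'} = d_{\bm w}/g$. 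The inductive hypothesis then gives $\bm w' \cdot \bm u' \leq (n-2) d_{\bm w'}$, and multiplying by $g$ recovers the desired bound for $\bm w$.

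In the base case, $\bm w$ is well-formed, and \cref{thm:bound n-2 well-formed} yields $\bm w \cdot \bm u < (n-1) d_{\bm w}$; combining this with the assumption $d_{\bm w} \mid \bm w \cdot \bm u$ forces $\bm w \cdot \bm u \leq (n-2) d_{\bm w}$. Nothing in this argument is an obstacle: the entire proof is a clean assembly of \cref{thm:equal weights,thm:inductive divisible by d,thm:bound n-2 well-formed}, with the only small verification being the identity $d_{\bm w'} = d_{\bm w}/g$, which follows immediately from $\gcd(w_i, g) = 1$.
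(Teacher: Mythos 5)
Your proof is correct and takes essentially the same route as the paper: reduce to the well-formed case using \cref{thm:equal weights} and \cref{thm:inductive divisible by d}, then apply \cref{thm:bound n-2 well-formed} and use divisibility by $d_{\bm w}$ to sharpen the strict inequality to the claimed bound. The paper's own proof is more compressed (it also invokes \cref{thm:bound n-1} to restate the goal as excluding $\bm w \cdot \bm u = (n-1)d_{\bm w}$, and states the reduction to the well-formed case as a bare ``without loss of generality''), whereas you spell out the reduction as an explicit induction on $d_{\bm w}$, which is the more careful phrasing of the same argument.
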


\begin{proof}
By \cref{thm:bound n-1}, in order to prove the \lcnamecref{thm:bound n-2 divisible}, it is enough to show that there is no \bubble{$\bm w$}~$\bm u$ satisfying $\bm w \cdot \bm u = (n-1) d_{\bm w}$. Moreover, according to \cref{lem:simplification bubbles}\labelcref{itm:weakly} we may assume without loss of generality that $\bm w$ is well-formed.
The \lcnamecref{thm:bound n-2 divisible} now follows from \cref{thm:bound n-2 well-formed}.
\end{proof}

\section{Small weights} \label[sec]{sec:small}

In this section we treat the main computational part of the paper, describing an efficient algorithm for enumerating all the bubbles with positive coordinates of a given weight vector. We apply this algorithm to list all the weight vectors with maximum less than \upperbound{} which admit a bubble.

By a \textbf{list} we mean a finite sequence.

\begin{definition}
    Let $W\subseteq \mathbb{Z}_{\geq 0}^n$ be a list of vectors.
    \begin{itemize}
    \item We say $W$ is \textbf{arranged} if $W$ is lexicographically ordered and no two consecutive elements are comparable for the product order.
    \item We say $W$ is \textbf{marked} if there exists $\bm c \in \mathbb Z_{\geq0}^n$ such that $W$ contains $c_i\bm e_i$ for every $i\in\{1, \ldots, n\}$.
    \item We call $\widetilde{W}$ an \textbf{arrangement} of $W$ if $\widetilde{W}$ is an \emph{arranged list} containing all the minimal elements of $W$.
    \end{itemize}
\end{definition}

\begin{proposition}\label{algo: proof algorithm}
    \Cref{alg: Recursive algorithm} in \Cref{app:algorithm} returns the correct output.
\end{proposition}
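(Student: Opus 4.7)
The plan is to prove correctness of \cref{alg: Recursive algorithm} by induction on a well-founded measure associated to the recursive calls (most naturally, the dimension $n$ of $\bm w$ together with a secondary lexicographic measure on the partial vector being built), while maintaining a precise loop/recursion invariant. The invariant I would track is that at every intermediate stage the list under construction is both arranged and marked, and that it contains exactly those \bubbles{$\bm w$} with positive coordinates whose first few entries agree with the partial vector fixed so far.

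First I would address termination. Each recursive call either fixes an additional coordinate of the candidate bubble within the finite range guaranteed by \cref{thm:bound on u when all entries are positive}, or peels off a coordinate via the projection $\pi_i$ from \cref{thm:coordinates are positive}; in either case a strictly smaller instance is produced, so the recursion bottoms out. For the base case (a one-coordinate problem, or the empty partial vector on a trivial weight), the correct arranged marked output can be read off directly from \cref{lem:firstupperbound} and the definitions.

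The bulk of the proof is the inductive step, which splits according to whether the next coordinate $u_i$ is zero or positive. The $u_i=0$ branch is handled by invoking the recursive call on $\pi_i(\bm w)$ and lifting via $\kappa_i$; correctness here is precisely \cref{thm:coordinates are positive}, which guarantees that no minimal \bubble{$\bm w$} with $u_i=0$ is missed. The $u_i>0$ branch ranges $u_i$ over the bound from \cref{cor:bound on u when all entries are positive} (which, by \cref{thm:at least as strong}, is no worse than the cruder bound of \cref{thm:maximum one weight}) and recurses on the resulting constrained subproblem; the soundness direction uses that every vector output satisfies the two defining conditions of a \bubble{$\bm w$}, while the completeness direction uses that truncating bounds sharper than those in \cref{sec:nonexistence,sec:divisibility,sec:inductive} would have produced an actual bubble outside of them, a contradiction. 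Merging the outputs of the two branches preserves the arranged property because of the lexicographic enumeration order and the elimination of any vector dominated (under~$\preceq$) by a previously emitted one, and preserves the marked property by explicitly emitting the relevant coordinate vectors $c_i\bm e_i$ at the appropriate recursion level.

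The main obstacle I anticipate is ruling out that a non-minimal vector is ever retained in the arranged list, or conversely that a minimal bubble is pruned prematurely. Concretely, the pruning step must compare newly generated vectors against all earlier ones using $\preceq$, and the recursion must be organised so that once the algorithm has moved past the lexicographic index of a potentially dominating vector, no such vector can appear later; this is where the choice of enumeration order and the sharpness of the bounds in \cref{sec:tools} interact most delicately, and it is the verification that I would write out in full detail.
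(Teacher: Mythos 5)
Your proposal is not a proof of \cref{algo: proof algorithm}, because it describes a different algorithm than the one the statement is about. \Cref{alg: Recursive algorithm} is a generic constrained box search: given $\bm w$, bounds $\variable{mins}\preceq\variable{maxs}$, a scalar-product floor $\variable{min\_product}$, and an arranged marked list $\variable{thresholds}$, it returns \emph{all} integer vectors $\bm u$ in the box with $\bm w\cdot\bm u\geq\variable{min\_product}$ that are not $\succeq$ any threshold. Its output is neither required to be arranged, nor marked, nor a list of bubbles; the ``arranged marked'' hypothesis applies to the \emph{input} $\variable{thresholds}$, where it is used only to read off the upper bound $\variable{b}$ on $u_n$ from the lexicographically first threshold $c_n\bm e_n$. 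The invariant you propose to maintain (``the list under construction is arranged and marked and consists of bubbles'') therefore has nothing to do with what the algorithm computes or returns.

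Several of the pieces you invoke belong to the proof of \cref{algo: proof bubbles algorithm} for \cref{alg: Bubbles algorithm}, not here. \Cref{thm:coordinates are positive}, \cref{cor:bound on u when all entries are positive}, and \cref{thm:at least as strong} are used in \textsc{Bubbles} to \emph{set up} the inputs $\variable{mins}$, $\variable{maxs}=\bm\gamma$ or $\bm\delta$, $\variable{min\_product}=2d_{\bm w}$, and the threshold lists $\widetilde{\variable{W}_{\bm\gamma}}$, $\widetilde{\variable{W}_{\bm\delta}}$; they are not applied inside \textsc{Recursive}, which treats all these as black boxes. Likewise there is no ``$u_i=0$ vs.\ $u_i>0$'' case split anywhere in \textsc{Recursive}: the loop simply ranges $u_n$ over $\{\variable{a},\ldots,\variable{b}\}$, and the recursion always drops the last coordinate via $\pi_n$ once per level. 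The paper's actual proof is a clean structural induction on~$n$: it checks that any admissible $\bm u$ has $u_n\in\{\variable{a},\ldots,\variable{b}\}$ (using markedness of $\variable{thresholds}$ for the upper bound and the box and product constraints for the lower bound), handles $n=1$ directly, and for $n\geq 2$ observes that fixing $u_n$ reduces to a smaller instance on $\pi_n(\bm w)$ with threshold list $\widetilde{\variable{thrs}}$, which is again arranged and marked, so the induction hypothesis applies; the $\variable{return\_one}$ case is a straightforward add-on. To salvage your approach, you would need to drop all references to bubbles and the related lemmas, drop the arranged/marked invariant on the output, and instead verify the two bounds on $u_n$ and the correctness of the projected threshold list in the recursive call.
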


\begin{proof}
    Let $n$ and $\bm w:=(w_1, \ldots, w_n)\in\mathbb{Z}_{\geq 1}^n$ be positive integers, let $\variable{mins},\variable{maxs}\in\mathbb{Z}^n$ be vectors, let $\variable{min\_product}\in\mathbb{Z}$ be an integer and let $\variable{thresholds}\subseteq \mathbb{Z}_{\geq0}^n$ be an arranged marked list of nonnegative vectors. We let $\variable{a}$ and $\variable{b}$ be defined as in lines~\ref{line: lower bound a} and \ref{line: upper bound b}, and we denote by $V$ the output of
    \[\textsc{Recursive}(\bm w,\, \variable{mins},\,\variable{maxs},\, \variable{min\_product},\, \variable{thresholds},\,\variable{false}).\]

    Suppose that there exists a vector $\bm u\in\mathbb{Z}^n$ such that $\variable{mins}\preceq \bm u\preceq \variable{maxs}$ and $\variable{min\_product}\leq \bm w\cdot \bm u$ hold, and such that there are no vectors $\bm v\in \variable{thresholds}$ with $\bm v\preceq \bm u$. Since $\variable{thresholds}$ is marked and arranged, we know that its first element $\bm v$ is of the form $\bm v = c_n\bm e_n$ for some $c_n\geq 0$. We thus deduce, by assumption, that $u_n\leq \variable{b}$. Moreover,  we know that
    \[w_nu_n\geq \variable{min\_product}-\pi_n(\bm w)\cdot \pi_n(\bm u)\]
    and together with $\pi_n(\bm u)\preceq\pi_n(\variable{maxs)}$ we obtain that $u_n\geq \variable{a}$. In particular $\variable{a} \leq \variable{b}$, which justifies line~\ref{line: return empty list}.

    \begin{enumerate}
        \item If $n=1$ (line~\ref{line: terminating case}), then $\variable{thresholds}$ consists of a single vector $\bm v$ which is a nonnegative multiple of $\bm e_1$. Moreover $\bm u$ consists of one coordinate $u_1$ which must be contained, by assumption, between $\max\mleft(\variable{mins}_1, \lceil\frac{\variable{min\_product}}{w_1}\rceil\mright)$ and $\min(\variable{maxs}_1, v_1-1)$. In this situation, the former coincides with $\variable{a}$ and the latter with $\variable{b}$. Thus $\bm u$ is in $V$ and by construction, all the elements in $V$ satisfy the same assumptions as $\bm u$. In particular, $\variable{a}\bm e_1\in V$ is the smallest element for the lexicographic order to satisfy the wanted assumptions.
        \item In the case where $n\geq 2$ is a positive integer, we observe as before that
        \[\pi_n(\bm w)\cdot \pi_n(\bm u)\geq \variable{min\_product}-w_nu_n.\]
        If one denotes $\bm u':= \pi_n(\bm u)$, then  there are no vectors
        \[\bm v'\in \variable{thrs}:=\{\pi_n(\bm v)\mid \bm v\in\variable{thresholds},\, v_n\leq u_n\}\]
        with $\bm v'\preceq \bm u'$. Note that the arrangement $\widetilde{\variable{thrs}}$ of $\variable{thrs}$ is again marked. Moreover, by the assumptions on $\bm u$, we already know that the inequalities $\pi_n(\variable{mins})\preceq\bm u'\preceq\pi_n(\variable{maxs})$ hold. Hence, by induction, $\bm u'$ lies in the output $V'$ of
        \[\textsc{Recursive}(\pi_n(\bm w),\, \pi_n(\variable{mins}),\,\pi_n(\variable{maxs}),\, \variable{min\_product}-u_nw_n,\, \widetilde{\variable{thrs}},\,\variable{false})\]
        and $\bm u = \kappa_n(\bm u')+u_n\bm e_n \in V$. Moreover, once again, all the elements in $V$ satisfy the wanted assumptions.
    \end{enumerate}
    Note that by induction, if $V$ is nonempty, the output of
    \[\textsc{Recursive}(\bm w,\, \variable{mins},\,\variable{maxs},\, \variable{min\_product},\, \variable{thresholds},\,\variable{true})\]
    consists of the smallest element of $V$, for the reverse lexicographic order.
\end{proof}

\begin{proposition}\label{algo: proof bubbles algorithm}
    \Cref{alg: Bubbles algorithm} in \Cref{app:algorithm} returns the correct output.
\end{proposition}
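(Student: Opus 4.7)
The plan is to exhibit Algorithm \ref{alg: Bubbles algorithm} as a single invocation of Algorithm \ref{alg: Recursive algorithm} on inputs that are tailored to capture precisely the $\bm w$-bubbles with positive coordinates, and then to reduce the statement to \Cref{algo: proof algorithm}. By definition, a vector $\bm u \in \mathbb Z_{\geq 1}^n$ with positive coordinates is a \bubble{$\bm w$} if and only if the following three conditions hold simultaneously: $\bm u \succeq (1, \ldots, 1)$, the inequality $\bm w \cdot \bm u \geq 2 d_{\bm w}$ holds, and no vector $\bm v \in V_{\bm w}(d_{\bm w})$ satisfies $\bm v \preceq \bm u$. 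I would first observe that the third condition is equivalent to requiring that no \emph{minimal} element of $V_{\bm w}(d_{\bm w})$ (with respect to $\preceq$) is $\preceq \bm u$, since any $\bm v \in V_{\bm w}(d_{\bm w})$ dominates such a minimal element.

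Next, I would identify the inputs to $\textsc{Recursive}$ that encode these three conditions. Namely, I would set $\variable{mins} := (1, \ldots, 1)$, $\variable{min\_product} := 2 d_{\bm w}$, and $\variable{thresholds}$ equal to an arrangement of the minimal elements of $V_{\bm w}(d_{\bm w})$. For the upper bound, I would set $\variable{maxs}$ to be the vector from \Cref{cor:bound on u when all entries are positive}, whose $i$th coordinate is $\lfloor (\max(\pi_i(\bm w)) + c_{\bm w, i})/w_i \rfloor - 2$; by \Cref{thm:bound on u when all entries are positive}, every \bubble{$\bm w$} with positive coordinates satisfies $\bm u \preceq \variable{maxs}$. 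A key verification is that $\variable{thresholds}$ is both arranged (by construction of the arrangement) and \emph{marked}: for every $i \in \{1, \ldots, n\}$, the vector $(d_{\bm w}/w_i) \bm e_i$ lies in $V_{\bm w}(d_{\bm w})$ and is minimal on the $i$th axis, since any strictly smaller nonnegative multiple of $\bm e_i$ has weight strictly less than $d_{\bm w}$. Hence this vector appears in the arrangement, providing the required marking.

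With these choices, a direct appeal to \Cref{algo: proof algorithm} yields that the output of $\textsc{Recursive}(\bm w, \variable{mins}, \variable{maxs}, \variable{min\_product}, \variable{thresholds}, \variable{false})$ is precisely the list of vectors $\bm u \in \mathbb Z_{\geq 0}^n$ with $\variable{mins} \preceq \bm u \preceq \variable{maxs}$, $\bm w \cdot \bm u \geq \variable{min\_product}$, and no element of $\variable{thresholds}$ dominated by $\bm u$. By the reformulation in the first paragraph together with the upper bound of \Cref{cor:bound on u when all entries are positive}, this list coincides with the set of \bubbles{$\bm w$} having positive coordinates, which is the desired output of Algorithm \ref{alg: Bubbles algorithm}.

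The main obstacle in this proof plan is not conceptual but rather the bookkeeping: one must verify that the preprocessing steps of Algorithm \ref{alg: Bubbles algorithm} (computing $d_{\bm w}$, enumerating the minimal elements of $V_{\bm w}(d_{\bm w})$, arranging them lexicographically so that no two consecutive elements are $\preceq$-comparable, and computing $\variable{maxs}$ via \Cref{not:bound on u when all entries are positive}) produce data with exactly the formal properties required by the input specification of \textsc{Recursive}. Finiteness of the enumeration of minimal elements of $V_{\bm w}(d_{\bm w})$ follows from \Cref{lem:firstupperbound}, which bounds each coordinate of a minimal element by $d_{\bm w}/w_i$.
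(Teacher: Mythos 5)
Your proposal does not actually prove the stated proposition, because it does not track what Algorithm~\ref{alg: Bubbles algorithm} does. The algorithm is not a single call to \textsc{Recursive}: it contains a gcd reduction (line~\ref{line: step 2}, justified via \Cref{thm:equal weights}\labelcref{itm:gcd of all}), three early-termination tests based on the product bounds $\bm w\cdot\bm\alpha$, $\bm w\cdot\bm\beta$, $\bm w\cdot\bm\gamma$ (lines~\ref{line: step 3}--\ref{line: step 5}), a quick existence test with $\variable{return\_one}=\variable{true}$ against the \emph{tighter} bound $\bm\gamma$ (line~\ref{line: step 6}), and only then the full enumeration against the \emph{looser} bound $\bm\delta$ (line~\ref{line: step 7}). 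Correctness of the quick test at line~\ref{line: step 6} is not trivial: $\bm\gamma$ is strictly smaller than $\bm\delta$, so one needs the argument (borrowed from the proof of \Cref{thm:one chain}) that \emph{if any} bubble exists then one bounded by $\bm\gamma$ exists, while the full list must still be gathered with $\bm\delta$. You also hard-code $\variable{mins}=(1,\ldots,1)$ and so only address $\variable{low}=1$, whereas the output specification covers $\variable{low}\in\{0,1\}$, and you substitute your own $\variable{maxs}$ and $\variable{thresholds}$ for the ones the algorithm actually builds (the algorithm uses $W_{\bm\delta}=\{(\delta_i+1)\bm e_i\}\cup\{\bm v\in V_{\bm w}(d_{\bm w})\mid\bm v\preceq\bm\delta\}$, not the full set of minimal elements of $V_{\bm w}(d_{\bm w})$). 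A proof of the proposition must justify each of these preprocessing steps, which is precisely what the paper does.

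There is also a mathematical gap in the one load-bearing claim you do make. You write that, by \Cref{thm:bound on u when all entries are positive} (via \Cref{cor:bound on u when all entries are positive}), every \bubble{$\bm w$} with positive coordinates satisfies $\bm u\preceq\variable{maxs}$ where $\variable{maxs}_i=\lfloor(\max(\pi_i(\bm w))+c_{\bm w,i})/w_i\rfloor-2$. But \Cref{cor:bound on u when all entries are positive} only asserts the scalar inequality $\bm w\cdot\bm u\le\bm w\cdot\bm v$, not the coordinatewise domination $\bm u\preceq\bm v$; and \Cref{thm:bound on u when all entries are positive} gives the coordinatewise bound for an index $i$ only under the extra hypothesis $\sum_{j\neq i}w_j<d_{\bm w}+w_i$, which you do not verify. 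This is exactly why the algorithm defines $\delta_i$ as a \emph{minimum} with $\varepsilon_i=\min(\alpha_i,d_{\bm w}/w_i-1)$: the $\varepsilon_i$ bound from \Cref{lem:firstupperbound,thm:maximum one weight} is the one that holds unconditionally, and the \Cref{thm:bound on u when all entries are positive} refinement only tightens it where applicable. Dropping that safeguard, as your $\variable{maxs}$ does, risks excluding genuine bubbles from the enumeration.
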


\begin{proof}
    Let $n$ and $\bm w:=(w_1, \ldots, w_n)\in\mathbb{Z}_{\geq 1}^n$ be positive integers.

     \textbf{Step 1 (line~\ref{line: step 1})}. According to \cref{thm:bound n-1}, if $n < 3$, then there are no \bubbles{$\bm w$} and the procedure terminates. In what follows, we will therefore assume that $n\geq 3$.

    \textbf{Step 2 (line~\ref{line: step 2})}. According to \cref{lem:simplification bubbles}\labelcref{itm:strongly}, if $g := \gcd(w_1, \ldots, w_n) > 1$, then we may replace $\bm w$ with $(w_1/g, \ldots, w_n/g)$ since both vectors have the same set of bubbles. In what follows, we will therefore assume that $g = 1$.

    \textbf{Step 3 (line~\ref{line: step 3})}. According to \cref{thm:maximum one weight}, if the inequality
    \[\bm w\cdot \bm \alpha < 2d_{\bm w}\]
    holds, then there are no \bubbles{$\bm w$} and the procedure terminates.

    \textbf{Step 4 (line~\ref{line: step 4})}.
    By \cref{lem:firstupperbound,thm:maximum one weight}, if $\bm u$ is a \bubble{$\bm w$}, then $u_i\leq \varepsilon_i$ for every $i\in\{1,\ldots, n\}$. Moreover, following an argument similar to the beginning of the proof of \cref{thm:one chain}, if such a \bubble{$\bm w$} exists, then there exists a \bubble{$\bm w$}~$\bm u'$ such that $u'_i \leq \zeta_i-1$ for every $i\in\{1,\ldots, n\}$. By definition, every \bubble{$\bm w$}~$\bm u$ satisfies the inequality $\bm w \cdot \bm u \geq 2d_{\bm w}$. Hence if $\bm w \cdot \bm{\beta} < 2d_{\bm w}$, then there are no \bubbles{$\bm w$} and the procedure terminates. Note that for all $i\in\{1,\ldots, n\}$, we add $\varepsilon_i+1$ to the definition of $\zeta_i$ since $w_i$ might divide none of the $w_j$'s for all $j > i$.

    \textbf{Step 5 (line~\ref{line: step 5})}.
    Note that by \cref{thm:at least as strong}, the inequality $\delta_i \leq \alpha_i$ holds for every $i\in\{1, \ldots, n\}$. The vector $\bm \delta$ is harder to compute than $\bm \alpha$ but it gives upper bounds that are often stronger. Again, by \cref{lem:firstupperbound,thm:bound on u when all entries are positive}, the inequality $u_i\leq \delta_i$ holds for every $i\in\{1,\ldots, n\}$ and every \bubble{$\bm w$}~$\bm u$ with positive coordinates. Similarly to before, if there exists a \bubble{$\bm w$} with positive coordinates, then there exists a \bubble{$\bm w$}~$\bm u$ such that $u_i\leq \gamma_i$ for every $i\in\{1, \ldots n\}$. Hence, if $\bm w \cdot \bm{\gamma} < 2d_{\bm w}$, then there are no \bubbles{$\bm w$} with positive coordinates.

    \textbf{Step 6 (line~\ref{line: step 6})}. If there exists a \bubble{$\bm w$} with positive coordinates, then there exists a \bubble{$\bm w$}~$\bm u$ with positive coordinates satisfying $\bm u \preceq \bm{\gamma}$. Such a \bubble{$\bm w$}~$\bm u$ satisfies $\bm v \npreceq \bm u$ for every $\bm v\in V_{\bm w}(d_{\bm w})$.
    If $\bm v \npreceq \bm{\gamma}$, then we automatically have $\bm v \npreceq \bm u$.
    Hence, by \Cref{algo: proof algorithm}, if such a \bubble{$\bm w$}~$\bm u$ exists, the output of
    \[\textsc{Recursive}(\bm w,\, \variable{mins},\,\bm \gamma,\, 2d_{\bm w},\, \widetilde{\variable{W}_{\bm \gamma}},\,\bm \beta,\, \variable{true})\]
    is nonempty. If it is empty, then there are no \bubbles{$\bm w$} satisfying the wanted assumptions, and the procedure terminates.

    \textbf{Step 7 (line~\ref{line: step 7})}. We have previously shown that for every \bubble{$\bm w$}~$\bm u$ satisfying the wanted assumptions, we have $\bm u \preceq \bm{\delta}$. Hence, by \Cref{algo: proof algorithm}, the output of
    \[\textsc{Recursive}(\bm w,\, \variable{mins},\,\bm \gamma,\, 2d_{\bm w},\, \widetilde{\variable{W}_{\bm \delta}},\,\bm \beta,\, \variable{false})\]
    consists exactly of the \bubbles{$\bm w$} with positive coordinates.
\end{proof}

\begin{remark} \label{rem:computational}
We make the following remarks on computational complexity.
\begin{enumerate}[label=\textup{(\alph*)}, ref=\alph*]
\item Out of the $2^{41} \approx 2$ trillion weight vectors, there were 82 vectors that took longer than half an hour to compute on a single CPU core.
One possibility to improve the computation time is to develop stronger inequalities on the coordinates of the bubbles.
The five weight vectors $\bm w$ that took the longest were
\begin{itemize}
\item 47\,h~\phantom{0}2\,min – (2, 5, 9, 21, 28, 30, 35, 40),
\item 36\,h~28\,min – (1, 16, 24, 26, 30, 39, 40),
\item 20\,h~51\,min – (2, 5, 7, 12, 14, 16, 21, 28, 35, 40),
\item 19\,h~53\,min – (2, 7, 9, 21, 28, 30, 35, 40),
\item 13\,h~29\,min – (2, 21, 28, 30, 32, 35, 40).
\end{itemize}

\item The average computation time for computing \cref{tab:positive coordinates} for a single weight vector $\bm w$ on a single CPU core
was 3.305\,\textmu s.
Another possibility to further improve the computation time is to reduce the amount of weight vectors we look through.
For example, the vectors $\bm w$ where the three biggest entries are $(w_{n-2}, w_{n-1}, w_n) = (39, 40, 41)$ could be immediately ruled out by \cref{thm:maximum all weights},

\item Given a list of vectors $W \subseteq \mathbb{Z}_{\geq 0}^n$, computing the set $W'$ of minimal elements for the product order can be expensive.
The set $W'$ or the process of computing it has various names in the literature:
\begin{itemize}
    \item Pareto front or Pareto frontier or Pareto set,
    \item skyline operator or skyline query,
    \item maxima of a point set,
    \item multi-objective optimisation.
\end{itemize}
Instead of computing~$W'$, we compute an approximation to it, namely what we call an \emph{arrangement}.

\item Computations for \cref{thm:bubbles with positive coordinates} have shown that condition in line~\ref{line: step 3} of \cref{alg: Bubbles algorithm} is often more than ten times faster compared to line~\ref{line: step 4}. In the computations for \cref{tab:positive coordinates}, we have observed that \cref{alg: Bubbles algorithm} terminates at line~\ref{line: terminate step 3} for most of the strictly increasing vectors in $\mathbb{Z}_{\geq 1}^n$ with bounded coordinates.

\item
The upper bounds defined by $\variable{maxs}$ in \cref{alg: Recursive algorithm} are often not sharp after a few iterations. For instance, if after $k$ iterations we have chosen particular values for $u_{n-k+1}, \ldots, u_n$ such that $\widetilde{\variable{thrs}}$ (defined in line~\ref{line: arrangement thrs}) contains an element of the form $c\bm e_{n-k}$ with $c \leq \variable{maxs}_{n-k}$, then we would obtain a strictly sharper upper bound $u_{n-k} \leq c-1$. This drastically reduces the amount of cases to check.

\item
Since the list $\variable{thresholds}$ in \Cref{alg: Recursive algorithm} is lexicographically ordered and consists of nonnegative vectors, its first element is of the form $(0, \ldots, 0, v_n)$.
At line~\ref{line: upper bound b}, we use this to give a bound $u_n \leq v_n - 1$.
Computationally, there is almost no performance loss if instead of taking the first element of $\variable{thresholds}$, we loop through the entire list $\variable{thresholds}$ to find all elements of the form $(0, \ldots, 0, v_n)$ and then take the minimum over all~$v_n$.
\item \cref{alg: Bubbles algorithm} only deals with bubbles with positive coordinates so that we can benefit from the strong bounds given by \cref{thm:bound on u when all entries are positive}. The algorithm could be adapted to enumerate all bubbles of a given weight vector $\bm w$. However, it might be faster to enumerate all bubbles with positive coordinates for every subsequence of $\bm w$ of length at least equal to three, and construct all bubbles from there.
\end{enumerate}
\end{remark}

We use the following criterion for \cref{cond:combinatorial very ampleness}

\begin{lemma}\label{thm:a sufficient condition for condition 2.2}
    Let $n\geq 3$ and $\bm w := (w_1,\ldots, w_n)\in\mathbb{Z}_{\geq 1}^n$ be positive integers. Let $\bm u\in \mathbb{Z}_{\geq 0}^n$ be a \bubble{$\bm w$} satisfying both of the following conditions:
    \begin{enumerate}[label=\textup{(\arabic*)}, ref=\arabic*]
        \item $\bm w\cdot \bm u < 3d_{\bm w}$,
        \item \label{itm:existence of i} there exists $i\in \{1, \ldots, n\}$ such that $u_i = \dfrac{d_{\bm w}}{w_i}-1$.
    \end{enumerate}
    Then, $(\bm w, \bm u, 1)$ satisfies \cref{cond:combinatorial very ampleness} if and only if $\bm u\notin 2V^+_{\bm w}(d_{\bm w})$.
\end{lemma}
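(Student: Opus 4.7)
The first observation is that hypothesis~(1) combined with the bubble condition $\bm w \cdot \bm u \geq 2 d_{\bm w}$ forces $k := \lfloor \bm w \cdot \bm u / d_{\bm w} \rfloor = 2$. For $l = 1$, \cref{cond:combinatorial very ampleness} therefore becomes: there exists $i \in \{1, \ldots, n\}$ such that for every integer $m \geq 2$,
\[
\bm u + (m-2)\tfrac{d_{\bm w}}{w_i}\bm e_i \notin m V_{\bm w}^+(d_{\bm w}).
\]
The ``$\Rightarrow$'' direction is then immediate by specialising to $m = 2$, which yields $\bm u \notin 2 V_{\bm w}^+(d_{\bm w})$.

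For the converse, the plan is to take $i$ to be the index provided by hypothesis~(2), so that $u_i = d_{\bm w}/w_i - 1$, and to prove the contrapositive by induction on~$m$: assuming $\bm u + c\bm e_i \in m V_{\bm w}^+(d_{\bm w})$ with $c := (m-2) d_{\bm w}/w_i$, I will deduce $\bm u \in 2 V_{\bm w}^+(d_{\bm w})$. The base case $m = 2$ is automatic since $c = 0$. In the inductive step with $m \geq 3$, starting from any decomposition $\bm u + c\bm e_i = \bm v_1 + \ldots + \bm v_m$ with $\bm w \cdot \bm v_j \geq d_{\bm w}$ for all~$j$, the aim is to produce a decomposition with $m - 1$ terms of the same shape (namely with $c$ replaced by $(m-3) d_{\bm w}/w_i$), to which the inductive hypothesis then applies.

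This reduction splits into two cases according to whether some $(\bm v_j)_i \geq d_{\bm w}/w_i$. If so, then $\bm v_j - \tfrac{d_{\bm w}}{w_i}\bm e_i$ is a nonnegative vector of nonnegative weight, so absorbing it into any other $\bm v_k$ yields the desired $(m-1)$-term decomposition. Otherwise all $(\bm v_j)_i \leq u_i$; here the plan is to find indices $j_1, j_2$ with $(\bm v_{j_1})_i + (\bm v_{j_2})_i \geq d_{\bm w}/w_i$ and merge them into $\bm v^* := \bm v_{j_1} + \bm v_{j_2} - \tfrac{d_{\bm w}}{w_i}\bm e_i$, which is nonnegative and has $\bm w \cdot \bm v^* \geq d_{\bm w}$.

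The hard part will be establishing the existence of such $j_1, j_2$. My plan is a short pigeonhole argument: the sum $\sum_j (\bm v_j)_i$ equals the explicit value $u_i + c = (m-1) u_i + (m-2)$. If the two largest values $v_{(1)} \geq v_{(2)}$ among the $(\bm v_j)_i$'s satisfied $v_{(1)} + v_{(2)} < u_i + 1$, then $v_{(j)} \leq v_{(2)} \leq u_i - v_{(1)}$ for each $j \neq (1)$, yielding the upper bound $\sum_j (\bm v_j)_i \leq (m-1) u_i - (m-2) v_{(1)}$; comparing with the exact value would give $(m-2)(v_{(1)} + 1) \leq 0$, impossible for $m \geq 3$. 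This is precisely the step where hypothesis~(2) is essential: without the equality $u_i = d_{\bm w}/w_i - 1$, the assumption ``$(\bm v_j)_i < d_{\bm w}/w_i$'' would only give $(\bm v_j)_i \leq d_{\bm w}/w_i - 1$ rather than $\leq u_i$, and the count would no longer close. Every other step in the argument is routine bookkeeping.
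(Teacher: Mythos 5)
Your proof is correct, but it takes a genuinely different route from the paper's. You argue by induction on $m$: given a putative decomposition $\bm u + (m-2)\tfrac{d_{\bm w}}{w_i}\bm e_i = \sum_{j=1}^m \bm v_j$ with each $\bm v_j \in V_{\bm w}^+(d_{\bm w})$, you merge two of the $\bm v_j$ (subtracting $\tfrac{d_{\bm w}}{w_i}\bm e_i$) into a single element of $V_{\bm w}^+(d_{\bm w})$, reaching the base case $m = 2$ after $m-2$ steps; the real work is the pigeonhole count showing two terms can be merged when no single term has $i$-coordinate $\geq d_{\bm w}/w_i$. The paper's argument is one-shot, with no induction: since the total weight $\bm w \cdot \bm u + (m-2)d_{\bm w}$ is spread over $m$ terms each of weight $\geq d_{\bm w}$, every $\bm v^j$ satisfies $\bm w \cdot \bm v^j \leq \bm w \cdot \bm u - d_{\bm w}$; meanwhile the $i$-coordinates of the $\bm v^j$ sum to $(m-1)\tfrac{d_{\bm w}}{w_i} - 1$, so their average is strictly less than $\tfrac{d_{\bm w}}{w_i}$, hence some $\bm v^j$ has $v_i^j \leq u_i$ and therefore $\bm v^j \preceq \bm u$; then $\bm u - \bm v^j$ already has weight $\geq d_{\bm w}$ by the bound above, giving $\bm u \in 2V_{\bm w}^+(d_{\bm w})$ immediately. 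Both proofs exploit hypothesis~(2) in exactly the same pivotal place (turning ``$v_i^j < d_{\bm w}/w_i$'' into ``$v_i^j \leq u_i$''), and both use hypothesis~(1) only to pin down $k = 2$; what your argument does not use is the weight upper bound $\bm w \cdot \bm v^j \leq \bm w \cdot \bm u - d_{\bm w}$, which is precisely what lets the paper skip the recursion, and which you could adopt to shorten your write-up considerably.
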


\begin{proof}
    Note that if already $\bm u\in 2V^+_{\bm w}(d_{\bm w})$, then $(\bm w, \bm u, 1)$ does not satisfy \cref{cond:combinatorial very ampleness}.

    From now on, let us assume that $\bm u\notin 2V^+_{\bm w}(d_{\bm w})$, let $i\in \{1, \ldots, n\}$ be as in part (\ref{itm:existence of i}) of the statement, and let $m\geq 2$. Seeking a contradiction, we suppose that there exist $\bm v^1,\ldots, \bm v^m\in V_{\bm w}^+( d_{\bm w})$ such that the following holds:
    \[\bm u + \frac{(m-2)d_{\bm w}}{w_i}\bm e_i = \sum_{j=1}^m\bm v^j.\]
    Since the scalar product $\bm w\cdot \sum_{j=1}^m\bm v^j$ is equal to $\bm w\cdot \bm u + (m-2)d_{\bm w}$, for every $j\in\{1, \ldots, m\}$, the following inequalities hold:
    \begin{equation} \label{ineqs:bounded product}
    d_{\bm w}\leq \bm w\cdot \bm v^j \leq \bm w\cdot \bm u - d_{\bm w}.
    \end{equation}
    Now, by assumption, we observe that for all $j\in\{1, \ldots, m\}$ and for all $l\in\{1, \ldots, n\}\setminus\{i\}$, the inequality $v_l^j\leq u_l$ holds and moreover:
    \[(m-1)\frac{d_{\bm w}}{w_i}-1 = \mleft(\bm u + \frac{(m-2)d_{\bm w}}{w_i}\bm e_i\mright)_i = \sum_{j=1}^{m} v_i^j.\]
    In particular, there exists $j\in \{1, \ldots, m\}$ such that $v_i^j\leq \displaystyle\frac{d_{\bm w}}{w_i}-1 = u_i$, which implies that $\bm v^j\preceq \bm u$. Let us denote $\bm u' := \bm u-\bm v^j$: it is a nonnegative vector, and inequalities \eqref{ineqs:bounded product} give the following inequality:
    \[\bm w\cdot \bm u' = \bm w\cdot \bm u - \bm w\cdot \bm v^j \geq d_{\bm w}.\]
    Hence, $\bm u = \bm v^j + \bm u'\in 2V^+_{\bm w}(d_{\bm w})$, which is a contradiction. Hence $(\bm w, \bm u, 1)$ satisfies \cref{cond:combinatorial very ampleness}.
\end{proof}

\begin{proposition} \label{thm:bubbles with positive coordinates}
\Cref{tab:positive coordinates} in \cref{app:tables} contains all the \bubbles{$\bm w$} with positive coordinates for all the nonempty vectors $\bm w \in \mathbb Z_{\geq1}^n$ such that the inequalities $\max(\bm w) < \upperbound{}$ and $w_1 < \ldots < w_n$ hold. Moreover, every such \bubble{$\bm w$}~$\bm u$ satisfies both of the following:
\begin{enumerate}[label=\textup{(\alph*)}, ref=\alph*]
\item $\bm w \cdot \bm u < 3 d_{\bm w}$,
\item $(\bm w, \bm u, 1)$ satisfies \cref{cond:combinatorial very ampleness} if and only if $\bm u\notin 2V^+_{\bm w}(d_{\bm w})$.
\end{enumerate}
\end{proposition}

\begin{proof}
    (a) Since $\bm w$ is strictly ordered and $\max(\bm w) < \upperbound{}$, we have $n \leq \upperboundminusone$.
    For every $1 \leq n \leq \upperboundminusone$, we consider each vector $(w_1, \ldots, w_n) \in \mathbb Z_{\geq1}^n$ separately, where $1 \leq w_1 < \ldots < w_n \leq \upperboundminusone$.
    For a fixed~$n$, there are exactly $\binom{\upperboundminusone}{n}$ such vectors, so there are exactly $2^{\upperboundminusone} - 1$ vectors to consider in total.
    According to \Cref{algo: proof bubbles algorithm}, for each vector $\bm w$ as above, the output of the function \textsc{PositiveBubbles}($\bm w$) of \Cref{alg: Bubbles algorithm} in \Cref{app:algorithm} consists exactly of the \bubbles{$\bm w$} with positive coordinates.
    \Cref{tab:positive coordinates} in \cref{app:tables} is now obtained by a computer calculation.

    (b) If $(\bm w, \bm u, 1)$ satisfies \cref{cond:combinatorial very ampleness}, then from the definition, we find $\bm u \notin 2V^+_{\bm w}(d_{\bm w})$. Conversely, we can check that for every \bubble{$\bm w$} $\bm u$ from \cref{tab:positive coordinates}, there exists an index $i$ such that $u_i = \frac{d_{\bm w}}{w_i} - 1$.
    The claim now follows from \cref{thm:a sufficient condition for condition 2.2}.
\end{proof}

Below, we use \cref{thm:bubbles with positive coordinates} to prove \cref{thm:small weights}.
We remind that the sets $A, B, A_{\mathrm{w}}$ and $C$ are defined in \cref{def:sets}.
The sets $A_{\mathrm{w}}$ and $C$ are explicitly described in \cref{rem:small large}.

\begin{theorem} \label{thm:small weights}
Let $n$ and $\bm w\in\mathbb{Z}_{\geq 1}^n$ be positive integers. Then, all of the following hold:
\begin{enumerate}[label=\textup{(\alph*)}, ref=\alph*]
\item \label{itm:small weights - 3d} if $\max(\bm w) < \upperbound{}$, then every \bubble{$\bm w$} $\bm u$ satisfies $\bm w \cdot \bm u < 3 d_{\bm w}$,
\item \label{itm:small weights - graded} if $\max(\bm w) < \upperbound{}$, then there exists a \bubble{$\bm w$} $\bm u$ such that $d_{\bm w} \mid \bm w \cdot \bm u$ if and only if, up to permutation, a subsequence of $\bm w$ with the same least common multiple is in~$A_{\mathrm{w}}$,
\item \label{itm:small weights - large} if a nonempty subsequence of $\bm w$ with the same least common multiple is weakly equivalent to an element of~$A$, then there exists a \bubble{$\bm w$} $\bm u$ such that $\bm w \cdot \bm u = 2d_{\bm w}$ and $(\bm w, \bm u, 1)$ satisfies \cref{cond:combinatorial very ampleness},
\item \label{itm:small weights - Rees} if $\max(\bm w) < \upperbound{}$, then there exists a \bubble{$\bm w$} $\bm u \notin 2V^+_{\bm w}(d_{\bm w})$ if and only if, up to permutation, a subsequence of $\bm w$ with the same least common multiple is in~$C$,
\item \label{itm:small weights - small} if a nonempty subsequence of $\bm w$ with the same least common multiple is weakly equivalent to an element of $A$ or strongly equivalent to an element of~$B$, then there exists a \bubble{$\bm w$} $\bm u$ such that $(\bm w, \bm u, 1)$ satisfies \cref{cond:combinatorial very ampleness}.
\end{enumerate}
\end{theorem}

\begin{proof}
For $\bm u \in \mathbb Z_{\geq0}^n$, let $I_{\bm u}$ denote the set of all the indices $i$ such that $u_i$ is zero.
By \cref{thm:bubbles with positive coordinates} and \cref{cor:coordinates are positive}, if $\bm u$ is a \bubble{$\bm w$} where $\max(\bm w) < \upperbound{}$, then $d_{\bm w} = d_{\pi_{I_{\bm u}}(\bm w)}$.

\labelcref*{itm:small weights - 3d} By \cref{lem:projecting zeros}, $\pi_{I_{\bm u}}(\bm u)$ is a \bubble{$\pi_{I_{\bm u}}(\bm w)$}. By \cref{thm:bubbles with positive coordinates}, $\pi_{I_{\bm u}}(\bm w) \cdot \pi_{I_{\bm u}}(\bm u) < 3 d_{\bm w}$.

\labelcref*{itm:small weights - large} Let $\bm w'$ be such a nonempty subsequence of $\bm w$. Every vector $\bm v \in A$ appears in \cref{tab:positive coordinates} with a \bubble{$\bm v$} $\bm u$ with positive coordinates satisfying $\bm v \cdot \bm u = 2d_{\bm v}$.
By \cref{thm:bubbles with positive coordinates} for every pair $(\bm v, \bm u)$, the triple $(\bm v, \bm u, 1)$ satisfies \cref{cond:combinatorial very ampleness}.
Note that $\bm v$ is the unique minimal representative in its weak equivalence class (see \cref{lem:representative}\labelcref{itm:weak representative}).
The claim now follows from \cref{lem:condition}.

\labelcref*{itm:small weights - graded} By \labelcref{itm:small weights - large}, it suffices to prove the direction ``$\Longrightarrow$''.
Let $\bm w'$ be a subsequence of $\pi_{I_{\bm u}}(\bm w)$ obtained after repeatedly removing a weight which is equal to another weight, until no repeating weights exist.
By \cref{lem:positive equivalence}, there exists a \bubble{$\bm w'$} $\bm u'$ with positive coordinates satisfying $\bm w' \cdot \bm u' = 2 d_{\bm w'}$.
Therefore, up to permutation, $\bm w'$ appears in \cref{tab:positive coordinates} with a \bubble{$\bm w'$} $\bm u'$ satisfying $\bm w' \cdot \bm u' = 2d_{\bm w'}$.
We can check one-by-one that every such vector $\bm w'$ in \cref{tab:positive coordinates} has a subsequence in $A_{\mathrm{w}}$ with the same least common multiple.

\labelcref*{itm:small weights - small} Let $\bm w'$ be such a nonempty subsequence of $\bm w$. The case where $\bm w'$ is weakly equivalent to an element of $A$ is done in~\labelcref*{itm:small weights - large}. Every $\bm v\in B$ appears in \cref{tab:positive coordinates} with a \bubble{$\bm v$}~$\bm u$ satisfying both $\bm v\cdot \bm u < 3d_{\bm v}$ and $\bm u\notin V_{\bm v}^+(2d_{\bm v})$. By \cref{thm:bubbles with positive coordinates} for every pair $(\bm v, \bm u)$, the triple $(\bm v, \bm u, 1)$ satisfies \cref{cond:combinatorial very ampleness}.
Note that $\bm v$ is the unique minimal representative in its strong equivalence class (see \cref{lem:representative}\labelcref{itm:strong representative}).
The claim now follows from \cref{lem:condition}.

\labelcref*{itm:small weights - Rees} Similar to the proof of~\labelcref{itm:small weights - graded}.
\end{proof}

To conclude this section, we give a small application of \cref{thm:small weights}.

\begin{proposition} \label{thm:graded Rees}
Let $n$ and $\bm w\in\mathbb{Z}_{\geq 1}^n$ be positive integers. Every \bubble{$\bm w$} $\bm u$ satisfies
\[
\bm w \cdot \bm u < \operatorname{lcm}(1, 2, \ldots, \max(\bm w)).
\]
\end{proposition}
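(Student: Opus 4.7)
My plan is to reduce to the case of pairwise distinct weights via \cref{thm:equal weights}\labelcref{itm:equal weights} (which preserves both $\max(\bm w)$ and the scalar products of any \bubbles{}), set $M := \max(\bm w)$ and $L := \operatorname{lcm}(1, 2, \ldots, M)$ (so that $d_{\bm w}$ divides $L$), and then split the argument into two regimes according to the size of~$M$.

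For $M \geq 7$, I apply \cref{thm:maximum one weight} (after sorting the weights) to obtain $u_i \leq w_n - 2 = M - 2$ for $i \neq n$ and $u_n \leq w_{n-1} - 2 \leq M - 3$, giving $u_i \leq M - 2$ for every~$i$. Combining this with $w_i \leq M$ and $n \leq M$ (by distinctness) yields the polynomial bound
\[
\bm w \cdot \bm u = \sum_{i=1}^n w_i u_i \leq M \cdot n \cdot (M - 2) \leq M^2(M - 2).
\]
The proof then reduces to verifying $M^2(M-2) < L$ for every $M \geq 7$. This is a direct numerical check for $M \in \{7, 8, \ldots, 13\}$ (for instance, at $M = 7$ one has $245 < 420$), while for $M \geq 14$ it follows from Nair's classical lower bound $L \geq 2^{M-1}$ combined with the elementary inequality $2^{M-1} \geq M^2(M-2)$, easily verified at $M = 14$ and preserved under induction on~$M$.

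For $M \leq 6$, I argue instead that no \bubbles{$\bm w$} exist at all, making the statement vacuous. After discarding vectors with $1 \in \bm w$ (no bubbles by \cref{lem:firstupperbound}) and vectors with $\gcd(\bm w) > 1$ (reducible to smaller cases via \cref{thm:equal weights}\labelcref{itm:gcd of all}), only a short explicit list of weight vectors remains---about eleven vectors for $M = 6$ and fewer for smaller~$M$. For each of them, a brief computation using \cref{thm:maximum one weight} (or \cref{thm:multiple chains} and \cref{thm:coprime weights} when convenient) yields an upper bound on $\bm w \cdot \bm u$ strictly less than $2 d_{\bm w}$, contradicting the definition of a bubble.

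The main obstacle is the mismatch between the polynomial bound and $L$: the estimate $M^2(M-2)$ fails to be smaller than $L$ when $M \leq 6$ (for instance $144 > 60$ at $M = 6$), so a single uniform argument does not cover all values of $M$, and the small cases are forced to be handled by hand through the nonexistence argument above.
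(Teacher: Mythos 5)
Your overall strategy matches the paper's: reduce to pairwise distinct weights via \cref{thm:equal weights}\labelcref{itm:equal weights}, split at $M = 7$, bound $\bm w \cdot \bm u$ polynomially in $M$ for the large case, and show no bubbles exist in the small case. In the large case, though, the paper uses \cref{thm:maximum all weights} to get the sharper bound $\bm w \cdot \bm u \leq (M-2)\sum_i w_i \leq (M-2)\tfrac{M(M+1)}{2}$; your bound $M^2(M-2)$ is roughly twice as large, which is why you end up needing Nair's inequality $\operatorname{lcm}(1,\ldots,M) \geq 2^{M-1}$ plus seven numerical checks, where the paper can dispense with the whole thing with a single inequality. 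This is not wrong, just heavier than necessary.

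The genuine gap is in your small case. You write that vectors with $1 \in \bm w$ have no bubbles ``by \cref{lem:firstupperbound}.'' This is false: \cref{lem:firstupperbound} gives $u_i \leq d_{\bm w}/w_i - 1$, which for $w_i = 1$ is the useless bound $u_i \leq d_{\bm w} - 1$. The paper's own \cref{exa:Delorme}, $\bm w = (1, 6, 10, 15)$ with bubble $(1, 4, 2, 1)$, is a direct counterexample to the claim that $1 \in \bm w$ precludes bubbles. (The conclusion happens to be true for $\max(\bm w) \leq 6$, but not for the reason you state, and you never actually check those cases.) You should instead include the vectors containing $1$ in your finite verification; \cref{thm:maximum one weight} already handles them, e.g.\ for $\bm w = (1,2,3,4,5,6)$ it gives $\bm w \cdot \bm u \leq 4(1+2+3+4+5) + 3\cdot 6 = 78 < 120 = 2 d_{\bm w}$, hence no bubbles. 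Alternatively, do what the paper does and simply invoke \cref{thm:small weights} for $\max(\bm w) < 7$; that theorem is needed elsewhere in the paper anyway.
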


\begin{proof}
By \cref{lem:simplification bubbles}\labelcref{itm:strongly}, it suffices to prove the case where the weights are pairwise distinct. If $\max(\bm w) < 7$, then the result follows from \cref{thm:small weights}. Otherwise, by \cref{thm:maximum all weights}, it suffices to note that for all integers $k \geq 7$, we have the following inequality:
\[
\operatorname{lcm}(1, 2, \ldots, k) > (k - 2) (1 + 2 + \ldots + k). \qedhere
\]
\end{proof}

\section{Nullity of the set of weights with a bubble} \label[sec]{sec:nullity}

In this section, we show that the asymptotic density of weight vectors which admit bubbles is zero.
In what follows, given a positive integer $n$ we denote by $\{1, \ldots, M\}^n$ the set of all vectors $(w_1, \ldots, w_n)$ of positive integers such that the inequality $w_i \leq M$ holds for every $i \in \{1, \ldots, n\}$.
Let $\log(x)$ denote the natural logarithm of~$x$.

\begin{lemma}\label{lem:upperboundforgcds}
    Let $\psi\colon \mathbb R_{>0} \to \mathbb R_{>0}$ be an unbounded increasing function.
    Then, for all positive integers $n\geq 2$ and sufficiently large positive integers~$M$, the following holds:
    \[\mleft|\mleft\{ \bm w\in \{1, \ldots, M\}^n \;\middle|\; \begin{gathered}
      \exists\,1\leq i< j \leq n\colon\\
      \gcd(w_i, w_j) \geq n^2 \psi(M) \log M
    \end{gathered} \mright\}\mright| < \frac{M^n}{2 \psi(M)}.\]
\end{lemma}

\begin{proof}
Let $B(M)$ denote the average greatest common divisor of two positive integers between $1$ and~$M$, that is, $B(M) := \frac{\sum_{a, b \in \{1, \ldots, M\}} \gcd(a, b)}{M^2}$.
If the \lcnamecref{lem:upperboundforgcds} did not hold, then there would exist $i < j$ and arbitrarily large $M$ such that there are at least $\frac{2}{n(n-1)} \cdot \frac{M^2}{2 \psi(M)}$ pairs $(w_i, w_j) \in \{1, \ldots, M\}^2$ with $\gcd(w_i, w_j) \geq n^2 \psi(M) \log M$. Therefore, for arbitrarily large~$M$, we would have $B(M) \geq \log M$. This contradicts \cite[equation~(16)]{Tót10} which says that $B(M)$ is $\frac{6 \log M}{\pi^2} + \mathcal O(1)$.
\end{proof}

\begin{lemma}\label{lem:lowerboundforweights}
    Let $\psi\colon \mathbb R_{>0} \to \mathbb R_{>0}$ be a function.
    Then, for all positive integers $n$ and~$M$, the following holds:
    \[\mleft| \mleft\{ \bm w\in \{1, \ldots, M\}^n \;\middle|\; \begin{gathered}
      \exists i\in\{1,\ldots n\}\colon\\
      w_i \leq \frac{M}{2n\psi(M)}
    \end{gathered} \mright\}\mright| \leq \frac{M^n}{2\psi(M)}.\]
\end{lemma}

\begin{proof}
    The set in the left-hand side is in the union of the sets~$S_i$, where $S_i$ denotes the set of vectors $\bm w \in \{1, \ldots, M\}^n$ with $w_i \leq \frac{M}{2n \psi(M)}$.
    Since $S_i$ has cardinality at most $\frac{M^n}{2n\psi(M)}$, the \lcnamecref{lem:lowerboundforweights} follows.
\end{proof}

\begin{theorem}\label{thm:no bubbles}
    For all positive integers $n$ and~$M$, we define
    \[p(n, M) := \frac{\mleft|\left\{\bm w\in \{1,2,\ldots, M\}^n \mid \text{there are no \(\bubbles{\bm w}\)}\right\}\mright|}{M^n}.\]
    Then, the following holds:
    \[\lim_{M\to+\infty}p(n, M) = 1.\]
\end{theorem}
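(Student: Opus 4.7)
The plan is to combine the two preceding lemmas with \cref{thm:maximum all weights} to show that for the overwhelming majority of $\bm w \in \{1, \ldots, M\}^n$ the least common multiple $d_{\bm w}$ is so much larger than the individual weights that a bubble cannot exist.

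First, I would dispose of the small cases $n \leq 2$: by \cref{thm:bound n-1}, every \bubble{$\bm w$}~$\bm u$ satisfies $\bm w \cdot \bm u < n\, d_{\bm w}$, which contradicts the defining inequality $\bm w \cdot \bm u \geq 2 d_{\bm w}$, so no bubbles exist at all and $p(n,M) = 1$ for every~$M$. For $n \geq 3$, I fix a small $\varepsilon$, for instance $\varepsilon := 1/7$ (any $0 < \varepsilon < 1/6$ suffices), and call a vector $\bm w \in \{1, \ldots, M\}^n$ \emph{good} if $w_i > M^{1 - \varepsilon}$ for every $i \in \{1, \ldots, n\}$ and $\gcd(w_i, w_j) < M^{\varepsilon}$ for all $i \neq j$. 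By \cref{lem:upperboundforgcds} and \cref{lem:lowerboundforweights}, the number of non-good vectors is at most $n M^{n - \varepsilon} + M^{n - \varepsilon/2} = o(M^n)$, so the goal reduces to showing that good vectors admit no bubbles.

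Next, I would bound $d_{\bm w}$ from below for good $\bm w$ by working prime-by-prime. For any three positive integers $a, b, c$, comparing $p$-adic valuations yields the elementary inequality
\[
\operatorname{lcm}(a, b, c) \;\geq\; \frac{abc}{\gcd(a,b)\,\gcd(a,c)\,\gcd(b,c)},
\]
since if $v_p(a) \leq v_p(b) \leq v_p(c)$ then the right-hand side has $p$-adic valuation $v_p(c) - v_p(a) \leq v_p(c) = v_p(\operatorname{lcm}(a,b,c))$. Applying this to $w_1, w_2, w_3$ of a good $\bm w$ gives $d_{\bm w} \geq \operatorname{lcm}(w_1, w_2, w_3) > M^{3 - 6\varepsilon}$, which for the chosen $\varepsilon$ exceeds $n M^2 / 2$ once $M$ is sufficiently large.

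To conclude, I would invoke \cref{thm:maximum all weights}: any \bubble{$\bm w$}~$\bm u$ satisfies
\[
\bm w \cdot \bm u \;\leq\; (\max(\bm w) - 2)\, \textstyle\sum_i w_i \;\leq\; n M^2 \;<\; 2 d_{\bm w},
\]
contradicting the defining inequality $\bm w \cdot \bm u \geq 2 d_{\bm w}$. Hence good vectors have no bubbles, and combining this with the density estimate on good vectors gives $\lim_{M \to \infty} p(n, M) = 1$. The main obstacle is calibrating $\varepsilon$ so that it simultaneously controls the two exceptional sets coming from \cref{lem:upperboundforgcds,lem:lowerboundforweights} and delivers an LCM lower bound that dominates the $O(M^2)$ upper bound for $\bm w \cdot \bm u$; the three-term LCM inequality itself is routine but must be made explicit.
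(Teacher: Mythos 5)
Your proof is correct and follows essentially the same approach as the paper: dispose of $n\leq 2$ directly, pass to the set of "good" vectors via \cref{lem:upperboundforgcds,lem:lowerboundforweights}, lower-bound $d_{\bm w}$ through the three-term LCM inequality, and conclude with \cref{thm:maximum all weights}. The only cosmetic differences are that you cite \cref{thm:bound n-1} rather than \cref{thm:multiple chains} for the small cases and compare $M^{3-6\varepsilon}$ directly against $nM^2$ rather than routing through $M^{2+\varepsilon}$, which is why your threshold $\varepsilon<1/6$ is slightly weaker than the paper's $\varepsilon<1/7$.
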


\begin{proof}
    The cases $n\in \{1,2\}$ follow from \cref{thm:multiple chains}.
    Now, let $n \geq 3$, let $\varepsilon < 1/6$ be a positive real number and let $\psi(x) := x^\varepsilon$.
    For all positive integers~$M$, define
    \[C(n, \varepsilon, M) := \left\{\bm w\in\{1,\ldots, M\}^n \;\middle|\; \begin{gathered}
      \forall\,1 \leq i < j \leq n\colon\\
      \gcd(w_i, w_j) < n^2 \psi(M) \log M,\\
      w_i, w_j > \frac{M}{2n\psi(M)}
    \end{gathered}\right\}.\]
    Let $\bm w\in C(n, \varepsilon, M)$. We compute
    \begin{align*}
        d_{\bm w} &= \operatorname{lcm}(w_1, \dots, w_n)\\
                  &\geq \operatorname{lcm}(w_1, w_2, w_3)\\
                  &=\frac{w_1w_2w_3\gcd(w_1,w_2,w_3)}{\gcd(w_1,w_2)\gcd(w_1,w_3)\gcd(w_2,w_3)}\\
                  &> \frac{M^3}{(2 n^3 \psi(M)^2 \log M)^3}
    \end{align*}
    Since $\psi(M) \leq M^\varepsilon$, there exists a positive integer $M_0$ depending only on $n$ such that for all $M \geq M_0$, we have
    \begin{equation} \label{eqn:d}
    d_{\bm w} > nM^{2+1/6-\varepsilon}.
    \end{equation}
    We have the inequality
    \begin{equation} \label{eqn:sum}
    (\max(\bm w)-2)\sum_{1\leq i\leq n}w_i < nM^2.
    \end{equation}
    By the inequalities \eqref{eqn:d} and \eqref{eqn:sum} and \cref{thm:maximum all weights}, for all $M \geq M_0$, there are no \bubbles{\(\bm w\)}.
    By \cref{lem:upperboundforgcds,lem:lowerboundforweights} and since $\psi(M) \geq M^\varepsilon$, for all positive real numbers $\varepsilon < 1/6$ and large enough integers~$M$, the inequalities
    \begin{equation}
    1 - M^{-\varepsilon} \leq \frac{|C(n, \varepsilon, M)|}{M^n}\leq p(n, M) \leq 1
    \end{equation}
    hold.
    Thus, the limit $\lim_{M\to+\infty}p(n, M)$ exists and is equal to~1.
\end{proof}

\section{Infinitude of the set of weights with a bubble} \label[sec]{sec:infinitude}

In this section, we construct infinitely many examples of weight vectors which admit a bubble. These examples are at the heart of the proof of \cref{thm:infinitely many bubbles graded,thm:infinitely many bubbles rees}.

\begin{lemma}\label{lem:unique alpha}
    Let $n\geq 2$ and let $\bm p:=(p_1, \ldots, p_n)$ be pairwise coprime positive integers greater than~1. Then, there exists a unique $\bm a \in \mathbb{Z}_{\geq0}^n$ with $a_i < p_i$ for all $i \in \{1, \ldots, n\}$ such that
    \[\sum_{i=1}^n\frac{a_i}{p_i}+\prod_{i=1}^n\frac{1}{p_i}\]
    is an integer, which we denote by~$k_{\bm p}$. Moreover, for every $i \in \{1, \ldots, n\}$, $a_i$ is coprime to $p_i$ and the following holds:
    \[
    \left\lceil\sum_{j\in\{1,\ldots, n\}\setminus\{i\}}\frac{a_j}{p_j}\right\rceil = k_{\bm p}.
    \]
\end{lemma}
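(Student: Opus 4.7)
The plan is to reduce the integrality condition to a system of congruences, apply the Chinese Remainder Theorem to get existence and uniqueness of $\bm u$, and then compute the value of the truncated sum to identify the ceiling. Write $P := \prod_{j=1}^n p_j$. Multiplying through by $P$, the condition that
\[
\sum_{i=1}^n \frac{u_i}{p_i} - \prod_{i=1}^n \frac{1}{p_i}
\]
is an integer is equivalent to
\[
\sum_{i=1}^n u_i \frac{P}{p_i} \equiv 1 \pmod{P}.
\]

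To exploit this, I will reduce modulo each $p_i$ and apply CRT. For $j \neq i$, $p_i$ divides $P/p_j$, so the displayed congruence is equivalent to the system
\[
u_i \cdot \frac{P}{p_i} \equiv 1 \pmod{p_i}, \qquad i = 1, \ldots, n.
\]
Since $p_i$ is prime and coprime to every $p_j$ with $j \neq i$, the integer $P/p_i$ is invertible modulo $p_i$. Hence for each $i$ there is a unique residue class for $u_i$ modulo $p_i$; this unique class contains exactly one representative in $\{0, 1, \ldots, p_i - 1\}$, and that representative is nonzero because the right-hand side $1$ is nonzero modulo $p_i$. Thus the unique $\bm u \in \mathbb Z_{\geq 1}^n$ satisfying the stated constraints and the integrality condition exists, and $k_{\bm p}$ is well-defined.

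For the ceiling identity, the key observation is that from the definition of $k_{\bm p}$,
\[
\sum_{j=1}^n \frac{u_j}{p_j} = k_{\bm p} + \frac{1}{P},
\]
so for every $i \in \{1, \ldots, n\}$,
\[
\sum_{j \in \{1, \ldots, n\} \setminus \{i\}} \frac{u_j}{p_j} = k_{\bm p} + \frac{1}{P} - \frac{u_i}{p_i}.
\]
I will now bracket this quantity strictly between $k_{\bm p} - 1$ and $k_{\bm p}$. Since $n \geq 2$, the product $\prod_{j \neq i} p_j$ is at least $2$, so $1/P < 1/p_i \leq u_i/p_i$, giving the strict upper bound by $k_{\bm p}$. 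For the lower bound, $u_i \leq p_i - 1$ yields $u_i/p_i \leq 1 - 1/p_i$, so the quantity is at least $k_{\bm p} + 1/P + 1/p_i - 1 > k_{\bm p} - 1$.

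No substantial obstacle is expected; the lemma reduces to CRT plus a two-line estimate on a sum of rationals. The only care needed is to keep track of the strict inequalities ensuring the ceiling does not collapse to $k_{\bm p} - 1$, which uses the hypotheses $n \geq 2$ and $u_i \in \{1, \ldots, p_i - 1\}$.
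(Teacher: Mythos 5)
Your proof is correct and takes essentially the same route as the paper: clear denominators to reduce the integrality condition to $\sum_i u_i (P/p_i) \equiv 1 \pmod P$, split into the congruences $u_i (P/p_i) \equiv 1 \pmod{p_i}$ via CRT to pin down each $u_i$ uniquely in $\{1,\ldots,p_i-1\}$, and then bracket $\sum_{j\neq i} u_j/p_j$ strictly between $k_{\bm p}-1$ and $k_{\bm p}$ using $1/P < u_i/p_i \le 1 - 1/p_i$. No gaps.
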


\begin{proof}
    Denote $d:= d_{\bm p} = \prod_{i=1}^np_i$ and for all $i \in \{1, \ldots, n\}$, denote $w_i := \prod_{j\in\{1,\ldots, n\}\setminus\{i\}}p_j$.

    We prove the existence and uniqueness of~$\bm a$.
    The rational number
    \[k := \sum_{i=1}^n\frac{a_i}{p_i}+\prod_{i=1}^n\frac{1}{p_i} \]
    is an integer if and only if the integer $1 + \sum_{i=1}^nw_ia_i$ is divisible by~$d$.
    By the Chinese Remainder Theorem, equivalently, for all $i \in \{1, \ldots, n\}$, the congruence
    \begin{equation} \label{eqn:u is coprime}
    a_i \equiv -w_i^{-1} \mod{p_i}
    \end{equation}
    holds. Since we require each coordinate $a_i$ to satisfy $0 \leq a_i \leq p_i - 1$, this proves the existence and uniqueness of $a_i$.

    From \eqref{eqn:u is coprime}, we find that $\gcd(a_i, p_i)=1$.

    Finally, let $i \in \{1, \ldots, n\}$. Since $n\geq 2$ and $0 < a_i < p_i$ hold, the inequalities
    \[0<\frac{a_i}{p_i} + \prod_{j=1}^n\frac{1}{p_j} < 1\]
    hold too. Therefore, the inequalities
    \[k_{\bm p}-1 < \sum_{j\in\{1,\ldots,n\}\setminus\{i\}}\frac{a_j}{p_j} < k_{\bm p}\]
    also hold, giving us the desired result.
\end{proof}

\begin{lemma} \label{thm:small modification}
    Let $n\geq 3$ be an integer and let $p_2, \ldots, p_n$ be pairwise coprime positive integers greater than 1. Let $a_2, \ldots, a_n$ be any integers with $0 <  a_i < p_i$ and $\gcd(a_i, p_i)=1$ for all $i\in\{2, \ldots, n\}$.
    Then, there exists a unique integer $k$ in $\{1,\ldots n-1\}$ such that the following inequalities hold:
    \[k-1 < \sum_{i=2}^{n}\frac{a_i}{p_i} < k.\]
    Moreover, there exists a unique integer $p_1' \in \{2, 3, \ldots, 1 + \prod_{j=2}^n p_j\}$ that is coprime to $\prod_{j=2}^n p_j$ such that for every positive integer $p_1 \geq 2$, the equality
    \[
         \sum_{i = 1}^{n} \frac{a_i}{p_i} + \prod_{i = 1}^n \frac{1}{p_i} = k
    \]
    holds for some $a_1\in\{1, \ldots, p_1-1\}$ if and only if $p_1 \equiv p_1' \mod{\prod_{j=2}^n p_j}$.
\end{lemma}

\begin{proof}
    The uniqueness of $k$ is clear. To prove the existence of~$k$, note that if $\sum_{i=2}^{n}\frac{a_i}{p_i}$ were an integer, then $p_i$ would divide~$a_i$, a contradiction since $p_i \geq 2$.

    For all integers $p_1 \geq 2$ and $a_i \in \{1, \ldots, p_1 - 1\}$, the rational number $\sum_{i=1}^n \frac{a_i}{p_i} + \prod_{i=1}^n \frac{1}{p_i}$ is an integer if and only if $a_1$ is the inverse of $q := -\prod_{i=2}^n p_i$ modulo $p_1$ and $p_1$ is a solution of the following system of congruence equations in the variable $x\in\mathbb{Z}$:
    \[
    \mleft\{\begin{aligned}
      x & \equiv (a_2 q/p_2)^{-1}\mod p_2,\\
      x & \equiv (a_3 q/p_3)^{-1}\mod p_3,\\
      & ~\, \vdots\\
      x & \equiv (a_n q/p_n)^{-1}\mod p_n.
    \end{aligned}\mright.
    \]
    Note that if $\sum_{i=1}^n \frac{a_i}{p_i} + \prod_{i=1}^n \frac{1}{p_i}$ is an integer, then it is necessarily equal to~$k$.
    The \lcnamecref{thm:small modification} now follows from the Chinese remainder theorem.
\end{proof}

\begin{corollary} \label{thm:infinitely many for each k}
    Let $n\geq 2$ be an integer and let $k\in\{1, \ldots, n-1\}$. Then, there are infinitely many tuples $\bm p:=(p_1, \ldots, p_n)$ of pairwise distinct prime numbers such that $k_{\bm p}$ (defined in \cref{lem:unique alpha}) is equal to $k$.
\end{corollary}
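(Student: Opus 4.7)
The plan is to split on whether $n = 2$ or $n \geq 3$, handling the former by a direct computation and the latter by reducing to \cref{thm:small modification}.

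For $n = 2$ the only admissible value is $k = 1$, so I would prove the stronger statement that $k_{(p_1, p_2)} = 1$ for every pair $(p_1, p_2)$ of distinct primes, which by itself yields infinitely many examples. By the Chinese remainder theorem there is a unique pair $(u_1, u_2) \in \{1, \ldots, p_1 - 1\} \times \{1, \ldots, p_2 - 1\}$ with $u_1 p_2 + u_2 p_1 \equiv 1 \pmod{p_1 p_2}$, and the crude bounds
\[
1 < p_1 + p_2 \leq u_1 p_2 + u_2 p_1 \leq 2 p_1 p_2 - p_1 - p_2 < 2 p_1 p_2
\]
force $u_1 p_2 + u_2 p_1 = p_1 p_2 + 1$. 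Rearranging then gives $u_1/p_1 + u_2/p_2 - 1/(p_1 p_2) = 1$, and the uniqueness statement in \cref{lem:unique alpha} identifies this with $k_{(p_1, p_2)}$.

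For $n \geq 3$ and $k \in \{1, \ldots, n - 1\}$ fixed, by \cref{thm:small modification} it suffices to exhibit a single tuple of pairwise distinct primes $p_2, \ldots, p_n$ and integers $u_i \in \{1, \ldots, p_i - 1\}$ satisfying $k - 1 < \sum_{i=2}^n u_i/p_i < k$, because that lemma then supplies infinitely many primes $p_1$ extending the data to $\bm p = (p_1, \ldots, p_n)$ with $k_{\bm p} = k$. I would produce such a tuple as follows: choose pairwise distinct primes $p_2, \ldots, p_{k+1}$ large enough that $0 < \sum_{i=2}^{k+1} 1/p_i < 1/2$, and set $u_i := p_i - 1$ at each of these indices; then, if $k < n - 1$, choose further pairwise distinct primes $p_{k+2}, \ldots, p_n$ (also distinct from the first batch) so large that $\sum_{i=k+2}^n 1/p_i$ is strictly smaller than $\sum_{i=2}^{k+1} 1/p_i$, and set $u_i := 1$ at those indices. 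A direct calculation gives
\[
\sum_{i=2}^n \frac{u_i}{p_i} = k - \sum_{i=2}^{k+1} \frac{1}{p_i} + \sum_{i=k+2}^n \frac{1}{p_i},
\]
which by construction lies strictly between $k - 1$ and $k$.

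The argument involves no serious mathematical obstacle: the delicate content, combining the Chinese remainder theorem with Dirichlet's theorem on primes in arithmetic progressions and checking that the resulting integer equals $k_{\bm p}$, is already packaged in \cref{thm:small modification}. What remains is the explicit choice of primes realising each prescribed $k$, which is essentially bookkeeping against the two numerical inequalities above.
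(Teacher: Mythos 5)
Your proposal is correct, and it follows the same overall route as the paper: exhibit primes $p_2, \ldots, p_n$ and residues $u_i \in \{1, \ldots, p_i - 1\}$ with $\sum_{i=2}^n u_i/p_i \in (k-1, k)$, then invoke \cref{thm:small modification} to produce infinitely many $p_1$ completing the tuple. Two small remarks on how your write-up compares. You treat $n = 2$ separately, which is in fact necessary since \cref{thm:small modification} is stated only for $n \geq 3$ and the paper's one-line proof glosses over this; your CRT computation for $n = 2$ is correct, though an even shorter route is the inequality $0 < k_{\bm p} < n$ recorded at the start of the proof of \cref{lem:unique alpha}, which for $n = 2$ forces $k_{\bm p} = 1$ outright for every pair of distinct primes. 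For $n \geq 3$ your index bookkeeping — setting $u_i = p_i - 1$ on $\{2, \ldots, k+1\}$ so that the main contribution to $\sum_{i = 2}^n u_i / p_i$ is exactly $k$, then arranging for the negative correction $\sum_{i=2}^{k+1} 1/p_i$ to lie below $1/2$ and to strictly dominate the positive correction $\sum_{i=k+2}^n 1/p_i$ — makes both strict inequalities $k - 1 < \sum < k$ immediate with no delicate balancing. The paper's proof assigns $u_i = p_i - 1$ to a different block of indices, which leaves only $k-1$ of the large residues inside the sum and makes the resulting sign condition less transparent; your bookkeeping is the cleaner of the two.
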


\begin{proof}
    Fix pairwise coprime positive integers $p_2, p_3, \ldots, p_{k+1}$ greater than~$k$.
    For all sufficiently large positive integers $p_{k+2}, p_{k+3}, \ldots, p_n$ such that $p_2,\ldots, p_n$ are pairwise coprime, defining $a_i = p_i - 1$ for $i \in \{2, 3, \ldots, k+1\}$ and $a_i = 1$ for $i \in \{k+2, k+3, \ldots, n\}$, we find
    \[
       k-1 < \sum_{i \in \{2, \ldots, n\}} \frac{a_i}{p_i} < k.
    \]
    Choosing $p_2, \ldots, p_n$ to be prime numbers, the \lcnamecref{thm:infinitely many for each k} now follows from \cref{thm:small modification} and Dirichlet's theorem on arithmetic progressions.
\end{proof}

For our purposes, the cases with $k_{\bm p} \in \{1, n-1\}$ are the most interesting.
These cases are the rarest in the following sense:
\begin{proposition} \label{prop:eulerian numbers}
Let $n$ and $k$ be integers satisfying $1 \leq k \leq n-1$.
Let $T$ denote one of the following:
\begin{enumerate}
\item the set of vectors $\bm p = (p_1, \ldots, p_n) \in \mathbb Z_{\geq2}^n$ of pairwise coprime integers,
\item the set of vectors $\bm p = (p_1, \ldots, p_n) \in \mathbb Z_{\geq2}^n$ of pairwise coprime integers such that $p_2, \ldots, p_n$ are prime powers,
\item the set of vectors $\bm p = (p_1, \ldots, p_n) \in \mathbb Z_{\geq2}^n$ of pairwise coprime integers such that $p_2, \ldots, p_n$ are prime numbers.
\end{enumerate}
For every integer $M$ and $l \in \{1, \ldots, n-1\}$, let $T_l(M)$ denote the set of all vectors $\bm p \in T$ such that $k_{\bm p} = l$, $p_1 \leq \operatorname{lcm}(p_2, \ldots, p_n) + 1$ and $p_i \leq M$ for all $i \in \{2, \ldots, n\}$.
Then,
\[
\lim_{M \to \infty} \frac{\abs{T_k(M)}}{\sum_{l=1}^{n-1}\abs{T_l(M)}} = A(n-1, k-1),
\]
where $A(n-1, k-1)$ denotes the \emph{Eulerian number}, meaning the number of permutations of $1, 2, \ldots, n-1$ with $k-1$ ascents.
\end{proposition}

\begin{proof}
For every $\bm q := (q_2, \ldots, q_n) \in \mathbb Z_{\geq1}^{n-1}$ and $l \in \{0, 1, \ldots, n-1\}$, let $U_l(\bm q)$ denote the set of all vectors $(a_2, \ldots, a_n) \in \mathbb Z_{\geq0}^{n-1}$ such that $\lfloor\frac{a_2}{q_2} + \ldots + \frac{a_n}{q_n}\rfloor = l$ and $a_i < q_i$ for all~$i$.
By \cref{thm:small modification}, instead of considering vectors in $T_l(M)$, we can consider pairs of vectors $(\bm q, \bm a) \in \mathbb Z_{\geq1}^{n-1} \times \mathbb Z_{\geq0}^{n-1}$ where $\bm a \in U_l(\bm q)$.
The number of vectors $\bm a \in U_l(\bm q)$ such that $\gcd(q_i, a_i) = 1$ for all $i$ is the sum of $(-1)^{\omega(\prod_i g_i)} \abs{U_l(\frac{q_2}{g_2}, \ldots, \frac{q_n}{g_n})}$ over the vectors $\bm g = (g_2, \ldots, g_n) \in \mathbb Z_{\geq1}^{n-1}$ such that for all~$j$, $g_j$ is square-free and divides~$q_j$, where $\omega$ denotes the prime omega function.
Summing only over the vectors $\bm g$ that satisfy $\frac{q_i}{g_i} \geq \sqrt{M}$ for all $i$ does not change the desired limit. Therefore, it suffices to prove that for all $\bm q$ with $q_i \geq \sqrt{M}$ for all~$i$, we have
\[
\abs{\frac{\abs{U_k(\bm q)}}{\sum_{l=0}^{n-1} \abs{U_l(\bm q)}} - A(n-1, k-1)} < \frac{n}{\sqrt{M}}.
\]
Let $\mathcal R_k$ denote the $k$th slice of the unit $(n-1)$-cube, that is,
\[
\mathcal R_k := \{(x_2, \ldots, x_n) \in [0, 1]^{n-1} \mid k \leq x_2 + \ldots + x_n \leq k+1\}.
\]
By \cite[Proposition~7.1]{Pet15}, we have $A(n-1, k-1) = \operatorname{vol}(\mathcal R_{k-1})$.
The \lcnamecref{prop:eulerian numbers} follows by approximating $\frac{\abs{U_k(\bm q)}}{\sum_{l=0}^{n-1} \abs{U_l(\bm q)}}$ by the volume of the convex hull of the points $(\frac{a_2}{q_2}, \ldots, \frac{a_n}{q_n})$ with $\bm a \in U_k(\bm q)$.
Details omitted.
\end{proof}

We can inductively construct examples of $\bm p$ with $k_{\bm p} = k$ for any $k \in \{1, \ldots, n-1\}$.

\begin{lemma} \label{lem:inductive}
Let $n \geq 2$ be an integer and let $\bm p \in \mathbb Z_{\geq2}^n$ be a vector of pairwise coprime integers. Let $\bm a \in \mathbb Z_{\geq0}^n$ be the unique vector with $a_i < p_i$ for all $i \in \{1, \ldots, n\}$ such that
\[
\sum_{i=1}^n \frac{a_i}{p_i} + \prod_{i=1}^n \frac{1}{p_i} = k_{\bm p}.
\]
Define $a_{n+1} := 1$. Then, both of the following hold:
\begin{enumerate}[label=\textup{(\alph*)}, ref=\alph*]
\item if $p_{n+1} = 1 + \prod_{i=1}^n p_i$, then
\[
\sum_{i=1}^{n+1} \frac{a_i}{p_i} + \prod_{i=1}^{n+1} \frac{1}{p_i} = k_{\bm p},
\]
\item if $p_{n+1} = -1 + \prod_{i=1}^n p_i$, then
\[
\sum_{i=1}^{n+1} \frac{p_i - a_i}{p_i} + \prod_{i=1}^{n+1} \frac{1}{p_i} = n + 1 - k_{\bm p}.
\]
\end{enumerate}
\end{lemma}

\begin{proof}
Direct computation.
\end{proof}

\begin{remark}\label{rem:after lemma inductive}
Let $2 \leq n < m$ be integers, let $\bm p, \bm q \in \mathbb Z_{\geq2}^n$ satisfy $k_{\bm p} = 1$ and $k_{\bm q} = n-1$ and let $k \in \{1, m-1\}$.
By repeatedly applying \cref{lem:inductive}, we can construct a vector $\bm r \in \mathbb Z_{\geq2}^m$ of pairwise coprime integers such that $k_{\bm r} = k$.
\end{remark}

Finally, we show that the vectors $\bm w_{\bm p}$ and $\bm v_{\bm q}$ (defined respectively in \cref{exa:sharp graded,exa:sharp Rees}) associated to respectively $\bm p$ and $\bm q$ with $k_{\bm p} = 1$ and $k_{\bm q} = n-1$ admit high weight bubbles.

\begin{theorem} \label{thm:infinitely many bubbles graded}
    Let $n \geq 4$ be an integer.
    Let $\bm p := (p_2, \ldots, p_n) \in \mathbb Z_{\geq2}^{n-1}$ be a vector of pairwise coprime integers with $k_{\bm p} = n-2$.
    Define $\bm w := \bm w_{\bm p}$ as in \cref{exa:sharp graded}.
    Then, all of the following hold:
        \begin{enumerate}[label=\textup{(\alph*)}, ref=\alph*]
            \item $\bm w$ is well-formed,
            \item there exists a \bubble{$\bm w$} $\bm u$ with $\bm w \cdot \bm u = (n-2)d_{\bm w}$ such that $(\bm w, \bm u, n-3)$ satisfies \cref{cond:combinatorial very ampleness}, and
            \item every \bubble{$\bm w$} $\bm u'$ satisfies $\bm w \cdot \bm u < (n-1)d_{\bm w}$.
        \end{enumerate}
\end{theorem}

\begin{proof}
    (a) Follows from the fact that $p_2,\ldots,p_n$ are pairwise coprime.

    (c) Follows from \cref{thm:multiple chains}.

    (b) First, we construct $\bm u \in \mathbb Z_{\geq1}^n$ such that $\bm w \cdot \bm u = (n-2)d_{\bm w}$. By \cref{lem:unique alpha}, there exists a unique vector $\bm{a}  := (a_2, \ldots, a_{n})\in\mathbb{Z}_{\geq 1}^{n-1}$ such that $a_i < p_i$ for all $i\in\{2,\ldots,n\}$ and the equality
    \[\sum_{i=2}^{n}\frac{a_i}{p_i} + \prod_{i=2}^{n}\frac{1}{p_i} = n-2\]
    holds. We define
    \[
    u_1 := 1\quad \text{and}\quad \forall i\in\{2,\ldots, n\}\colon \quad u_i := a_i.
    \]
    By construction, it follows that
    \[
        \bm w\cdot\bm u = (n-2)d_{\bm w}.
    \]

    Next we prove that $\bm u$ is a \bubble{$\bm w$}.
    Seeking a contradiction, we assume that there exists $\bm v\in\mathbb{Z}_{\geq 0}^{n}$ such that $\bm w \cdot \bm v = d_{\bm w}$ and $\bm v\preceq \bm u$. We consider three cases depending on the value of $v_1$ and derive a contradiction in each case.
    \begin{enumerate}[label=\textup{(\arabic*)}, ref=\arabic*]
        \item If $v_{1} = 0$, then for all $i\in\{2, \ldots, n\}$, $p_{i}$ divides $v_{i}$ and there exists $j \in \{2, \ldots, n\}$ such that $v_j > 0$. Hence $v_{j}\geq p_{j}$, contradicting that $\bm v\preceq \bm u$.
        \item If $v_{1} = 1$, we define $\bm v' := \pi_{1}(\bm v)\in \mathbb{Z}_{\geq 0}^{n-1}$ and observe that
        \[\pi_{1}(\bm w)\cdot \bm v' = d_{\bm w}-1.\]
        This contradicts the uniqueness of $k_{\bm p}$ from \cref{lem:unique alpha} and the assumption that $k_{\bm p} = n-2 > 1$.
        \item If $v_{1} > 1$, then $\bm v\not\preceq \bm u$, a contradiction.
    \end{enumerate}

    Finally, we prove that $(\bm w, \bm u, n-3)$ satisfies \cref{cond:combinatorial very ampleness}. Fix any $i\in\{2,\ldots, n\}$.
    We proceed by proof by contradiction. Assume that there exists $m\geq 2$ such that there exist $\bm v^1, \ldots, \bm v^m\in V^+_{\bm w}((n-3)d_{\bm w})$ satisfying the following equation:
    \begin{equation} \label{eqn:left-hand side well-formed}
    \bm u' := \bm u + \bigl((n-3)m-(n-2)\bigr)p_i \bm e_{i} = \sum_{j=1}^m\bm v^j.
    \end{equation}
    Note that since $\bm w\cdot \bm u' = (n-3)md_{\bm w}$ we have that $\bm v^j\in V_{\bm w}((n-3)d_{\bm w})$ for all $j\in\{1, \ldots, m\}$.
    Now, since $u'_{1} = u_{1} = 1$, there exists a unique $j_0\in \{1, \ldots, m\}$ such that $v_{1}^{j_0} = 1$ and $v_{1}^j = 0$ for all $j\in \{1, \ldots, m\}\setminus \{j_0\}$. Without loss of generality, we assume $j_0 = m$. Then, for all $l\in \{2,\ldots, n\}\setminus\{i\}$ and for all $j\in \{1, \ldots, m-1\}$, we have $v^j_{l}\leq u_{l} < p_l$ and $v^j_{l}$ is divisible by~$p_l$. This implies the following:
    \begin{equation} \label{eqn:right-hand side well-formed}
    \bm v^1 = \ldots = \bm v^{m-1} = (n-3)p_i\bm e_{i}.
    \end{equation}
    Using \eqref{eqn:left-hand side well-formed} and \eqref{eqn:right-hand side well-formed} together with $u_{i}' = \sum_{j=1}^m v_{i}^j$, we find
    \[u_{i} + \bigl((n-3)m-(n-2)\bigr)p_i = (m-1)(n-3)p_{i} + v^m_{i}.\]
    In particular, we obtain that
    \[p_i + v^m_{i} = u_{i} < p_i.\]
    This is absurd since $v_{i}^m$ must be nonnegative.
\end{proof}

\begin{theorem}\label{thm:infinitely many bubbles rees}
    Let $n \geq 3$ be an integer.
    Let $\bm p := (p_1, \ldots, p_n) \in \mathbb Z_{\geq2}^n$ be a vector of pairwise coprime integers with $k_{\bm p} = 1$.
    Define $\bm w := \bm v_{\bm p}$ as in \cref{exa:sharp Rees}.
    Then, both of the following hold:
    \begin{enumerate}[label=\textup{(\alph*)}, ref=\alph*]
        \item there exists a \bubble{$\bm w$} $\bm u$ with $\bm w \cdot \bm u = (n-1)d_{\bm w} + 1$ such that $(\bm w, \bm u, n-2)$ satisfies \cref{cond:combinatorial very ampleness}, and
        \item every \bubble{$\bm w$} $\bm u'$ satisfies $\bm w \cdot \bm u' < nd_{\bm w}$.
    \end{enumerate}
\end{theorem}

\begin{proof}
    (b) Follows from \cref{thm:multiple chains}.

    (a) First, we construct a \bubble{$\bm w$} $\bm u \in \mathbb Z_{\geq1}^n$ such that $\bm w \cdot \bm u = (n-1)d_{\bm w} + 1$.
    By \cref{lem:unique alpha}, there exists a unique vector $\bm a \in \mathbb Z_{\geq1}^n$ such that $a_i < p_i$ for all $i \in \{1, \ldots, n\}$ and $k_{\bm p} = 1$. This implies
    \[\bm w\cdot\bm a = d_{\bm w} - 1.\]
    If one defines, for all $i\in \{1,\ldots, n\}$,
    \[u_i := p_i-a_i,\]
    then we obtain
    \[\bm w\cdot \bm u = (n-1)d_{\bm w} + 1.\]
    If $\bm v \in \mathbb Z_{\geq0}^n$ is such that $\bm w \cdot \bm v = d_{\bm w}$, then there exists $i \in \{1, \ldots, n\}$ such that $p_i$ divides $v_i$ and $v_i > 0$, showing that $\bm v \npreceq \bm u$. This shows that $\bm u$ is a \bubble{$\bm w$}.

    Now, we prove that $(\bm w, \bm u, n-2)$ satisfies \cref{cond:combinatorial very ampleness}. Fix any $i\in\{1,\ldots, n\}$. We proceed once again by proof by contradiction. Assume that there exists $m\geq 2$ such that there exist $\bm v^1, \ldots, \bm v^m\in V^+_{\bm w}((n-2)d_{\bm w})$ satisfying the following equation:
    \begin{equation} \label{eqn:left-hand side arbitrary}
    \bm u' := \bm u + \bigl((n-2)m-(n-1)\bigr)p_i \bm e_{i} = \sum_{j=1}^m\bm v^j.
    \end{equation}
    Note that since $\bm w\cdot \bm u' = (n-2)md_{\bm w} + 1$, there exists $j_0\in \{1, \ldots, m\}$ such that $\bm w\cdot \bm v^{j_0} = (n-2)d_{\bm w}+1$ and $\bm v^j\in V_{\bm w}((n-2)d_{\bm w})$ for all $j\in\{1, \ldots, m\}\setminus\{j_0\}$. Without loss of generality, we may assume that $j_0 = m$.
    For all $l\in \{1,\ldots, n\}\setminus\{i\}$ and for all $j\in \{1, \ldots, m-1\}$, we have $v^j_{l}\leq u_{l} < p_l$ and $v^j_{l}$ is divisible by~$p_l$. This implies the following:
    \begin{equation} \label{eqn:right-hand side arbitrary}
    \bm v^1 = \ldots = \bm v^{m-1} = (n-2)p_i\bm e_{i}.
    \end{equation}
    Using \eqref{eqn:left-hand side arbitrary} and \eqref{eqn:right-hand side arbitrary} together with $u_{i}' = \sum_{j=1}^m v_{i}^j$, we find
    \[u_i + \bigl((n-2)m-(n-1)\bigr)p_i = (m-1)(n-2)p_i + v^m_i.\]
    In particular, we obtain that
    \[p_i + v^m_i = u_i < p_i.\]
    This is absurd since $v_i^m$ must be nonnegative.
\end{proof}

\appendix

\section{Algorithms} \label[app]{app:algorithm}

\begin{algorithm}[H]
\caption{Recursive}\label{alg: Recursive algorithm}
\DontPrintSemicolon
\Input{
\begin{itemize}
\item A vector $\bm w\in\mathbb{Z}_{\geq1}^n$,
\item a vector $\variable{mins}\in\mathbb{Z}^n$,
\item a vector $\variable{maxs}\in\mathbb{Z}^n$,
\item an integer $\variable{min\_product}$,
\item an arranged marked list of vectors $\variable{thresholds}\subseteq \mathbb{Z}_{\geq0}^n$ and
\item a boolean $\variable{return\_one}\in\{\variable{true},\,\variable{false}\}$.
\end{itemize}
}
\Output{If $\variable{return\_one} = \variable{false}$, then returns the list of all vectors $\bm u\in \mathbb{Z}^n$ satisfying the inequalities $\variable{mins} \preceq u \preceq \variable{maxs}$ and $\variable{min\_product} \leq \bm w\cdot \bm u$ such that there are no vectors $\bm v\in \variable{thresholds}$ with $\bm v\preceq \bm u$. If $\variable{return\_one} = \variable{true}$, then returns exactly one such vector if it exists, otherwise returns the empty list.}
    $\variable{a}\leftarrow \max\mleft(\variable{mins}_n,\, \mleft\lceil\dfrac{\variable{min\_product} - \pi_n(\bm w) \cdot \pi_n(\variable{maxs})}{w_n}\mright\rceil\mright)$.\label{line: lower bound a}\\
    $\variable{b}\leftarrow \min(\variable{maxs}_n, v_n-1)$ where $\bm v$ is the first element of $\variable{thresholds}$.\label{line: upper bound b}\\
    \If{$\variable{a} > \variable{b}$}
        {
            \Return [].\label{line: return empty list}
        }
    \If{$n = 1$\label{line: terminating case}}
        {
            \If{$\variable{return\_one} = \variable{true}$}
                {
                    \Return $[\variable{a}\bm e_1]$.
                }
            \Else
                {
                    \Return $[\variable{a}\bm e_1,\, (\variable{a}+1)\bm e_1,\,
                    \ldots,\, \variable{b}\bm e_1]$.
                }
        }

    $L\leftarrow []$.

    \For{$u_n\in \{\variable{a}, \variable{a}+1,\ldots, \variable{b}\}$\label{line: safe interval}}
        {
            $\variable{low} \leftarrow \variable{min\_product} - u_n w_n$.\label{line: new min product}\\
            $\variable{thrs}\leftarrow \{\pi_n(\bm v) \mid \bm v\in \variable{thresholds},\, v_n \leq u_n\}$.\label{line: new thresholds}\\
            Let $\widetilde{\variable{thrs}}$ be an arrangement of $\variable{thrs}$.\label{line: arrangement thrs}\\
            $V'\leftarrow \textsc{Recursive}(\pi_n(\bm w),\, \pi_n(\variable{mins}),\, \pi_n(\variable{maxs}),\,  \variable{low},\, \widetilde{\variable{thrs}},\, \variable{return\_one})$\\
            \For{$\bm u'\in  V'$}
                {
                    \textbf{append} $\kappa_n(\bm u') + u_n\bm e_n$ to $L$.\label{line: recursive buildup}\\
                    \If{$\variable{return\_one} = \variable{true}$}
                        {
                            \Return $L$.
                        }
                }
        }
    \Return $L$.
\end{algorithm}

\begin{algorithm}[H]
\caption{PositiveBubbles}\label{alg: Bubbles algorithm}
\DontPrintSemicolon
\Input{A nondecreasing vector $\bm w \in\mathbb{Z}_{\geq1}^n$.}
\Output{The complete list of all \bubbles{$\bm w$}~$\bm u$ with positive coordinates.}
    \If{$n < 3$\label{line: step 1}}
        {
            \Return $[]$.
        }
    $\bm w \leftarrow \frac{1}{\gcd(w_1,\ldots, w_n)}\cdot\bm w$.\label{line: step 2}\\

    For every $i\in \{1, \ldots, n\}$, define
    $
    \alpha_i := \mleft\{\begin{aligned}
        & w_{n} - 2 && \text{if $i<n$},\\
        & w_{n-1} - 2 && \text{otherwise}.
    \end{aligned}\mright.
    $

    \If{$\bm w \cdot \bm{\alpha} < 2d_{\bm w}$\label{line: step 3}}
        {
            \Return $[]$.\label{line: terminate step 3}
        }
    For every $i\in \{1, \ldots, n\}$, define:
    \begin{align*}
        \varepsilon_i & := \min\mleft(\alpha_i,\, \frac{d_{\bm w}}{w_i}-1\mright),\\
        \zeta_i & := \min \mleft(\{\varepsilon_i + 1\} \cup \mleft\{ \frac{w_j}{w_i} \;  \middle|\; j \in \{i+1, \ldots, n\},\, w_i \mid w_j \mright\}\mright),\\
        \beta_i & := \min\mleft(\varepsilon_i,\, \zeta_i - 1\mright).
    \end{align*}

    \If{$\bm w\cdot \bm{\beta} < 2d_{\bm w}$\label{line: step 4}}
        {
            \Return $[]$.
        }
    For every $i\in \{1, \ldots, n\}$, use  \cref{not:bound on u when all entries are positive} to define:
            \[
                \begin{aligned}
                    \delta_i & := \mleft\{\begin{aligned}
                                 & \min\left(\varepsilon_i,~ \mleft\lfloor \frac{\max(\pi_i(\bm w)) + c_{\bm w, i}}{w_i}\mright\rfloor - 2\right) && \text{if \(\sum_{j \in \{1, \ldots, n\} \setminus \{i\}} w_j < d_{\bm w}+w_i\)},\\
                                & \varepsilon_i && \text{otherwise},
                        \end{aligned}\mright.\\
                    \gamma_i & := \min\left(\delta_i, \zeta_i\right).
                \end{aligned}
            \]

            \If{$\exists i \in \{1, \ldots, n\}\colon \gamma_i \leq 0$}
                {
                    \Return $[]$.
                }

            \If{$\bm w\cdot \bm{\gamma} < 2d_{\bm w}$\label{line: step 5}}
                {
                    \Return $[]$.
                }

    $\variable{W}_{\bm \gamma} \leftarrow \{(\gamma_i + 1)\bm e_i\mid i\in\{1,\ldots, n\}\}\cup \{\bm v\in V_{\bm w}(d_{\bm w}) \mid \bm v \preceq \bm \gamma\}$.\\
    Let $\widetilde{\variable{W}_{\bm \gamma}}$ be an arrangement of $\variable{W}_{\bm \gamma}$.\\
    $\variable{mins} \leftarrow \sum_{i=1}^n\bm e_i$.\\
    \If{$\textnormal{\textsc{Recursive}}(\bm w,\, \variable{mins},\, \bm \gamma,\, 2d_{\bm w},\, \widetilde{\variable{W}_{\bm \gamma}},\,  \variable{true}) = \emptyset$\label{line: step 6}}
        {
            \Return $[]$.
        }
    $\variable{W}_{\bm \delta} \leftarrow \{(\delta_i + 1)\bm e_i\mid i\in\{1,\ldots, n\}\}\cup \{\bm v\in V_{\bm w}(d_{\bm w}) \mid \bm v \preceq \bm \delta\}$.\\
    Let $\widetilde{\variable{W}_{\bm \delta}}$ be an arrangement of $\variable{W}_{\bm \delta}$.\\
    \Return $\textnormal{\textsc{Recursive}}(\bm w,\, \variable{mins},\, \bm \delta,\, 2d_{\bm w},\, \widetilde{\variable{W}_{\bm \delta}},\,\variable{false})$.\label{line: step 7}
\end{algorithm}

\section{Tables} \label[app]{app:tables}

\Cref{tab:positive coordinates} contains all the \bubbles{$\bm w$} with positive coordinates where $\bm w \in \mathbb Z_{\geq1}^n$ is such that $\max(\bm w) < \upperbound{}$ and $w_1 < \ldots < w_n$ (see \cref{thm:bubbles with positive coordinates} for the proof).
For every such~$\bm w$, the \bubbles{$\bm w$} $\bm u$ in \cref{tab:positive coordinates} are sorted by the value of $\bm w \cdot \bm u$ with the first bubble $\bm v$ having the least $\bm w \cdot \bm v$.
\colorlet{ColorExactlyTwoD}{black}%
\colorlet{ColorTwoDPlusOne}{black}%
\colorlet{ColorTwoDPlusTwo}{black}%
\colorlet{ColorTwoDPlusThree}{black}%
\newcommand\ColorExactlyTwoD[1]{\textcolor{ColorExactlyTwoD}{\textbf{\smallerspaces{#1}}}}%
\newcommand\ColorTwoDPlusOne[1]{\textcolor{ColorTwoDPlusOne}{\textit{\smallerspaces{#1}}}}%
\newcommand\ColorTwoDPlusTwo[1]{\textcolor{ColorTwoDPlusTwo}{\textit{\smallerspaces{#1}}}}%
\newcommand\ColorTwoDPlusThree[1]{\textcolor{ColorTwoDPlusThree}{\textit{\smallerspaces{#1}}}}%
For every \bubble{$\bm w$} $\bm u$ in \cref{tab:positive coordinates},
\begin{enumerate}[label=\textup{(\alph*)}, ref=\alph*]
\item if $d_{\bm w} \mid \bm w \cdot \bm u$, then $\bm u$ is written in \ColorExactlyTwoD{bold},
\item if $\bm w \cdot \bm u \geq 2 d_{\bm w} + 1$ and $\bm u \notin 2 V_{\bm w}^+(d_{\bm w})$, then $\bm u$ is written in \ColorTwoDPlusTwo{italic}.
\end{enumerate}
If $\bm u$ is written in bold or italic, then $\bm w \cdot \bm u \in \{2d_{\bm w},\, 2d_{\bm w}+1,\, 2d_{\bm w}+2,\, 2d_{\bm w}+3\}$.

\begin{longtable}{cc}
\caption{Bubbles with positive coordinates for weights less than \upperbound{}\label{tab:positive coordinates}}\\
\toprule
$\bm w$ & Bubbles\\*
\midrule
\endfirsthead
\toprule
$\bm w$ & Bubbles\\*
\midrule
\endhead
\bottomrule
\endlastfoot
$(6, 14, 21)$ & $\{\ColorTwoDPlusOne{(6, 2, 1)}\}$\\
$(6, 22, 33)$ & $\{(10, 2, 1)\}$\\
$(6, 26, 39)$ & $\{\ColorTwoDPlusOne{(11, 2, 1)},\, (12, 2, 1)\}$\\
$(10, 14, 35)$ & $\{\ColorTwoDPlusOne{(5, 4, 1)},\, (6, 4, 1)\}$\\
$(12, 15, 20)$ & $\{\ColorTwoDPlusOne{(3, 3, 2)},\, (4, 3, 2)\}$\\
$(12, 21, 28)$ & $\{(6, 2, 2),\, (5, 3, 2),\, (6, 3, 2)\}$\\
$(12, 26, 39)$ & $\{\ColorTwoDPlusOne{(12, 2, 3)},\, \ColorTwoDPlusOne{(12, 5, 1)}\}$\\
$(14, 20, 35)$ & $\{\ColorTwoDPlusOne{(4, 6, 3)},\, \ColorTwoDPlusOne{(9, 6, 1)}\}$\\
$(15, 20, 24)$ & $\{\ColorTwoDPlusOne{(3, 5, 4)},\, \ColorTwoDPlusOne{(7, 2, 4)}\}$\\
$(15, 21, 35)$ & $\{(4, 4, 2),\, (6, 3, 2),\, (5, 4, 2),\, (6, 4, 2)\}$\\
$(15, 24, 40)$ & $\mleft\{\begin{aligned}
  & \ColorTwoDPlusOne{(7, 4, 1)},\, \ColorTwoDPlusTwo{(6, 3, 2)},\, (5, 4, 2),\\[-1\baselineskip]
  & (7, 3, 2),\, (6, 4, 2),\, (7, 4, 2)
\end{aligned}\mright\}$\\
$(20, 28, 35)$ & $\mleft\{\begin{aligned}
  & \ColorTwoDPlusOne{(6, 2, 3)},\, \ColorTwoDPlusTwo{(5, 4, 2)},\, (5, 3, 3),\, (4, 4, 3),\\[-1\baselineskip]
  & (6, 4, 2),\, (6, 3, 3),\, (5, 4, 3),\, (6, 4, 3)
\end{aligned}\mright\}$\\
$(21, 24, 28)$ & $\{(3, 6, 5),\, (7, 6, 2)\}$\\
$(21, 30, 35)$ & $\{(4, 6, 5),\, (9, 6, 2)\}$\\
$(24, 30, 40)$ & $\{\ColorTwoDPlusTwo{(3, 3, 2)},\, (4, 3, 2)\}$\\
$(28, 35, 40)$ & $\mleft\{\begin{aligned}
  & (4, 6, 6),\, \ColorTwoDPlusTwo{(9, 2, 6)},\, (3, 7, 6),\\[-1\baselineskip]
  & (8, 3, 6),\, (4, 7, 6),\, (9, 3, 6)
\end{aligned}\mright\}$\\
\midrule
$(1, 6, 10, 15)$ & $\{\ColorExactlyTwoD{(1, 4, 2, 1)}\}$\\
$(1, 6, 22, 33)$ & $\{\ColorExactlyTwoD{(1, 9, 2, 1)},\, (1, 10, 2, 1)\}$\\
$(1, 12, 20, 30)$ & $\{\ColorExactlyTwoD{(2, 4, 2, 1)},\, \ColorTwoDPlusOne{(3, 4, 2, 1)}\}$\\
$(1, 12, 21, 28)$ & $\{\ColorExactlyTwoD{(1, 4, 3, 2)},\, (1, 6, 2, 2),\, (1, 5, 3, 2),\, (1, 6, 3, 2)\}$\\
$(1, 12, 22, 33)$ & $\{\ColorExactlyTwoD{(1, 10, 2, 3)},\, \ColorExactlyTwoD{(1, 10, 5, 1)}\}$\\
$(1, 15, 21, 35)$ & $\mleft\{\begin{gathered}
  \ColorExactlyTwoD{(1, 6, 4, 1)},\, \ColorExactlyTwoD{(2, 5, 3, 2)},\, \ColorTwoDPlusOne{(2, 6, 4, 1)},\, (1, 4, 4, 2),\\[-1\baselineskip]
  (2, 4, 4, 2),\, (1, 6, 3, 2),\, (2, 6, 3, 2),\, (1, 5, 4, 2),\\[-1\baselineskip]
  (2, 5, 4, 2),\, (1, 6, 4, 2),\, (2, 6, 4, 2)
\end{gathered}\mright\}$\\
$(1, 15, 24, 40)$ & $\{\ColorTwoDPlusTwo{(1, 7, 4, 1)}\}$\\
$(1, 20, 28, 35)$ & $\{\ColorTwoDPlusTwo{(1, 6, 2, 3)}\}$\\
$(1, 21, 28, 36)$ & $\{\ColorExactlyTwoD{(1, 3, 8, 6)},\, \ColorExactlyTwoD{(1, 7, 5, 6)},\, \ColorExactlyTwoD{(1, 11, 2, 6)}\}$\\
$(1, 21, 30, 35)$ & $\{\ColorExactlyTwoD{(2, 3, 6, 5)}\}$\\
$(1, 24, 30, 40)$ & $\{\ColorTwoDPlusThree{(1, 3, 3, 2)},\, (1, 4, 3, 2)\}$\\
$(2, 12, 15, 20)$ & $\{\ColorExactlyTwoD{(1, 4, 2, 2)},\, (1, 3, 3, 2),\, (1, 4, 3, 2)\}$\\
$(2, 12, 20, 30)$ & $\{\ColorExactlyTwoD{(1, 4, 2, 1)}\}$\\
$(2, 21, 30, 35)$ & $\{\ColorExactlyTwoD{(1, 3, 6, 5)},\, \ColorExactlyTwoD{(1, 8, 6, 2)},\, (1, 4, 6, 5),\, (1, 9, 6, 2)\}$\\
$(3, 12, 15, 20)$ & $\{\ColorTwoDPlusOne{(1, 4, 2, 2)}\}$\\
$(3, 12, 20, 30)$ & $\{\ColorTwoDPlusOne{(1, 4, 2, 1)}\}$\\
$(3, 12, 21, 28)$ & $\{(1, 4, 3, 2)\}$\\
$(3, 24, 30, 40)$ & $\{(2, 4, 2, 2),\, (1, 3, 3, 2),\, (3, 4, 2, 2),\, (1, 4, 3, 2)\}$\\
$(4, 15, 24, 40)$ & $\mleft\{\begin{aligned}
  & \ColorExactlyTwoD{(1, 4, 4, 2)},\, \ColorTwoDPlusOne{(2, 7, 2, 2)},\, (1, 7, 4, 1),\, (1, 6, 3, 2),\\[-1\baselineskip]
  & (1, 5, 4, 2),\, (1, 7, 3, 2),\, (1, 6, 4, 2),\, (1, 7, 4, 2)
\end{aligned}\mright\}$\\
$(4, 20, 28, 35)$ & $\{\ColorTwoDPlusOne{(1, 3, 4, 3)}\}$\\
$(4, 24, 30, 40)$ & $\{\ColorExactlyTwoD{(1, 4, 2, 2)},\, (1, 3, 3, 2),\, (1, 4, 3, 2)\}$\\
$(5, 12, 20, 30)$ & $\{(1, 4, 2, 1)\}$\\
$(5, 15, 21, 35)$ & $\{(1, 6, 4, 1)\}$\\
$(5, 15, 24, 40)$ & $\{\ColorTwoDPlusOne{(1, 4, 4, 2)}\}$\\
$(5, 20, 28, 35)$ & $\{\ColorTwoDPlusTwo{(1, 3, 4, 3)},\, (1, 4, 4, 3)\}$\\
$(5, 24, 30, 40)$ & $\{\ColorTwoDPlusOne{(1, 4, 2, 2)},\, \ColorTwoDPlusOne{(3, 4, 3, 1)},\, (1, 3, 3, 2),\, (1, 4, 3, 2)\}$\\
$(5, 28, 35, 40)$ & $\{(1, 4, 7, 5),\, (1, 9, 3, 5)\}$\\
$(6, 15, 24, 40)$ & $\{\ColorTwoDPlusTwo{(1, 4, 4, 2)},\, (1, 5, 4, 2)\}$\\
$(6, 21, 30, 35)$ & $\{(1, 3, 6, 5)\}$\\
$(6, 24, 30, 40)$ & $\{\ColorTwoDPlusTwo{(1, 4, 2, 2)}\}$\\
$(7, 12, 21, 28)$ & $\{(1, 6, 3, 1)\}$\\
$(7, 20, 28, 35)$ & $\{\ColorTwoDPlusOne{(1, 6, 3, 2)},\, \ColorTwoDPlusOne{(2, 6, 4, 1)},\, (1, 5, 4, 2),\, (1, 6, 4, 2)\}$\\
$(7, 28, 35, 40)$ & $\{(1, 4, 6, 6),\, (1, 9, 2, 6)\}$\\
$(8, 15, 24, 40)$ & $\{\ColorTwoDPlusOne{(1, 7, 2, 2)}\}$\\
$(8, 28, 35, 40)$ & $\{(1, 3, 7, 6)\}$\\
$(10, 15, 24, 40)$ & $\{(1, 7, 4, 1)\}$\\
$(12, 15, 20, 30)$ & $\{\ColorTwoDPlusOne{(3, 1, 2, 1)},\, (4, 1, 2, 1)\}$\\
$(14, 20, 28, 35)$ & $\mleft\{\begin{gathered}
  \ColorTwoDPlusOne{(1, 6, 4, 1)},\, \ColorTwoDPlusOne{(2, 6, 1, 3)},\, \ColorTwoDPlusOne{(3, 6, 3, 1)},\\[-1\baselineskip]
  \ColorTwoDPlusOne{(5, 6, 2, 1)},\, \ColorTwoDPlusOne{(7, 6, 1, 1)}
\end{gathered}\mright\}$\\
$(15, 20, 24, 30)$ & $\{\ColorTwoDPlusOne{(1, 2, 4, 3)},\, \ColorTwoDPlusOne{(1, 5, 4, 1)},\, \ColorTwoDPlusOne{(3, 2, 4, 2)},\, \ColorTwoDPlusOne{(5, 2, 4, 1)}\}$\\
$(15, 20, 24, 40)$ & $\{\ColorTwoDPlusOne{(3, 1, 4, 2)},\, \ColorTwoDPlusOne{(3, 3, 4, 1)}\}$\\
$(15, 24, 30, 40)$ & $\mleft\{\begin{gathered}
  \ColorTwoDPlusOne{(1, 4, 3, 1)},\, \ColorTwoDPlusOne{(3, 4, 2, 1)},\, \ColorTwoDPlusOne{(5, 4, 1, 1)},\, \ColorTwoDPlusTwo{(2, 3, 2, 2)},\\[-1\baselineskip]
  \ColorTwoDPlusTwo{(4, 3, 1, 2)},\, (1, 4, 2, 2),\, (3, 4, 1, 2),\, (1, 3, 3, 2),\\[-1\baselineskip]
  (3, 3, 2, 2),\, (5, 3, 1, 2),\, (2, 4, 2, 2),\, (4, 4, 1, 2),\\[-1\baselineskip]
  (1, 4, 3, 2),\, (3, 4, 2, 2),\, (5, 4, 1, 2)
\end{gathered}\mright\}$\\
\midrule
$(1, 2, 12, 20, 30)$ & $\{\ColorTwoDPlusOne{(1, 1, 4, 2, 1)}\}$\\
$(1, 3, 24, 30, 40)$ & $\mleft\{\begin{aligned}
  & \ColorExactlyTwoD{(1, 1, 4, 2, 2)},\, (1, 2, 4, 2, 2),\\[-1\baselineskip]
  & (1, 1, 3, 3, 2),\, (1, 1, 4, 3, 2)
\end{aligned}\mright\}$\\
$(1, 4, 12, 21, 28)$ & $\{\ColorExactlyTwoD{(1, 1, 6, 3, 1)}\}$\\
$(1, 4, 24, 30, 40)$ & $\{\ColorTwoDPlusOne{(1, 1, 4, 2, 2)},\, (1, 1, 3, 3, 2),\, (1, 1, 4, 3, 2)\}$\\
$(1, 5, 24, 30, 40)$ & $\{\ColorTwoDPlusTwo{(1, 1, 4, 2, 2)}\}$\\
$(1, 6, 24, 30, 40)$ & $\{\ColorTwoDPlusThree{(1, 1, 4, 2, 2)}\}$\\
$(1, 7, 15, 21, 35)$ & $\{\ColorExactlyTwoD{(1, 1, 6, 2, 2)}\}$\\
$(1, 15, 24, 30, 40)$ & $\{\ColorTwoDPlusTwo{(1, 1, 4, 3, 1)},\, \ColorTwoDPlusTwo{(1, 3, 4, 2, 1)},\, \ColorTwoDPlusTwo{(1, 5, 4, 1, 1)}\}$\\
$(2, 3, 12, 20, 30)$ & $\{(1, 1, 4, 2, 1)\}$\\
$(2, 5, 12, 15, 20)$ & $\{\ColorExactlyTwoD{(1, 1, 4, 3, 1)}\}$\\
$(2, 5, 12, 20, 30)$ & $\{(1, 1, 4, 2, 1)\}$\\
$(2, 12, 15, 20, 30)$ & $\{(1, 3, 1, 2, 1),\, (1, 4, 1, 2, 1)\}$\\
$(2, 14, 21, 30, 35)$ & $\{\ColorExactlyTwoD{(1, 1, 4, 6, 4)},\, \ColorExactlyTwoD{(1, 1, 9, 6, 1)}\}$\\
$(3, 4, 15, 24, 40)$ & $\{\ColorExactlyTwoD{(1, 1, 7, 2, 2)}\}$\\
$(3, 4, 24, 30, 40)$ & $\{(1, 1, 4, 2, 2),\, (1, 1, 3, 3, 2),\, (1, 1, 4, 3, 2)\}$\\
$(3, 6, 24, 30, 40)$ & $\{(1, 1, 4, 2, 2)\}$\\
$(4, 5, 24, 30, 40)$ & $\mleft\{\begin{gathered}
  \ColorExactlyTwoD{(1, 2, 4, 3, 1)},\, (1, 1, 4, 2, 2),\, (1, 3, 4, 3, 1),\\[-1\baselineskip]
  (1, 1, 3, 3, 2),\, (1, 1, 4, 3, 2)
\end{gathered}\mright\}$\\
$(4, 10, 15, 24, 40)$ & $\{\ColorExactlyTwoD{(1, 1, 6, 4, 1)},\, (1, 1, 7, 4, 1)\}$\\
$(4, 10, 24, 30, 40)$ & $\{\ColorExactlyTwoD{(1, 1, 4, 3, 1)}\}$\\
$(4, 15, 24, 30, 40)$ & $\mleft\{\begin{gathered}
  \ColorExactlyTwoD{(1, 2, 4, 1, 2)},\, \ColorTwoDPlusOne{(2, 1, 2, 3, 2)},\, \ColorTwoDPlusOne{(2, 3, 2, 2, 2)},\\[-1\baselineskip]
  \ColorTwoDPlusOne{(2, 5, 2, 1, 2)},\, (1, 1, 4, 3, 1),\, (1, 3, 4, 2, 1),\\[-1\baselineskip]
  (1, 5, 4, 1, 1),\, (1, 2, 3, 2, 2),\, (1, 4, 3, 1, 2),\\[-1\baselineskip]
  (1, 1, 4, 2, 2),\, (1, 3, 4, 1, 2),\, (1, 1, 3, 3, 2),\\[-1\baselineskip]
  (1, 3, 3, 2, 2),\, (1, 5, 3, 1, 2),\, (1, 2, 4, 2, 2),\\[-1\baselineskip]
  (1, 4, 4, 1, 2),\, (1, 1, 4, 3, 2),\, (1, 3, 4, 2, 2),\\[-1\baselineskip]
  (1, 5, 4, 1, 2)
\end{gathered}\mright\}$\\
$(5, 6, 24, 30, 40)$ & $\{(1, 1, 4, 2, 2)\}$\\
$(5, 10, 24, 30, 40)$ & $\{\ColorTwoDPlusOne{(1, 1, 4, 3, 1)}\}$\\
$(5, 15, 24, 30, 40)$ & $\{\ColorTwoDPlusOne{(1, 2, 4, 1, 2)}\}$\\
$(6, 15, 24, 30, 40)$ & $\{\ColorTwoDPlusTwo{(1, 2, 4, 1, 2)},\, (1, 1, 4, 2, 2),\, (1, 3, 4, 1, 2)\}$\\
$(8, 15, 24, 30, 40)$ & $\{\ColorTwoDPlusOne{(1, 1, 2, 3, 2)},\, \ColorTwoDPlusOne{(1, 3, 2, 2, 2)},\, \ColorTwoDPlusOne{(1, 5, 2, 1, 2)}\}$\\
$(10, 15, 24, 30, 40)$ & $\{(1, 1, 4, 3, 1),\, (1, 3, 4, 2, 1),\, (1, 5, 4, 1, 1)\}$\\
$(15, 20, 24, 30, 40)$ & $\{\ColorTwoDPlusOne{(1, 1, 4, 1, 2)},\, \ColorTwoDPlusOne{(1, 3, 4, 1, 1)}\}$\\
\midrule
$(1, 3, 10, 24, 30, 40)$ & $\{\ColorExactlyTwoD{(1, 1, 1, 4, 3, 1)}\}$\\
$(1, 4, 10, 24, 30, 40)$ & $\{\ColorTwoDPlusOne{(1, 1, 1, 4, 3, 1)}\}$\\
$(2, 5, 12, 15, 20, 30)$ & $\{\ColorExactlyTwoD{(1, 1, 4, 1, 1, 1)}\}$\\
$(3, 4, 10, 24, 30, 40)$ & $\{(1, 1, 1, 4, 3, 1)\}$\\
$(3, 4, 15, 24, 30, 40)$ & $\mleft\{\begin{gathered}
  \ColorExactlyTwoD{(1, 1, 1, 2, 3, 2)},\, \ColorExactlyTwoD{(1, 1, 3, 2, 2, 2)},\\[-1\baselineskip]
  \ColorExactlyTwoD{(1, 1, 5, 2, 1, 2)}
\end{gathered}\mright\}$\\
$(4, 5, 10, 24, 30, 40)$ & $\{(1, 1, 1, 4, 3, 1)\}$\\
$(4, 10, 15, 24, 30, 40)$ & $\mleft\{\begin{gathered}
  \ColorExactlyTwoD{(1, 1, 2, 4, 2, 1)},\, \ColorExactlyTwoD{(1, 1, 4, 4, 1, 1)},\\[-1\baselineskip]
  (1, 1, 1, 4, 3, 1),\, (1, 1, 3, 4, 2, 1),\\[-1\baselineskip]
  (1, 1, 5, 4, 1, 1)
\end{gathered}\mright\}$\\
\end{longtable}

\subsection*{Acknowledgments}

We would like to thank Simon Brandhorst, Qing Liu and Roy Magen for useful comments.

\providecommand{\bysame}{\leavevmode\hbox to3em{\hrulefill}\thinspace}
\providecommand{\MR}{\relax\ifhmode\unskip\space\fi MR }
\providecommand{\MRhref}[2]{%
  \href{http://www.ams.org/mathscinet-getitem?mr=#1}{#2}
}
\providecommand{\href}[2]{#2}

\vspace{0.5\baselineskip}
\ShowAffiliations{\\[1\baselineskip]}%
\end{document}